\newcommand\bR{\mathbf{R}}
\newcommand\R{\mathbb{R}}
\newcommand\vdiv{\mathop{\mathrm{div}}\nolimits}
\newcommand\bpi{\boldsymbol{\Pi}}
\newcommand\bx{\boldsymbol{x}}
\newcommand\bv{\boldsymbol{v}}
\newcommand{\bc}{\mathfrak{c}}
\newcommand{\bu}{\boldsymbol{u}}
\newcommand{\bn}{\boldsymbol{n}}
\newcommand{\bp}{\boldsymbol{p}}
\newcommand{\bt}{\boldsymbol{t}}
\newcommand{\bw}{\mathfrak{w}}
\newcommand{\vv}{\mathfrak{v}}
\newcommand{\zz}{\mathfrak{z}}
\newcommand{\bg}{\boldsymbol{g}}
\renewcommand{\P}{\mathbb{P}}
\newcommand{\corr}[1]{{#1}}
\DeclareMathOperator{\rot}{rot}
\renewcommand{\div}{\text{div}}
\newtheorem{assu}{Assumption}[section]
\newtheorem{thm}{Theorem}[section]
\newtheorem{rem}{Remark}[section]
\newtheorem{lemm}[thm]{Lemma}
\newtheorem{prop}{Proposition}[section]
\title{Discontinuous in time Virtual Element method for Darcy equations coupled with Multi Species Transport with First Order Reaction Network }
\author{Ruben Caraballo and Franco Dassi\thanks{Department of Mathematics and Applications, University of Milano-Bicocca, Via Cozzi 55, 20125 Milan, Italy (\href{mailto:rubenantonio.caraballodiaz@unimib.it}{rubenantonio.caraballodiaz@unimib.it}, \href{mailto:franco.dassi@unimib.it}{franco.dassi@unimib.it}}}
\date{}
\begin{document}
\maketitle
\begin{abstract}
We study transport phenomena involving chemically reactive species, 
modeled by \corr{advection--diffusion--reaction} systems coupled with flow fields governed by Darcy’s law. 
Both the velocity field and the species concentrations are discretized using the Virtual Element Method,
while time integration is performed through a discontinuous Galerkin scheme. 
This work represents a preliminary study, 
in which we introduce some simplifications of the full model. 
In particular, we assume a concentration-independent viscosity in the Darcy problem, 
constant diffusion tensors in the \corr{advection–-diffusion-–reaction} systems, 
and first-order reaction networks with liquid-phase degradation. 
We derive an abstract error estimate by means of a technique that combines \corr{Gauss--Radau} interpolation with numerical integration.
The theoretical results are supported by numerical experiments that exhibit arbitrary-order accuracy in both space and time.
\end{abstract}

\section{Introduction}

The simulation of transport phenomena involving chemically reactive species 
plays a key role in many environmental and engineering applications, 
such as groundwater contamination, and the modeling of water and air quality.
These processes are governed by advection–diffusion–reaction systems, 
where the advective term is associated with a flow model satisfying a continuity equation, i.e., the conservation of mass.

A wide variety of numerical methods have been proposed to solve \corr{these kinds of problems;
see, e.g.,~\cite{Miloslav2012,Gong2025Positivity} and~\cite{Dawson2004}} for a comprehensive review.
In general, these approaches can be divided into two main families. 
The first ones are the \emph{Streamline Diffusion} (SD) methods,
which rely on a continuous Galerkin formulation~\cite{Brooks:1982:SUP}. 
The second ones include \emph{Discontinuous Galerkin} (DG) methods~\cite{Cockburn:1999:SEO,Houston:2002:DHP}.

When the \emph{exact} velocity field is used, 
both SD and DG methods are known to be stable, accurate, and globally conservative. 
However, when an \emph{approximate} velocity field is \corr{employed,} these schemes may lose two essential properties: 
zeroth-order accuracy and/or global conservation.

The first property refers to the ability to preserve constant solutions. 
In an \corr{advection--diffusion--reaction} problem 
with constant initial, boundary, and source data, 
the exact solution remains constant in time.
A method which is zeroth-order accurate is able to numerically reproduce this behavior.
If it is not, spurious sources or sinks may appear.  
The second property, global conservation, ensures 
that the total mass varies only due to fluxes across the domain boundary.

Interestingly, SD and DG schemes exhibit complementary behavior in this regard. 
SD schemes preserve zeroth-order accuracy, 
but ensure global conservation only under restrictive conditions on the velocity field~\cite{Dawson2004}.
Conversely, DG schemes guarantee global conservation regardless of the velocity approximation, 
but may fail to preserve constant solutions unless additional compatibility conditions are imposed.

In this work, 
we consider the coupling between an \corr{advection–-diffusion--reaction} system and a velocity field obtained from a Darcy problem. 
The Darcy equation is discretized by the \emph{Mixed Virtual Element Method}~\cite{Brezzi:2014:BPM,beirao:2016:MVE}, 
while the \corr{time--dependent} \corr{advection--diffusion--reaction} equation is solved using 
the \emph{Virtual Element Method} (VEM) for spatial discretization~\cite{Brezzi:2014:BPM,Beirao:2013:BPV} 
combined with a \emph{Discontinuous Galerkin} time-stepping scheme~\cite{Ern:2016:DGM}, 
following the approach proposed in~\cite{Beirao:2025:SST}. 
For the time discretization, we adopt a Gauss–Radau quadrature formula with respect to the temporal variable~\cite{Ern:2016:DGM}. 
This choice leads to high--order, fully discrete time--integration schemes well suited for adaptive temporal refinement. 
As a consequence, while the proposed continuous VEM formulation is exact for zeroth-order polynomial solutions, 
unlike DG schemes it does not inherently guarantee 
the global mass conservation of the scalar concentrations.
The main contributions of this paper are:
\begin{itemize}
  \item the proof of existence and uniqueness of the proposed semi-discrete and fully discrete VE–DG schemes \corr{within this kind of problem;}
  \item the derivation of error estimates in suitable discrete norms;
  \item the extension of the analysis to the case of multiple species coupled through a first-order reaction network.
\end{itemize}

The remainder of the paper is organized as follows. 
In Section~\ref{sec:model}, we introduce the governing equations for both the Darcy and the transport problems,
along with the underlying model assumptions and the well-posedness of the continuous formulation. 
In Section~\ref{sec:vem}, we briefly describe the Virtual Element spaces employed for the discretization,
focusing on the projection operators and the properties essential for the subsequent analysis. 
Section~\ref{sec:disc} represents the core of the work;
here, we define the semi-discrete and \corr{fully discrete} formulations, 
establishing their well-posedness and deriving the corresponding error estimates. 
Finally, in Section~\ref{sec:numExe}, we present several numerical experiments to validate the proposed approach, 
including one applicative test case involving the interaction of two species. 

\paragraph{Notation.}
Throughout this article we adopt the standard notation for function spaces 
and operators~\cite{McLean:2000,adams2003sobolev,evans2010partial,showalter1997monotone,lions1969quelques}.
Specifically, we denote by~\(\partial_t\) the derivative with respect to time, 
and by~\(\nabla\) and~\(\Delta\) the spatial gradient and Laplace operator, respectively. 
Given an open and bounded domain~\(\mathcal{D} \subset \mathbb{R}^2\), $p\in [1,\infty]$ and $s\ge 0$, 
we denote by~\(L^p(\mathcal{D})\) the standard Lebesgue space with norm
\[
\|u\|_{0,p,\mathcal{D}} =
\begin{cases}
\left( \displaystyle\int_{\mathcal{D}} |u(\bx)|^p\,d\bx \right)^{1/p}, & 1 \le p < \infty, \\[0.6em]
\operatorname*{ess\,sup}_{\bx \in \mathcal{D}} |u(\bx)|, & p = \infty,
\end{cases}
\]
and by~\(W^{m,p}(\mathcal{D})\) the standard Sobolev space
\[
W^{m,p}(\mathcal{D}) = \{ u \in L^p(\mathcal{D}) : D^\alpha u \in L^p(\mathcal{D}),\ \text{ for all }|\alpha| \le m \},
\]
endowed with the usual seminorm and norm, 
denoted by~\(|\cdot|_{m,p,\mathcal{D}}\) and~\(\|\cdot\|_{m,p,\mathcal{D}}\), respectively.
The space~\(L^2(\mathcal{D})\) is a Hilbert space endowed with the usual inner product \((u,v)_{\mathcal{D}} \).
When~\(p=2\),
we use the shorter notation \(H^s(\mathcal{D}):=W^{s, 2}(\mathcal{D})\).
As before, its seminorm and norm are denoted by~\(|\cdot|_{s,\mathcal{D}}\) 
and~\(\|\cdot\|_{s,\mathcal{D}}\), respectively.
Let~\(H_0^1(\mathcal{D})\) denote the subspace of~\(H^1(\mathcal{D})\) 
consisting of functions with zero trace on~\(\partial\mathcal{D}\). 
We denote by $W^{-m,\frac{p}{p-1}}(\mathcal{D})$ \corr{the dual of the space} $W^{m,p}(\mathcal{D})$ with the usual convention $H^{-m}(\mathcal{D})$ for the case $p=2$.

\corr{Furthermore,} given~\(p\in [1,\infty]\), ~\(s \geq 1\), a Banach space~\((X, \|\cdot\|_X)\), 
and a time interval~\((a,b)\), 
the corresponding Lebesgue-Bochner space is defined by
\[
L^p(a,b;X) = \{ u : (a,b) \to X \text{ strongly measurable},\ \|u\|_{L^p(a,b;X)} < \infty \},
\]
with
\[
\|u\|_{L^p(a,b;X)} =
\begin{cases}
\left( \displaystyle\int_a^b \|u(t)\|_X^p\,dt \right)^{1/p}, & 1 \le p < \infty, \\[0.6em]
\operatorname*{ess\,sup}_{t \in (a,b)} \|u(t)\|_X, & p = \infty,
\end{cases}
\]
and by 
\[
W^{s,p}(a,b;X) = \{ u \in L^p(a,b;X) : \partial_t^j u \in L^p(a,b;X),\text{ for all }1 \le j \le s \},
\]
equipped with the norm
\[
\|u\|_{W^{s,p}(a,b;X)} =
\left( \sum_{j=0}^{k} \| \partial_t^j u \|_{L^p(a,b;X)}^p \right)^{1/p}.
\]
When \(p=2\), we use the Hilbertian notation~\(H^s(a,b;X) := W^{s,2}(a,b;X)\).


\corr{
To facilitate the reader's navigation through the complex structure of the problem, we adopt a consistent typographical convention for the different types of variables involved. Throughout this work, standard italic letters (e.g., $u, w$) denote scalar-valued functions defined in $\mathbb{R}$. Boldface letters (e.g., $\mathbf{u}, \mathbf{n}$) are reserved for vector fields in the physical space $\mathbb{R}^d$. Finally, to distinguish the multi--component nature of certain variables, specifically the presence of $n_c$ species, Fraktur symbols (e.g., $\mathfrak{v}, \mathfrak{w}$) are employed for vectors belonging to the generalized space $\mathbb{R}^{n_c}$. This distinction is systematically maintained to ensure clarity across the different mathematical dimensions of the model.
}

\corr{
Regarding functional spaces, we extend the previously defined notations to the multi--component framework. 
For instance, the space of functions whose components are in $H^1(\Omega)$ for each of the $n_c$ components is denoted by $[H^1(\Omega)]^{n_c}$. 
Similarly, an element $\mathfrak{v} \in [H^1(\Omega)]^{n_c}$ represents the vector $(v_1, \dots, v_{n_c})$, 
where each $v_i \in H^1(\Omega)$. 
}

\corr{Finally, we use the notation \(a\lesssim b\)
to indicate the existence of a positive constant \(C\), 
independent of the mesh size, the element diameters, and the edge lengths,
such that \(a \leq C\,b\). 
Furthermore, we use the notation \(a\approx b\) to indicate that \(a\lesssim b\) and \(b\lesssim a\).
}


\section{Model problem}\label{sec:model}

In this section, we describe the multi-species model problem considered in this article.
In Sections~\ref{sec:darcyMod} and~\ref{sec:tran}, we introduce the strong formulations of the Darcy and transport equations,
respectively.
Then, in Section~\ref{sec:simp}, we analyze in more detail how these two equations are coupled in this class of problems
and describe the simplifications adopted in this work.
Based on these assumptions, we define the weak formulation of the problem in Section~\ref{sec:weakForm}.
Finally, \corr{Section~\ref{sec:teo} is devoted} to establishing the theoretical framework
and to proving the existence and uniqueness of the solution.

\subsection{Darcy equations}\label{sec:darcyMod}

We consider the classical Darcy equation in mixed form that describes the flow of a fluid through a porous medium. 
In this model, the velocity field, $\bu:\Omega \rightarrow \mathbb{R}^2$, 
satisfies the conservation of mass equation of the form
\begin{equation}\label{continuityeq}
\div(\bu) =f,
\end{equation}
where $\bu$ is the velocity field, 
and $f$ is \corr{an external source or sink term}.
Specifically, sources are characterized by positive values of~\(f\),
while sinks correspond to negative ones. 

Darcy's law for the flow of a viscous fluid in a permeable medium is expressed as follows:
\begin{equation}\label{darcyslaw}
\bu=\dfrac{K}{\mu (c_1,...,c_{n_c})}\nabla p, \text{ in }\Omega,
\end{equation}
where $p:\Omega \rightarrow \mathbb{R}$ denotes the pressure,~$K$ is the permeability coefficient, 
and~$\mu$ is the viscosity of the fluid which generally depends on the species concentrations.
The boundary \corr{of~\(\Omega\), denoted by~\(\Gamma\),} is partitioned into two parts $\overline{\Gamma_N}\cup\overline{\Gamma_D}$  
with $\Gamma_N\cap \Gamma_D=\emptyset$.
We impose the following boundary conditions:
\begin{align}\label{presureboundary}
p&=g_D, \quad\text{on }\Gamma_D,\\
\label{velocityboundary}
\bu\cdot \bn&=g_N, \quad \text{on } \Gamma_N, 
\end{align}
here $\bn$ denotes the outward pointing unit normal to $\Gamma$, 
$g_D\in \mathrm{H}_{00}^{1/2}(\Gamma_D)$ and $g_N\in \mathrm{H}_{00}^{-1/2}(\Gamma_N)$ \corr{are the Dirichlet} and Neumann boundary data, respectively.

\subsection{Transport equations}\label{sec:tran}

Given a set of~\(n_c\) species,
whose initial distribution in the domain~\(\Omega\) is a known function~\(c_i^0\) for~\(i=1,\dots,n_c\),
we consider the following transport equations for each species
\begin{equation}
\partial_t {c}_i+\div(\bu\,{c_i}-D_i(\bu)\nabla c_i)=f c_i^{\ast}+R_i(c_1,\dots,c_{n_c})\quad\text{for }i=1,\dots,n_c.
\label{Mtransport}
\end{equation}
Here~$c_i$ denotes the concentration of the \(i\)-th species,
and the symmetric positive semi-definite tensor $D_i(\bu)$ 
represents the diffusion-dispersion of the \(i\)-th species which,
in general, depends on the Darcy flow~\(\bu\).
The function~\(c_i^\ast\) represents the prescribed concentration of the \(i\)-th species at sources or sinks.
Specifically, it is defined according to the sign of the Darcy force term~\(f\):
for sources~\(f>0\),~\(c_i^\ast\) is a given function~\(\tilde{c}_i\),
whereas for sinks~\(f<0\) it coincides with the concentration, i.e., \(c_i^\ast=c_i\).
Finally,~\(R_i\) is the reaction term which may depend on \emph{all} species, including the~\(i\)-th one. 

The boundary~\(\Gamma\) for the transport system is split into two disjoint parts: 
\begin{equation}\label{binoutflow}
\Gamma_{I}:=\{ \bx\in \Gamma:\, \bu\cdot\bn<0\}\quad\text{and}\quad\Gamma_{O}:=\{ \bx\in \Gamma:\, \bu\cdot\bn\geq 0\},
\end{equation}
representing the inflow and the outflow boundary, respectively. 
On these boundaries, we impose the following boundary conditions 
\begin{subequations}
\begin{equation}\label{bcinflow}
(c_i\bu -D_i(\bu)\nabla c_i)\cdot \bn =c_i^I\bu\cdot \bn, \qquad \text{on }\Gamma_{I}\times (0,T],
\end{equation}
\begin{equation}\label{bcoutflow}
\qquad  \qquad D_i(\bu)\,\nabla c_i \cdot \bn=0, \, \qquad \qquad \text{on }\Gamma_{O}\times (0,T].
\end{equation}
\end{subequations}
Here $c_i^I$ is the inflow concentration of the~\(i\)-th species which may depend on both space and time.  

\subsection{Simplified model and assumptions}\label{sec:simp}

This class of problems presents several challenges due to the nonlinear relations, the time dependency and 
strong coupling of these two problems.
Given an initial distribution of species concentrations, 
since the porosity~\(\mu\) generally depends on the species distributions, 
one can compute the Darcy flow at the initial time.
Then, using this velocity field~\(\bu\),
the new concentration distribution can be obtained by solving the nonlinear time-dependent problem~\eqref{Mtransport}.
However, since the species concentrations evolve in time,
the Darcy problem must be solved again to update the velocity field, 
and then the transport problem must be resolved accordingly.
In summary,
to obtain the concentration distribution over time,
one must repeatedly alternate between solving the Darcy problem and the nonlinear time-dependent transport problem until the end of the simulation.

\begin{rem}
A possible strategy would be a monolithic approach, 
that is solving a single coupled system including the Darcy and the transport equations for all species.
However, such an approach is \corr{computationally expensive}, 
since the linear system becomes extremely large and,
more importantly, involves two nonlinearities \(\mu (c_1,...,c_{n_c})\) and \(R_i(c_1,\dots,c_{n_c})\),
which requires a nonlinear iterative scheme, such as fixed-point iterations at \emph{each} time step.
\end{rem}

Dealing with such system, with all these difficulties, 
is extremely demanding both from a theoretical and the \corr{practical viewpoint}. 
For this reason, in this preliminary work we introduce \corr{a series of simplifications}
in order to establish a foundation for future studies,
where some of these complexities will be progressively reintroduced.

\corr{Specifically, the model is analyzed under the assumptions of a multi-species transport system with a first-order reaction network, the particularities of which, relative to the general model, are grounded in references~\cite{Marian2008,Jones2021,SUK2016,Chen2012,QUEZADA2004507}. In accordance with the cited literature, the assumptions governing the proposed model are as follows:}
\begin{enumerate}[(i)]
\item more than one species may be present, i.e.,~\(n_c>1\);
\item the coefficient~\(\mu\) of the Darcy flow is constant throughout the domain~\(\Omega\) and is not species dependent;
\item the diffusion tensors \(D_i(\bu)\) are constant and isotropic, i.e.,~they can be reduced to scalar coefficients~\(D_i\);
\item\label{ass:forn} \corr{the transport of the $n_c$ species undergoes} a first-order reaction network with liquid-phase degradation; 
in other words, the reaction terms take the form
\begin{equation}
R_i(c_1,\dots,c_{n_c}) = -\gamma_i c_i + \sum_{\substack{j=1\\ j\neq i}}^{n_c} y_{i/j}\,\gamma_j\,c_j,
\label{reactionterm}
\end{equation}
where $\gamma_j$ denotes the first-order degradation (or decay) rate constant of the~\(j\)-th species, 
and  $y_{i/j}$ is the effective yield coefficient 
representing the mass of~\(i\)-th species produced from the degradation of~\(j\)-th species.
\end{enumerate}

\begin{rem}
A direct consequence of assumption (\ref{ass:forn}) is 
that the reaction terms \(R_i(c_1,\dots,c_{n_c})\)  can be expressed as a matrix–vector product, 
where the reaction matrix~\(\R\) has entries
\begin{equation}
    R_{ij} = 
    \begin{cases}
        -\gamma_i, & \text{if } i = j, \\[6pt]
        y_{i/j}\,\gamma_j, & \text{if } i \neq j.
    \end{cases}
\end{equation}
\label{eqn:forn}
\end{rem}

\subsection{Weak formulations}\label{sec:weakForm}

In this section, we introduce the weak formulations of both the Darcy and the transport equations
(see Sections~\ref{sec:darcy} and~\ref{sec:tran}).
Let us consider the Darcy problem. 
First, we define the following functional spaces 
\begin{eqnarray*}
\mathbf{H}(\div,\Omega)&:=&\left\{\bv \in [L^2(\Omega)]^2:\, \div\,\bv \in L^2(\Omega)\right\},\\[0.3em]
\mathbf{H}_{N,g_N}(\div,\Omega)&:=&\left\{\bv \in \mathbf{H}(\div,\Omega):\, \bv\cdot \bn =g_N \, \text{on} \, \Gamma_N \right\}, 
\end{eqnarray*}
and
\[
\mathbf{H}_{N,0}(\div,\Omega):=\left\{\bv \in \mathbf{H}(\div,\Omega):\, \bv\cdot \bn =0 \, \text{on} \, \Gamma_D \right\}. 
\]
We equip these spaces with the usual norm defined as 
\[
\Vert \bv \Vert_{\vdiv}^2:=\Vert \bv \Vert_{0,\Omega}^2+\Vert \div\, \bv \Vert_{0,\Omega}^2.
\]
Starting from these functional spaces,
to get the variational formulation of the Darcy problem,
we proceed as usual.
We multiply equations~\eqref{continuityeq} and~\eqref{darcyslaw} 
by suitable test functions.
Then, integrating by parts over~\(\Omega\),
we get the variational form of Darcy's problem:
find $(\bu,p)\in \mathbf{H}_{N,g_N}(\div,\Omega)\times \mathrm{L}^2(\Omega)$, such that 
\begin{align}
\mathcal{M}(\bu,\bv)+ \mathcal{N}(\bv,p)&=\mathcal{G}_N(\bv), 
&\forall\bv\in \mathbf{H}_{N,0}(\div,\Omega), \label{darcyequ}\\
\mathcal{N}(\bu,q)&=\mathcal{G}_D(q),
&\forall q\in  \mathrm{L}^2(\Omega),\label{darcyeqp}
\end{align}
where we have defined the following bilinear and linear forms
\begin{align*}
\mathcal{M}(\bu,\bv)&:=\mu K^{-1}\int_{\Omega} \bu\cdot \bv\, d\bx, \\
\mathcal{N}(\bv,q)&:=\int_{\Omega}q\,\div \bv\, d\bx,\\
\mathcal{G}_N(\bv)&:=\int_{\Gamma_N}g_N(\bv \cdot \bn)\, dS\\ \mathcal{G}_D(q)&:=\int_{\Omega} f\,q\, d\bx.
\end{align*}

We move now to the transport equations.
In this case, we adopt as the functional space for the solution the Hilbert space~\(\mathrm{H}^1(0, T;\mathrm{H}^1(\,\Omega))\).
We now turn our attention to the transport equations. For simplicity, let us focus on the \(i\)-th species.
Multiplying \eqref{Mtransport} by a test function \(w_i\in \mathrm{H}^1(\Omega)\) and integrating by parts over \(\Omega\), 
we obtain

\begin{equation}\label{eq1}
\int_{\Omega} \partial_t \,c_i\, w_i \,d\bx-\int_{\Omega}(\bu c_i-D_i\nabla c_i )\cdot \nabla w_i \, d\bx+\int_{\Gamma}(\bu c_i-D_i\nabla c_i )\cdot\bn\, w_i\, dS
=\int_{\Omega}(f c_i^{\ast}+R_i(\bc))w_i\,d\bx
\end{equation}

Here, we introduce the vector~\(\bc\) which collects all the species 
so that we have a more compact notation of the reaction term~\(R_i(\bc)\) instead 
of~\(R_i(c_1,\dots,c_{n_c})\).

Starting from this variational formulation,
we manipulate these integrals in such a way 
that both boundary conditions and the source term of the Darcy problem appear.
To do this, let~\(\xi,\eta\in \mathrm{H}^1(\Omega)\) 
and~\(\bg\in \mathbf{H}(\div,\Omega)\cap \mathrm{\boldsymbol{L}}^4(\Omega)\cap \gamma_{\bn}^{-1}(\mathrm{L}^2(\Gamma))\),
\corr{exploiting by parts integration},
the following identity holds 
\begin{equation}
\int_{\Omega}\div(\xi\,\bg)\,\eta\,d\bx=-\int_{\Omega}\xi\,(\bg\cdot \nabla \eta)\,d\bx+\int_{\Gamma}\xi\,\eta \,(\bg\cdot \bn)\, dS\,.
\label{eqn:byPart1}
\end{equation}
Then, if we compute the divergence of the left-hand side,
Equation~\eqref{eqn:byPart1} becomes  
\[
\int_{\Omega}(\xi\, \div\,\bg+\bg\cdot \nabla \xi)\,\eta\,d\bx=-\int_{\Omega}\xi\,(\bg\cdot \nabla \eta)\,d\bx+\int_{\Gamma}\xi\,\eta \,(\bg\cdot \bn)\, dS,
\]
\corr{that gives the following identity;} 
\begin{equation}
\int_{\Omega}(\bg\cdot \nabla \xi)\,\eta\,d\bx=\int_{\Omega}\xi\,\eta\, \div\,\bg\,\, d\bx-\int_{\Omega}\xi\,(\bg\cdot \nabla \eta)\,d\bx+\int_{\Gamma}\xi\,\eta \,(\bg\cdot \bn)\, dS.
\end{equation}
Assuming that $\bu$ has sufficient regularity, and setting \(\bg=\bu\), \(\eta=c_i\) and \(\xi=w_i\) in the previous identity, we obtain
\begin{align}\nonumber
\int_{\Omega}c_i\,(\bu\cdot \nabla w_i)\,d\bx&=\frac{1}{2}\left( \int_{\Omega}c_i\,(\bu\cdot \nabla w_i)\,d\bx+\int_{\Omega}c_i\,(\bu\cdot \nabla w_i)\,d\bx \right)\\
&=\frac{1}{2}\left( \int_{\Omega}c_i\,(\bu\cdot \nabla w_i)\,d\bx-\int_{\Omega}w_i\,(\bu\cdot \nabla c_i)\,d\bx-\int_{\Omega}w_i\,c_i\, \div\,\bu\,d\bx+\int_{\Gamma}w_i\,c_i\,(\bu\cdot \bn)\, dS \right). \label{identityf}
\end{align}

Recalling that~\(\div\,\bu=f\),
since~\(\bu\) satisfies~\eqref{continuityeq},
and substituting Equation~\eqref{identityf} into~\eqref{eq1}, 
we obtain
\begin{align*}
   \int_{\Omega}(f c_i^{\ast}+R_i(\bc))w_i\,d\bx&= \int_{\Omega} \partial_t \,c_i\, w_i \,d\bx-\int_{\Omega}(\bu c_i-D_i\nabla c_i )\cdot \nabla w_i \, d\bx+\int_{\Gamma}(\bu c_i-D_i\nabla c_i )\cdot\bn\, w_i\, dS\\   
   &=  \int_{\Omega} (\partial_t \,c_i\, w_i+D_i\nabla c_i \cdot \nabla w_i) \, d\bx+\int_{\Gamma}(\bu c_i-D_i\nabla c_i )\cdot\bn\, w_i\, dS - 
   \int_\Omega c_i(\bu\cdot \nabla w_i)\,d\bx\\
   &=  \int_{\Omega} (\partial_t \,c_i\, w_i+D_i\nabla c_i \cdot \nabla w_i) \, d\bx+\int_{\Gamma}(\bu c_i-D_i\nabla c_i )\cdot\bn\,w_i\, dS\\
   &\qquad -\frac{1}{2}\left( \int_{\Omega}c_i\,(\bu\cdot \nabla w_i)\,d\bx-\int_{\Omega}w_i\,(\bu\cdot \nabla c_i)\,d\bx-\int_{\Omega}f\,w_i\,c_i\,d\bx+\int_{\Gamma}w_i\,c_i \,(\bu\cdot \bn)\, dS \right)\\
   &= \mathcal{A}_i(c_i,\, w_i) + \underbrace{\int_{\Gamma}(\bu c_i-D_i\nabla c_i )\cdot\bn\,w_i\, dS-\frac{1}{2}\int_{\Gamma}c_i \,w_i\,(\bu\cdot \bn)\,dS}_{(a)} + 
   \corr{\underbrace{\frac{1}{2}\int_{\Omega}f\,w_i\,c_i\,d\bx,}_{(b)}}\\
\end{align*}
where, to have a more readable expression,
we have defined the following bilinear form
\begin{equation}
\mathcal{A}_i(c_i,\, w_i):=
\int_{\Omega} (\partial_t \,c_i\, w_i+D_i\nabla c_i \cdot \nabla w_i) \, d\bx + \frac{1}{2}\int_{\Omega}(\bu\cdot \nabla c_i)\,w_i\,d\bx-\frac{1}{2}\int_{\Omega}c_i\,(\bu\cdot \nabla w_i)\,d\bx.
\label{eqn:aForm}
\end{equation}
\corr{In the weak formulation of the problem, the integrals \((a)\) and \((b)\) are modified by 
incorporating the boundary conditions and 
accounting for the behavior of the function~\(c_i^*\).
Regarding the term~\((a)\),
we proceed by applying the inflow and outflow boundary conditions~\eqref{bcinflow}--\eqref{bcoutflow}} 
\begin{align*}
(a)&=\int_{\Gamma_I}(\bu c_i-D_i\nabla c_i )\cdot\bn\,w_i\, dS
+\int_{\Gamma_O}(\bu c_i-D_i\nabla c_i )\cdot\bn\,w_i\, dS-\frac{1}{2}\int_{\Gamma}c_i \,(\bu\cdot \bn)\,w_i\,dS\\
&=\int_{\Gamma_I}\,c_i^{I}(\bu\cdot \bn)\,w_i\,dS+\int_{\Gamma_O}(\bu \cdot \bn)\,c_i\,w_i\,dS -\frac{1}{2}\int_{\Gamma_I}(\bu\cdot \bn)\,c_i\,w_i\, dS
-\frac{1}{2}\int_{\Gamma_O}(\bu\cdot \bn)\,c_i\,w_i\, dS\\
&=\int_{\Gamma_I}\,c_i^{I}(\bu\cdot \bn)\,w_i\,dS \underbrace{-\frac{1}{2}\int_{\Gamma_I}(\bu\cdot \bn)\,c_i\,w_i\, dS
+\frac{1}{2}\int_{\Gamma_O}(\bu\cdot \bn)\,c_i\,w_i\, dS}_{(c)}\\
\end{align*}
According to the sign of~\(\bu\cdot\bn\) on~\(\Gamma_I\) and~\(\Gamma_O\),
\corr{it is possible to rewrite (c) in a more compact form as
\[
 \frac{1}{2}\int_{\Gamma}|\bu\cdot \bn|\,c_i\,w_i\, dS.
\]
Then, since \(c_i^{I}\) is a data, we can move its integral to the right hand side and 
the variational formulation of the problem becomes}
\[
\mathcal{A}_i(c_i,\, w_i) + \mathcal{B}(\bu; c_i,\, w_i) + \frac{1}{2} \int_\Omega f\,w_i\,c_i\,d\bx = \int_{\Omega}(f c_i^{\ast}+R_i(\bc))w_i\,d\bx-\int_{\Gamma_I}\,c_i^{I}(\bu\cdot \bn)\,w_i\,dS,
\]
where we defined the following bilinear form 
\begin{equation}
\mathcal{B}(\bu;c_i,\, w_i) := \frac{1}{2}\int_{\Gamma}|\bu\cdot \bn|\,c_i\,w_i\, dS\,.    
\label{eqn:bForm}
\end{equation}

\corr{Finally, for the behavior of the function~\(c_i^*\),}
let~\(\Omega^+\) and~\(\Omega^-\) denote the subdomains where~\(f\) is positive or negative, respectively.
Recalling the definition of~\(c^\ast_i\), 
we split the integral at the right hand side as follows
\[
\mathcal{A}_i(c_i,\, w_i) + \mathcal{B}(\bu;c_i,\, w_i) + \frac{1}{2} \int_\Omega f\,w_i\,c_i\,d\bx = \int_{\Omega^+} f\,\tilde{c}_i\,w_i\,d\bx + \underbrace{\int_{\Omega^-} f\,c_i\,w_i\,d\bx}_{(d)} 
+ \int_\Omega R_i(\bc)\,w_i\,d\bx - \int_{\Gamma_I}\,c_i^{I}(\bu\cdot \bn)\,w_i\,dS.
\]
We move the integral~\((d)\) at the left hand side
since it depends on both the trial and test functions,~\(c_i\) and~\(w_i\),
and obtain the final variational formulation of the problem
\begin{equation}
\sum_{i=1}^{n_c}\left(\mathcal{A}_i(c_i,\, w_i) + \mathcal{B}(\bu;c_i,\, w_i) + \mathcal{C}_i(\bc,\, w_i) \right)
= \sum_{i=1}^{n_c}\mathcal{F}_i(w_i),\quad\forall w_i\in H^1(\Omega),
\label{eqn:varForm}
\end{equation}
where we have defined the bilinear and linear form 
\begin{align}
\mathcal{C}_i(\bc,\, w_i) &:= \frac{1}{2}\int_\Omega |f|\,c_i\,w_i\,d\bx-\int_\Omega R_i(\bc)\,w_i\,d\bx, \label{eqn:cForm}\\
\mathcal{F}_i(w_i) &:=\int_{\Omega^+} f\,\tilde{c}_i\,w_i\,d\bx -\int_{\Gamma_I}\,c_i^{I}(\bu\cdot \bn)\,w_i\,dS.\label{eqn:fForm}
\end{align}

\subsection{Functional Analysis Framework and Well-posedness}\label{sec:teo}

In this section we introduce the functional framework needed for the analysis of
the Darcy equations coupled with multi-species transport involving a 
first-order reaction network.
We first recall the abstract setting for linear parabolic problems and the
inequalities that will be used throughout the manuscript.
We then apply these tools to establish the well-posedness of the Darcy problem.

\noindent\textit{Abstract setting.} 
We consider the Hilbert spaces $\mathscr{V}$ and $\mathscr{H}$, 
$\mathscr{V}\subseteq \mathscr{H}$, $\mathscr{V}$ dense in $\mathscr{H}$. 
We identify $\mathscr{H}$ with its dual space $\mathscr{H}'$. 
Let $a:\mathscr{V}\times \mathscr{V}\rightarrow \mathbb{R}$ be 
a continuous bilinear form satisfying the following coercivity condition: 
there exist $\alpha>0$ and $\mu \geq 0$ such that 
\[
a(w,w)+ \mu \Vert w \Vert_{\mathscr{H}}^2\geq \alpha \Vert w\Vert_{\mathscr{V}}^2\qquad\forall w\in \mathscr{V}.
\]

\noindent\textit{Variational formulation and abstract existence result.}
The variational formulation of the parabolic problem 
(where $(\cdot,\cdot)$ denotes the scalar product in $\mathscr{H}$) 
is given by the following.

\noindent Let $T>0$, $f:(0,T)\rightarrow \mathscr{V}'$, and $c_0\in \mathscr{H}$.
For almost every $t\in (0,T)$ find $c(t)\in \mathscr{V}$ such that 
\begin{equation}\label{abstractpe}
\partial_t (c(t),w)+a(c(t),w)=_{\mathscr{V}'}\langle f(t),w\rangle_{\mathscr{V}}, \text{  }\forall w\in \mathscr{V};  \text{  }c(0)=c_0.
\end{equation}
The following existence and uniqueness result for problem~\eqref{abstractpe} is well known (see, e.g.,~\cite{lions2012non}).
\begin{thm}\label{Funtional}
Assume that the bilinear form $a$ is continuous and coercive on~$\mathscr{V}\times \mathscr{V}$. Then, given~$f\in L^2((0,T);\mathscr{V}')$ and~$c_0\in \mathscr{H}$, there exist a unique solution~$c\in L^2((0,T);\mathscr{V})\cap C^0([0,T];\mathscr{H})$ to~\eqref{abstractpe}, with~$\partial_t c \in L^2((0,T);\mathscr{V}')$.
Moreover, the following energy estimate holds true:
\begin{equation}\label{energype}
    \max_{t\in [0,T]}\Vert c(t)\Vert_{\mathscr{H}}^2+\alpha \int_{0}^T \Vert c\Vert_{\mathscr{V}}^2\, dt \leq \Vert c_0\Vert_{\mathscr{H}}^2+C\int_{0}^T \Vert f\Vert_{\mathscr{V}'}^2\, dt.
\end{equation}
\end{thm}

\noindent\textit{Auxiliary inequalities.}
The inequalities below will be used in several sections of the manuscript. 
Proofs of these results can be found in~\cite{adams2003sobolev,brezis2010functional,tartar2007introduction,evans2010partial}.

\begin{lemm}
Let $S\subset \mathbb{R}^d$, $\epsilon \in (0,\infty)$, $1\leq r,s\leq \infty$ such that $\frac{1}{r}+\frac{1}{s}=1$, for $\phi \in L^r(S)$ and $\varphi \in L^s(S)$, the inequality below is valid
\begin{equation}\label{holderyungiq}
    \left|\int_{S} \phi \varphi \,d\bx \right|\leq \Vert \phi \Vert_{L^r(S)}\Vert \varphi \Vert_{L^s(S)} \leq \frac{\epsilon}{r}\Vert \phi \Vert_{L^r(S)}^r+\frac{\epsilon^{-s/r}}{s}\Vert \varphi \Vert_{L^s(S)}
\end{equation}
\end{lemm}

\begin{lemm}\label{lem:embedingLs} Let $S\subset \mathbb{R}^d$ be an open bounded Lipschitz subset. Then, there exists a constant $C_{s,r}>0$ such that, for every $\phi \in W^{1,r}(S)$, the following inequality holds
\begin{equation}\label{embedingLs}
\Vert \phi \Vert_{L^s(S)}\leq C_{s,r} \Vert \phi \Vert_{W^{1,r}(S)}
\end{equation}
for all $1\leq s < r^{\ast}$, where $r^{\ast}=\frac{dr}{d-r}$ for $r<d$ and $r^{\ast}=\infty$ for $p=d$.
\end{lemm}

\begin{lemm}\label{boundaryembeding} Let $S\subset \mathbb{R}^d$ be an open bounded Lipschitz subset.
Assume that $1\leq l <d$, $1\leq r < \frac{d}{l}$, and $r\leq s\leq \frac{(d-1)r}{d-lr}$. Then, there exists a constant $C_{s,l,r}^{\partial S}>0$ such that
\begin{equation}\label{boundarylq}
    \Vert \phi \Vert_{0,s,\partial S}\leq C_{s,l,r}^{\partial S} \Vert \phi \Vert_{l,r, S}, \quad \forall \phi \in W^{l,r}(S).
\end{equation}
\end{lemm}

\noindent\textit{Well-posedness of the Darcy problem.}
For the mixed formulation \eqref{darcyequ}--\eqref{darcyeqp}, the following result,
adapted from~\cite{ern2021finite}, ensures the well-posedness of the problem.

\begin{prop}
Assume that $f\in \mathrm{L}^2(\Omega)$, $g_D\in \mathrm{H}_{00}^{1/2}(\Gamma_D)$ and $g_N\in \mathrm{H}_{00}^{-1/2}(\Gamma_N)$. Then the problem \eqref{darcyequ}-\eqref{darcyeqp} is well-posed.
\end{prop}

\noindent\textit{Well-posedness of the transport problem.} For the well-posedness of equations \eqref{eqn:varForm}, we assume the velocity $\bu$ in the equations \eqref{darcyequ}-\eqref{darcyeqp} possesses a normal trace in the space $\mathrm{L}^2(\Gamma)$ and satisfies
\begin{equation}\label{velocitycond}
\int_{\Gamma}|\bu\cdot \bn|^{1/2}\,d\bx \geq c_0>0.
\end{equation}

The preceding assumption allows us to establish the following equivalence of the norms.
\begin{lemm}\label{ineq:normeqc}
We define the norm $\Vert \cdot \Vert_{d}$ by
\[
\Vert c \Vert_{d}^2:= \Vert |f|^{1/2}c\Vert_{0,\Omega}^2+|c|_{1,\Omega}^2+\Vert |\bu\cdot \bn|^{1/2}c\Vert_{0,\Gamma}^2.
\]
Under this definition, the norms $\Vert\cdot \Vert_{1,\Omega}$ and $\Vert \cdot \Vert_{d}$ \corr{are equivalent} in $\mathrm{H}^1(\Omega)$.
\end{lemm}
\begin{proof}
Let $G:\mathrm{H}^{1}(\Omega)\rightarrow \R$ be given by 
\[
G(w):=\int_{\Gamma}|\bu\cdot \bn|^{1/2}w\, dS.
\]
Then $G$ is clearly a linear operator. Moreover, we have $G(w)\leq \Vert \bu \cdot \bn \Vert_{0,1,\Gamma}^{1/2}\Vert c\Vert_{0,\Gamma}\leq C \Vert \bu \cdot \bn \Vert_{0,1,\Gamma}^{1/2}\Vert c\Vert_{1,\Omega} $ and $G(1)=\int_{\Gamma}|\bu\cdot \bn|^{1/2}\,d\bx \geq c_0$, where $C$ is a trace constant only depending on $\Gamma$. \corr{Then, by the generalized Poincar\'e's inequality from~\cite{gatica2024introduccion}}, there exist constants $c_1,c_2>0$ such that, for all $c\in \mathrm{H}^{1}(\Omega)$,  
\begin{equation}\label{eqnorm0}
    c_1 \Vert c\Vert_{1,\Omega}^2 \leq (G(c))^2+|c|_{1,\Omega}^2 \leq c_2 \Vert c\Vert_{1,\Omega}^2.
\end{equation}
Then, applying H\"older's inequality to $G(c)$, we obtain
\begin{equation}\label{eqnorm1}
\Vert c\Vert_{1,\Omega}^2\leq c_1^{-1}\left[ (G(c))^2+|c|_{1,\Omega}^2 \right] \leq c_1^{-1}\left[ |\Gamma|\Vert |\bu\cdot \bn|^{1/2}c\Vert_{0,\Gamma}^2+|c|_{1,\Omega}^2 \right]\leq c_1^{-1}\max\{1,|\Gamma|\}\Vert c\Vert_{d}^2.    
\end{equation}

Applying H\"ollder's inequality to $\Vert |f|^{1/2}c\Vert_{0,\Omega}$ and using inequality~\eqref{embedingLs}, we obtain 
\corr{
\begin{equation}\label{eqfH}
\Vert |f|^{1/2}c\Vert_{0,\Omega}^2\leq \Vert f\Vert_{0,1,\Omega} \Vert c\Vert_{0,4,\Omega}^2\leq C_{4,2}^2 \Vert f\Vert_{0,1,\Omega}\Vert c\Vert_{1,\Omega}^2.
\end{equation}
}
Using H\"ollder's inequality to $\Vert |\bu\cdot \bn|^{1/2}c\Vert_{0,\Gamma}$ and using inequality~\eqref{boundarylq}, we obtain for the following two cases:
\corr{
\begin{itemize} 
    \item For $d=2$, we set $s=4$, $r=8/5$ and $l=1$. In this case, the trace operator maps $W^{1,8/5}(\Omega)$ continuously into $L^4(\Gamma)$ since the conditions
    \[
    l=1<2=d, \quad 1<\frac{8}{5}=r<2=\frac{d}{l} \quad \text{and} \quad r=\frac{8}{5}<4=s=\frac{(2-1)\frac{8}{5}}{2-\frac{8}{5}}=\frac{(d-1)r}{d-lr}.
    \]
    \item For $d=3$, we set $s=4$, $r=2$ and $l=1$. In this case, the trace operator maps $H^{1}(\Omega)$ continuously into $L^4(\Gamma)$ since the conditions
    \[
    l=1<3=d, \quad 1<2=r<3=\frac{d}{l} \quad \text{and} \quad r=2<4=s=\frac{(3-1)2}{3-2}=\frac{(d-1)r}{d-lr}.
    \] 
\end{itemize}
From the two previous cases and H\"ollder's inequality, the following inequality follows:
    \begin{equation}\label{equnH}
\Vert |\bu\cdot \bn|^{1/2}c\Vert_{0,\Gamma}^2\leq \Vert \bu\cdot \bn\Vert_{0,1,\Gamma} \Vert c\Vert_{0,4,\Gamma}^2\leq \Vert \bu\cdot \bn \Vert_{0,1,\Gamma}\left( C_{4,1,r}^{\Gamma}\right)^2 \Vert c\Vert_{1,r,\Omega}^2\leq \left( C_{4,1,r}^{\Gamma}\right)^2 |\Omega|^{\frac{2-r}{r}}\Vert \bu\cdot \bn \Vert_{0,1,\Gamma} \Vert c\Vert_{1,\Omega}^2.
\end{equation}
}

Then, as a consequence of~\eqref{eqfH} and~\eqref{equnH}, we obtain the following inequality:
\corr{
\begin{equation}\label{eqnorm2}
\Vert c\Vert_{d}^2\leq \left(1+ C_{4,2}^2\Vert f\Vert_{0,1,\Omega}+\left( C_{4,1,r}^{\Gamma}\right)^2 |\Omega|^{\frac{2-r}{r}}\Vert \bu\cdot \bn \Vert_{0,1,\Gamma}\right) \Vert c\Vert_{1,\Omega}^2.
\end{equation}
}
The result of the proof is an immediate consequence of inequalities \eqref{eqnorm1} and \eqref{eqnorm2}.
\end{proof}

In what follows, we shall employ the norm $\Vert \cdot \Vert_{\bu}$ in the space $[\mathrm{H}^1(\Omega)]^{n_c}$, defined as 
\[
\Vert \bw  \Vert_{\bu}^2:= \sum_{j=1}^{n_c}\Vert w_j\Vert_{d},
\]
where $\bw:=(w_1,\dots,w_{n_c})$.
Henceforth, vectors in $\R^{n_c}$ will be denoted using Fraktur letters.
By Lemma \ref{ineq:normeqc}, this norm is equivalent to the standard product norm in $[\mathrm{H}^1(\Omega)]^{n_c}$.

\begin{prop} Let $(\bu, p)$ be the solution to the problem \eqref{darcyequ}-\eqref{darcyeqp}, and assume that \( \bu\in \boldsymbol{\mathrm{L}}^{4}(\Omega)\) and that it satisfies~\eqref{velocitycond}. Then, there exists a unique solution \( \bc:=(c_1,\dots,c_{n_c})\in [\mathrm{H}^1(\Omega)]^{n_c}\) to the problem~\eqref{eqn:varForm}.  
\end{prop}
\begin{proof}
For the purposes of the proof, we rewrite~\eqref{eqn:varForm} using the following vector notation:
\begin{equation}\label{revarf}
(\partial_t \bc,\bw)_{\Omega}+\mathcal{D}(\bc,\bw)=\mathcal{H}(\bw),
\end{equation}
\corr{where we have defined the operators $\mathcal{D}$ and $\mathcal{H}$.
Specifically, given the generic test functions \( \vv:=(v_1,\dots,v_{n_c})\) and \(\bw:=(w_1,\cdots,w_{n_c})\) in \([\mathrm{H}^1(\Omega)]^{n_c}\),
the operator $\mathcal{D}$ is defined as
\[
\mathcal{D}(\vv,\bw):=\sum_{i=1}^{n_c}\left(\mathcal{A}_i^{\ast}(v_i,\, w_i) + \mathcal{B}(\bu;v_i,\, w_i) + \mathcal{C}_i(\vv,\, w_i) \right),
\]
where
\[
\mathcal{A}_i^{\ast}(v_i,\, w_i):=\int_{\Omega} D_i\nabla v_i \cdot \nabla w_i \, d\bx + \frac{1}{2}\int_{\Omega}(\bu\cdot \nabla v_i)\,w_i\,d\bx-\frac{1}{2}\int_{\Omega}v_i\,(\bu\cdot \nabla w_i)\,d\bx,
\]
while the other operator is given by
\[
\mathcal{H}(\bw):=\sum_{i=1}^{n_c}\mathcal{F}_i(w_i).
\]}

To establish the well-posedness of the problem, we shall invoke \corr{Theorem \ref{Funtional}}.
For this reason, we partition the proof into the following steps:
\paragraph{Continuity of \(\mathcal{D}\).}
Since 
\begin{align*}
    \int_{\Omega}(\bu\cdot\nabla v)w\,d\bx&\leq \Vert \bu\Vert_{0,4,\Omega}|v|_{1,\Omega}\Vert w\Vert_{0,4,\Omega}\leq C_{4,2}\Vert \bu\Vert_{0,4,\Omega}|v|_{1,\Omega}\Vert w\Vert_{1,\Omega}\\
    &\leq C_{4,2} c_1^{-1/2}\max\{1,|\Gamma|\}^{1/2}\Vert \bu\Vert_{0,4,\Omega}\Vert v\Vert_{d}\Vert w\Vert_{d},
\end{align*}
and 
\[
\int_{\Omega}D_i \nabla v_i\cdot \nabla w_i\,d\bx \leq D_i\Vert v_i\Vert_{d}\Vert w_i\Vert_{d}, 
\]
we may conclude that
\begin{equation}\label{Aast}
\mathcal{A}_i^{\ast}(v_i,\, w_i)\leq \alpha_i \Vert v_i\Vert_{d}\Vert w_i\Vert_{d},
\end{equation}
where $\alpha_i:=D_i+C_{4,2}c_1^{-1/2}\max\{1,|\Gamma|\}^{1/2}$.
One readily verifies that 
\begin{equation}\label{BF}
\mathcal{B}(\bu;v_i,\, w_i) \leq \dfrac{1}{2}\Vert|\bu \cdot \bn|^{1/2} v_i \Vert_{0,\Gamma} \Vert|\bu \cdot \bn|^{1/2} w_i\Vert_{0,\Gamma} \quad \text{and} \quad \int_{\Omega}|f|v_i\,w_i\,d\bx\leq \Vert|f|^{1/2} v_i \Vert_{0,\Omega} \Vert|f|^{1/2} w_i\Vert_{0,\Omega}. 
\end{equation}
\corr{On the other hand}, 
\corr{
\begin{equation}\label{Ri}
\int_{\Omega}R_i(\vv)w_i\,d\bx\leq \gamma_i \Vert v_i\Vert_{0,\Omega}\Vert w_i \Vert_{0,\Omega}+\omega\sum_{\substack{j=1\\ j\neq i}}^{n_c}  \Vert v_j\Vert_{0,\Omega}\Vert w_i \Vert_{0,\Omega}\leq \beta_i\Vert \vv\Vert_{\bu}\Vert w_i\Vert_{d}, 
\end{equation}
}
where \(\beta_i:=(\gamma_i+(n_c-1)^{1/2}\omega)c_1^{-1/2}\max\{1,|\Gamma|\}^{1/2}\) and \(\omega:=\max_{i,j=1,\dots,n_c}\{y_{i/j}\gamma_j\}\).
Combining~\eqref{BF} and~\eqref{Ri}, we deduce that
\begin{equation}\label{C}
\corr{\mathcal{C}_i(\vv,\, w_i)\leq (1+\beta_i) \Vert \vv\Vert_{\bu}\Vert w_i\Vert_{d}.}
\end{equation}
By combining~\eqref{Aast}, \eqref{BF}, and~\eqref{C}, 
it follows that \(\mathcal{D}\) defines a continuous bilinear \corr{form in the space $[\mathrm{H}^1(\Omega)]^{n_c}$}.

\paragraph{Continuity of \(\mathcal{H}\).}
Employing the same reasoning, we conclude that
\[
\mathcal{F}_i(w_i)\leq \left[\Vert |f|^{1/2}\tilde{c}_i\Vert_{0,\Omega^{+}}+\Vert |\bu\cdot\bn|^{1/2}c_i^{I}\Vert_{0,\Gamma_{I}}\right] \Vert w_i\Vert_{d},
\]
and therefore \(\mathcal{H}\) defines a continuous linear form on the space $[\mathrm{H}^1(\Omega)]^{n_c}$.

\paragraph{Coercivity of \(\mathcal{D}\).}   
It can be immediately verified by direct evaluation that
\[
\mathcal{D}(\vv,\vv)\geq \alpha \Vert \vv\Vert_{\bu}- \sum_{i=1}^{n_c}\int_\Omega R_i(\vv)\,v_i\,d\bx,
\]
where \(\alpha:=\min\left\{\frac{1}{2},D_1,\dots,D_{n_c}\right\}\).
Indeed, using the Cauchy–Schwarz inequality, we deduce that:
\begin{equation}\label{Ric}
\sum_{i=1}^{n_c}\int_\Omega R_i(\vv)\,v_i\,d\bx\leq \mu \Vert \vv\Vert_{0,\Omega}^2,
\end{equation}
where \(\mu:=\gamma+n_c^{1/2}(n_c-1)^{1/2}\omega\), \(\omega:=\max_{i,j=1,\dots,n_c}\{y_{i/j}\gamma_j\}\) and \(\gamma:=\max_{i=1,\dots,n_c}\{\gamma_i\}\).
Consequently, we obtain
\[
\mathcal{D}(\vv,\vv)+\mu \Vert \vv\Vert_{0,\Omega}^2 \geq \alpha \Vert \vv\Vert_{\bu}.
\]

Gathering the previous steps and applying \corr{Theorem~\ref{Funtional}}, \corr{we conclude the proof}.
\end{proof}
\begin{rem}\label{dependeceondata}
Additionally, Theorem \ref{abstractpe} allows us to obtain the following estimate:
\[
\max_{t\in (0,T]}\Vert \bc(t)\Vert_{0,\Omega}^2+\alpha \int_{0}^T\Vert \bc\Vert_{\bu}^2\, dt\leq \Vert \bc_0\Vert_{0,\Omega}^2+C\sum_{i=1}^{n_c}\int_{0}^{T}\left[\Vert |f|^{1/2}\tilde{c}_i\Vert_{0,\Omega^{+}}+\Vert |\bu\cdot\bn|^{1/2}c_i^{I}\Vert_{0,\Gamma_{I}}\right]^2\, dt.
\]
\end{rem}


\section{Virtual element method}\label{sec:vem}

In this section, we introduce the VE spaces used to discretize both the Darcy equations, Section~\ref{sec:darcy}, 
and the species transport equations, Section~\ref{sec:spec}.
The theory underlying these spaces is well established in the literature. 
In the following subsections, we focus on describing the local and global spaces 
as well as the projection operators required for the definition of the VEM.
We refer to~\cite{beirao:2016:VEM} for the Darcy flow discretization, 
and to~\cite{beirao:2016:MVE} for the species transport discretization.
Finally, in Section~\ref{sec:res}, we recall some results 
which will be used in the theoretical analysis.

\subsection{Space-time Discretization}

Given a polygon~\(K\), we denote by~\(h_K\) and~\(|K|\) its diameter and area, respectively.
Let~\(\partial K\) be the boundary of~\(K\) and
let~\(e\) be a generic edge of~\(\partial K\).
We denote by~\(\mathbf{n}_K^e\) the unit outward normal to \(K\)
and by~\(h_e\) the length of the edge~\(e\).

We consider a spatial mesh~\(\Omega_h\) composed by polygonal elements,
with mesh size defined as~\(h:=\max_{K\in\Omega_h} h_K\).
Let~\(\mathcal{E}_h(\Omega_h)\) be the set of all edges of the polygons.
We denote the boundary edges by~\(\mathcal{E}_h\), the outflow edges by~\(\mathcal{E}_h^{\mathcal{O}}\),
and the set of inflow edges by~\(\mathcal{E}_h^{\mathcal{I}}\).

\begin{assu}[Space-mesh regularity]
For a given a mesh~\(\Omega_h\), 
we assume that each element~\(K\in\Omega_h\) satisfies:
\begin{enumerate}[i)]
\item \(K\) is star-shaped with respect to a ball \(B_K\) of radius less or equal to \(\rho h_K\),
\item the distance between any two vertexes if \(K\) is greater or equal to  \(\rho h_K\),
\item \(\Omega_h\) is regular, that is, \(h_K\) greater or equal to \(\rho_0 h\),
\end{enumerate}
where \(\rho\) and~\(\rho_0\) are positive constants.
\label{ass:meshH}
\end{assu}

For the time discretization, 
let~\(\mathcal{T}_\tau\) be a uniform partition of the time interval~\((0, T)\) into~\(N\) subintervals, 
given by
\[
0 = t_0 < t_1 < \dots < t_N = T,
\]
where~\(t_i-t_{i-1}=\tau\) for all~\(i = 1, 2,\dots N\).
We denote by~\(I_n:=(t_{n-1}, t_n)\) the~\(n\)-th interval of this time discretization.

A key aspect in the construction of the VEM are local polynomial projection operators.
They play a fundamental role both in the definition of the discrete VE spaces and, 
more importantly,  in the assembly of the linear system.

Given a mesh element \(K\) and a positive integer \(k\), 
we define the following polynomial projection operators:
\begin{itemize}
\item the \(\nabla\)-\emph{projection}, \(\Pi^\nabla_k:H^1(K)\to\mathbb{P}_k(K)\) defined for all \(v\in H^1(K)\)
\[
\left\{
\begin{array}{rrl}
\mathlarger{\int_K} \nabla \Pi_k^\nabla v\cdot\nabla q_k~\text{d}K &=& \mathlarger{\int_K} \nabla v\cdot\nabla q_k~\text{d}K\qquad \forall q_k\in\mathbb{P}_k(K),\\[1em]
\mathlarger{\int_{\partial K}} \Pi_k^\nabla v~\text{d}e &=& \mathlarger{\int_{\partial K}} v~\text{d}e,   
\end{array}
\right.
\]
\item the \(L^2\)-\emph{scalar projection}, \(\Pi^{0}_k:L^2(K)\to\mathbb{P}_k(K)\) defined for all \(v\in L^2(K)\)
\[
\int_K \Pi_k^0 v\, q_k~\text{d}K = \int_K v\, q_k~\text{d}K\qquad \forall q_k\in\mathbb{P}_k(K)\,,
\]
\item the \(L^2\)-\emph{vector projection},
\(\bpi^0_k:[L^2(K)]^2\to[\mathbb{P}_k(K)]^2\) defined for all \(\bv\in [L^2(K)]^2\)
\[
\int_K \bpi_k^0 \bv\cdot\mathbf{q}_k~\text{d}K = \int_K \bv\, \mathbf{q}_k~\text{d}K\qquad \forall \mathbf{q}_k\in[\mathbb{P}_k(K)]^2\,.
\]
\end{itemize}

\subsection{Spaces for Darcy equations}\label{sec:darcy}

In this section, we briefly introduce the local and global spaces used in the VEM disretization of the Darcy problem.
Let \(k\) be a positive integer.
For any element \(K\in\Omega_h\), we define the following space
\begin{equation*}
\mathbf{U}_h^k(K) := \left\{\bv_h\in H(\div,\Omega)\cap H(\rot,\Omega) : \div(\bv_h)\in\P_k(K),\ 
\rot(\bv_h)\in\P_{k-1}(K),\ \bv_h\cdot\bn_K^e\in\P_k(e)\ \forall e\in\partial K\right\}\,.
\label{eqn:darcyFlow}
\end{equation*}
It is worth noting that \(\mathbf{U}_h^k(K)\) is a typical example of virtual element space.
Indeed, it contains polynomial vector fields of degree \(k\) 
as well as non-polynomial functions.
However, all these functions can be uniquely determined by a suitable set of degrees of freedom.
Among the possible choices, in this work we consider the following:
\begin{itemize}
\item edge moments of the vector normal components
\[
\frac{1}{|e|}\int_e \bv_h\cdot\bn_e\,q_k\,\text{d}e\qquad\forall q_k\in\mathbb{P}_k(e);
\]
\item face moments associated with the divergence of the vector field \(\bv_h\)
\[
\frac{h_K}{|K|}\int_K \div(\bv_h)\,p_k\,\text{d}K\qquad\forall p_k\in\mathbb{P}_k(K)\backslash\mathbb{P}_0(K);
\]
\item moments of the vector field 
\[
\frac{1}{|K|}\int_K \bv_h\cdot \bp_k\,\text{d}K\qquad\forall \bp_k\in\mathcal{G}_k^\perp(K),
\]
where \(\mathcal{G}_k^\perp(K)\) is the \(L^2\) orthogonal space of \(\nabla\mathbb{P}_{k+1}(K)\) in 
\([\mathbb{P}_k(K)]^2\).
\end{itemize}
The global velocity space is then defined by assembling the local spaces across all elements:
\begin{equation*}
\mathbf{U}_h^k(\Omega) := \left\{\bv_h\in H(\div,\Omega) : \bv_h|_K\in\mathbf{U}_h^k(K)\ \forall K\in\Omega_h\right\}.
\end{equation*}
From the properties of the local spaces \(\mathbf{U}_h^k(K)\), 
it follows that the global space \(\mathbf{U}_h^k(\Omega)\) consists of vector fields in \(H(\div,\Omega)\) 
whose normal component is continuous across internal mesh edges.

For the pressure, the virtual element space is not required. 
As in the standard finite element setting, we consider the polynomial space
\begin{equation*}
Q_h^k(\Omega) := \left\{q_h\in L^2(\Omega) : p_h|_K\in\P_{k}(K)\ \forall K\in\Omega_h\right\}.
\end{equation*}

Among the possible choices for the degrees of freedom of \(Q_h^k(\Omega)\), 
in this work we simply use the coefficients of the polynomials.

\subsection{Spaces for transport equations}\label{sec:spec}

For the transport equations,
we employ the enhanced VE spaces introduced in~\cite{beirao:2016:VEM}.  
Given a positive integer~\(k\),
and a polygon~\(K\in\Omega_h\),
we define the local space
\begin{equation}
\begin{array}{lrl}
V_h^k(K):=\Bigg\{v_h\in H^1(K)\cap C^0(\partial K)&:&\Delta v_h\in\mathbb{P}_k(K),\ v_h|_e\in\mathbb{P}_k(e)\ \forall e\in\partial K,\\[-0.7em]
&&\mathlarger{\int_K} \Pi_k^\nabla v_h\,p_k~\text{d}K= \mathlarger{\int_K} v_h\,p_k~\text{d}K\ \forall p_k\in\mathbb{P}_k(K)\backslash\mathbb{P}_{k-2}(K) \Bigg\}.
\nonumber
\end{array}
\end{equation}
Notice that, differently from standard VE spaces~\cite{Beirao:2013:BPV}, 
the Laplacian of the virtual function belongs to \(\mathbb{P}_k(K)\).
This is the key feature of the enhanced VE spaces, 
as it allows the exact construction of an \(L^2\)-projection onto the polynomial space of degree \(k\).

Similarly as the space \(\mathbf{U}_h^k(K)\),
\(V_h^k(K)\) contains polynomials of degree \(k\) as well as virtual functions,
which are uniquely determined by a set of degrees of freedom.
A possible set of degrees of freedom for \(V_h^k(K)\) is
\begin{itemize}
\item vertex values, the values of the virtual function \(v_h\) at each element vertex;
\item internal node values, values of the virtual function at \(k-1\) points along each edge 
(in this work we use the~\(k-1\) internal Gauss-Lobatto points associated 
with a quadrature that exactly integrate polynomials of degree \(2k-3\));
\item internal moments, 
\[
\frac{1}{|K|}\int_K v_h\,p_k\,\text{d}K\qquad\forall p_k\in\mathbb{P}_{k-2}(K).
\]
\end{itemize}
The global space is obtained by gluing the local spaces along the mesh edges with \(C^0\) continuity:
\begin{equation*}
V_h^k(\Omega) := \left\{v_h\in H^1(\Omega) : v_h|_K\in V_k^h(K)\ \forall K\in\Omega_h\right\}.
\end{equation*}

\subsection{Auxiliary results on projection operators}\label{sec:res}

In this subsection we collect several technical results concerning polynomial
and virtual element projection operators. 
These estimates will be used throughout the error analysis.

\corr{The following lemma is essential for certain parts of the analysis and corresponds to}
Lemma~4.5.3 of~\cite{ern2021finite}.

\begin{lemm}[Inverse inequality] \label{inverseineq}
Let~\(\rho_0\) be the parameter introduced in Assumption~\ref{ass:meshH},
$1 \le q \le \infty$ and $0 \le m \le l$. 
Then there exists 
$C := C(m,l, p, q, \rho_0)$ 
such that for all $v_h \in \mathbb{P}_{k}(K).\cap W_p^l(K) \cap W_q^m(K)$, we have
\begin{equation}
\label{eq:4.5.4}
\|v_h\|_{W_p^l(K)} \le C\, h_K^{\,m - l + n/p - n/q} \|v_h\|_{W_q^m(K)}.
\end{equation}
\end{lemm}

\corr{ The following inequality for Virtual Element spaces can be found in \cite{Beirao:2025:SST}.
\begin{lemm}[Inverse inequality VEM]\label{inverseineqV}
Let $K \in \Omega_h$ be a spatial element with diameter $h_K$. There exists a positive constant $C$, independent of the local mesh size $h_K$, such that the following inverse estimate holds for all functions in the discrete space $V_{h}^k(K)$:
$$\|\nabla v\|_{0,K} \le C h_K^{-1} \|v\|_{0,K}.$$    
\end{lemm}
}

The following lemma is a local version of the standard trace inequality, which can be found in~\cite{ern2021finite}.
\begin{lemm}[Trace inequality]\label{traceineq}
Let \(p\in[1,\infty)\). Then, for all \(K\in \mathcal{T}_h\) and all \(v\in \mathrm{W}^{1,p}(K)\), there exist a positive constant $C_{TR}$ independent of $h_K$ such that 
\begin{equation}\label{traceineqq}
\Vert v\Vert_{\mathrm{L}^p(\partial K)}\leq C_{TR}\left(h^{-\frac{1}{p}}\Vert v\Vert_{\mathrm{L}^{p}(K)}+h^{1-\frac{1}{p}}\Vert v\Vert_{\mathrm{W}^{1,p}(K)}\right).
\end{equation}
\end{lemm}

\corr{
The following lemma establishes a fundamental estimate required for our subsequent analysis. While related results have been discussed in the literature (see, e.g., \cite{L2Crouzeeix1978, L2Diening2021}), we provide an alternative proof specifically adapted to the hypotheses of our current framework.
}
\begin{lemm}\label{lem:L2PLp0}
Let $1< r\leq \infty$ and $n\in \mathbb{N}$. Then for any $w\in L^r(\Omega)\cap L^2(\Omega)$ we have
\begin{equation}\label{L2PLp}
\Vert \Pi^0_k w \Vert_{L^r(K)}\lesssim \Vert w \Vert_{L^r(K)}.
\end{equation}
\end{lemm}
\begin{proof}
We treat the two ranges of $r$ separately.
\begin{itemize}

\item {\bf Case $r\in[2,\infty)$:}

From~\eqref{eq:4.5.4} and the continuity of the $L^2$-projection, we obtain the following local estimate
\[
\Vert \Pi^0_k w \Vert_{L^r(K)}\lesssim h_{K}^{2\left(\frac{1}{p}-\frac{1}{2}\right)}\Vert \Pi^0_k w \Vert_{L^2(K)} \lesssim h_{K}^{2\left(\frac{1}{p}-\frac{1}{2}\right)}\Vert  w \Vert_{L^2(K)}.
\]
Apply H\"older's inequality on the previous inequality and using $|K|\approx h_K^2$ we have
\[
\Vert \Pi^0_k w \Vert_{L^r(K)}\lesssim h_{K}^{2\left(\frac{1}{p}-\frac{1}{2}\right)}|K|^{\frac{1}{2}-\frac{1}{p}}\Vert  w \Vert_{L^r(K)}\lesssim h_{K}^{2\left(\frac{1}{p}-\frac{1}{2}\right)}h_K^{2\left(\frac{1}{2}-\frac{1}{p}\right)}\Vert  w \Vert_{L^r(K)}=\Vert  w \Vert_{L^r(K)}.
\]
\item  {\bf Case $r\in(1,2]$:}

For this case, we choose $r':=\frac{r}{r-1}\geq 2$. Using the previous case, H\"older's inequality and the dual definition of the $L^r$-norm, we obtain
\begin{align*}
\Vert \Pi^0_k w \Vert_{L^r(K)}&:=\sup_{v\in L^{r'}(K)}\frac{\mathlarger{\int}_{K} \Pi^0_k w\, v~\text{d}K}{\Vert v\Vert_{L^{r'}(K)}} =\sup_{v\in L^{r'}(K)}\frac{\mathlarger{\int}_{K}  w\, \Pi^0_k v~\text{d}K}{\Vert v\Vert_{L^{r'}(K)}}\\
&\lesssim \sup_{v\in L^{r'}(K)}\frac{\Vert w\Vert_{L^{r}(K)} \Vert\Pi^0_k v\Vert_{L^{r'}(K)}}{\Vert v\Vert_{L^{r'}(K)}}\\
&\lesssim \sup_{v\in L^{r'}(K)}\frac{\Vert w\Vert_{L^{r}(K)} \Vert v\Vert_{L^{r'}(K)}}{\Vert v\Vert_{L^{r'}(K)}} \\
&\lesssim \Vert  w \Vert_{L^r(K)}.
\end{align*}
\end{itemize} 
Since both cases yield, the theorem follows.
\end{proof}

The following approximation result for polynomials, taken from
\cite{Beirao:2013:BPV}, will also be needed during the error analysis of the problem here proposed.

\begin{lemm}\label{polyproyect}
Let $K\in \mathcal{T}_h$ and $v\in W^{s,p}(K)$, where $1\leq s\leq k+1$. Then, there exists a $v_{\pi}\in \mathbb{P}_k(K)$ such that
$$
\Vert v-v_{\pi} \Vert_{0,p,K}+h_{K}|v-v_{\pi}|_{1,p,K}\lesssim h_K^s |v|_{s,p,K}.
$$
\end{lemm}

We conclude with an approximation property of the virtual element space
from~\cite{Beirao:2013:BPV}.

\begin{lemm}\label{vemproyect}
Let $K\in \mathcal{T}_h$ and $v\in H^s(K)$, where $2\leq s\leq k+1$. Then, there exists a $\mathcal{P}_h v\in V_k(K)$ such that 
$$
\Vert v-\mathcal{P}_h v \Vert_{0,2,K}+h_{K}|v-\mathcal{P}_h v|_{1,2,K}\lesssim h_K^s |v|_{s,2,K}.
$$
\end{lemm}

\section{The discrete problem}\label{sec:disc}
In the following, we derive the discrete scheme associated with our problem. 
To this end, we introduce the corresponding discrete bilinear and linear forms, 
establish the existence and uniqueness of the discrete solutions, 
and finally provide the convergence orders for the error estimates in each case.

We now introduce the discrete bilinear forms employed in the method. 
Since much of the analysis is conducted on a generic element $K$, 
we denote by 
$\mathcal{M}^{K}(\cdot,\cdot)$, 
$\mathcal{N}^{K}(\cdot,\cdot)$, 
$\mathcal{A}_i^{K}(\cdot,\cdot)$, 
$\mathcal{B}^{K}(\cdot,\cdot)$, 
$\mathcal{C}_i^{K}(\cdot,\cdot)$, 
$\mathcal{G}_N^{K}(\cdot)$, 
$\mathcal{G}_D^{K}(\cdot)$, and $\mathcal{F}_i^{K}(\cdot)$ 
the restriction of the corresponding forms to $K$. 
Furthermore, 
let $S^{K}(\cdot,\cdot)$, $S_{\mathcal{M}}^{K}(\cdot,\cdot)$, and $S_{\mathcal{A}_i}^{K}(\cdot,\cdot)$ 
be symmetric bilinear forms that scale as $(\cdot,\cdot)_{0,K}$, $\mathcal{M}^K$, and $\mathcal{A}_i^K$ on the kernels of $\Pi_k^{0}$, $\boldsymbol{\Pi}_k^{0}$, and $\Pi_k^{\nabla}$, respectively; 
these will serve as stabilization terms. 
Specifically, we define the following discrete bilinear and linear forms:

\begin{equation}\label{stabilitation}
\begin{aligned}
(c_h,c_h)_{0,K} &\approx S^{K}(c_h,c_h), && \forall c_h \in \mathrm{V}_h^k(K) \quad \text{with } \Pi_k^{0}c_h=0, \\
\mathcal{M}^{K}(\bv_h,\bv_h) &\approx S_{\mathcal{M}}^{K}(\bv_h,\bv_h), && \forall \bv_h \in \boldsymbol{\mathrm{U}}_h^k(K) \quad \text{with } \boldsymbol{\Pi}_k^{0}\bv_h=0, \\
\mathcal{A}_i^{K}(c_h,c_h) &\approx S_{\mathcal{A}_i}^{K}(c_h,c_h), && \forall c_h \in \mathrm{V}_h^k(K) \quad \text{with } \Pi_k^{\nabla}c_h=0.
\end{aligned}
\end{equation}

We refer the reader to~\cite{Beirao:2013:BPV,beirao:2016:VEM} for a more detailed construction of these stabilization terms. 
In the numerical experiments of Section~\ref{sec:numExe}, 
we employ the standard \texttt{dofi-dofi} stabilization for each $S^{K}_*$, 
where the subscript $*$ stands for any of the labels introduced above.

We now have all the ingredients to define the local bilinear forms 
as well as the local loading terms on each element~$K$ to establish the discrete formulation of the problem:
\begin{align*}
m_h^{K}(c_h, w_h) &:= (\Pi_k^{0}c_h, \Pi_k^{0}w_h)_{0,K} + S^{K}(c_h - \Pi_k^{0}c_h, w_h - \Pi_k^{0}w_h), \\
\mathcal{M}_h^{K}(\bu_h, \bv_h) &:= \mathcal{M}^{K}(\boldsymbol{\Pi}_k^{0}\bu_h, \boldsymbol{\Pi}_k^{0}\bv_h) + S_{\mathcal{M}}^{K}(\bu_h - \boldsymbol{\Pi}_k^{0}\bu_h, \bv_h - \boldsymbol{\Pi}_k^{0}\bv_h), \\
\mathcal{A}_{i,h}^{K}(\phi_h, \psi_h) &:= D_i(\nabla \Pi_k^{\nabla}\phi_h, \nabla \Pi_k^{\nabla}\psi_h)_{0,K} + S_{\mathcal{A}_{i}}^{K}(\phi_h - \Pi_k^{\nabla}\phi_h, \psi_h - \Pi_k^{\nabla}\psi_h), \\
\mathcal{K}_h^{K}(\bu_h; \phi_h, \psi_h) &:= (\boldsymbol{\Pi}_k^{0}\bu_h \cdot \boldsymbol{\Pi}_k^{0}\nabla \phi_h, \Pi_k^{0}\psi_h)_{0,K}, \\
\mathcal{K}_h^{\ast,K}(\bu_h; \phi_h, \psi_h) &:= \dfrac{1}{2}\mathcal{K}_h^{K}(\bu_h; \phi_h, \psi_h) - \dfrac{1}{2}\mathcal{K}_h^{K}(\bu_h; \psi_h, \phi_h), \\
\mathcal{N}_h^{K}(\bv_h, q_h) &:= \mathcal{N}^{K}(\bv_h, q_h), \\
\mathcal{C}_{i,h}^{K}(\Phi, \psi) &:= \mathcal{C}_{i}^{K}(\boldsymbol{\Pi}_k^{0}\Phi, \Pi_k^{0}\psi), \\
\mathcal{B}_h^{e}(\bu_h; \phi_h, \psi_h) &:= \frac{1}{2}(|\bu_h|\phi_h, \psi_h)_{0,e}, \\
\mathcal{G}_{h,N}^{K}(\bv_h) &:= \mathcal{G}_{N}^{K}(\bv_h), \\
\mathcal{G}_{h,D}^{K}(q_h) &:= \mathcal{G}_{D}^{K}(q_h), \\
\mathcal{F}_{i,h}^{K}(\phi_h) &:= (\max\{0,f\} \tilde{c}_i, \Pi_k^{0}\phi_h)_{0,K}, \\
\mathcal{F}_{i,h}^{e}(\bu_h; \phi_h) &:= -(\min\{0, \bu_h \cdot \bn_e\} \tilde{c}_i^{I}, \phi_h)_{0,e}.
\end{align*}

Finally, the global forms, denoted by omitting the superscript $K$ or $e$, are obtained by assembling the local contributions over the mesh $\mathcal{T}_h$. For instance, we define
\begin{equation}
m_h(\cdot, \cdot) := \sum_{K \in \mathcal{T}_h} m_h^K(\cdot, \cdot), \quad \mathcal{A}_{i,h}(\cdot, \cdot) := \sum_{K \in \mathcal{T}_h} \mathcal{A}_{i,h}^K(\cdot, \cdot),
\end{equation}
with an analogous summation convention for all other bilinear forms.
\corr{
However, since the data of the problem may involve either the entire domain $\Omega$ or the boundary $\Gamma$, 
we define the global linear forms as the sum of their local contributions:
\begin{equation}
\mathcal{F}_{i,h}^{\Omega}(\phi_h) := \sum_{K \in \mathcal{T}_h}\mathcal{F}_{i,h}^{K}(\phi_h)\quad\text{and}\quad 
\mathcal{F}_{i,h}^{\Gamma}(\phi_h) := \sum_{e \in \Gamma} \mathcal{F}_{i,h}^{e} (\bu_h; \phi_h).
\end{equation}}

The remainder of this section is organized as follows.
First, we address the discretization of the Darcy~equations. 
Second, we focus on the transport equations, 
for which the analysis is further split into two levels: 
the semi-discrete and the fully discrete formulations.
\corr{
\begin{rem}
It is worth noting that we have applied stabilization to both the diffusive and the mass terms. 
Depending on the physical regime of the transport equation, 
one of these two stabilization terms can be omitted~\cite{Beirao:2014:THG}. 
For instance, if the diffusion coefficient $D_i$ is very small, 
we can neglect the stabilization term associated with $\mathcal{A}_{i,h}^{K}(\phi_h, \psi_h)$.
To ensure greater code flexibility and avoid the use of heuristic thresholds for $D_i$, 
we consistently employ both stabilizations.
\end{rem}}

\subsection{Discrete formulation for Darcy equations}

Based on the continuous variational formulation and the bilinear and linear forms introduced before, 
we define the following discrete variational formulation for the Darcy problem:
find $(\bu_h,p_h)\in \mathbf{\mathrm{U}}_h^k(\Omega)\times Q_h^k(\Omega)$, such that 
\begin{align}
\mathcal{M}_h(\bu_h,\bv_h)+ \mathcal{N}_h(\bv_h,p_h)&=\mathcal{G}_{h,N}^{\Gamma}(\bv_h), 
&\forall\bv_h\in \mathbf{\mathrm{U}}_h^k(\Omega), \label{darcyequsemi}\\
\mathcal{N}_h(\bu_h,q_h)&=\mathcal{G}_{h,D}^{\Gamma}(q_h),
&\forall q_h\in  Q_h^k(\Omega).\label{darcyeqpsemi}
\end{align}
From~\cite{Beirao:2013:BPV,beirao:2016:VEM}, 
we report two fundamental results concerning the well--posedness of the discrete problem 
and the corresponding error estimates.

\begin{thm}
Problem \eqref{darcyequsemi}--\eqref{darcyeqpsemi} has a unique solution \((\bu_h,p_h)\in \mathbf{\mathrm{U}}_h^k(\Omega)\times Q_h^k(\Omega)\), verifying the estimate
\[
\Vert \bu_h\Vert_{0,\Omega}+\Vert \mathrm{div}\, \bu_h\Vert_{0,\Omega}+\Vert p_h\Vert_{0,\Omega}\lesssim \Vert g_N\Vert_{\mathrm{H}_{00}^{-1/2}(\Gamma_N)}+\Vert g_D\Vert_{\mathrm{H}_{00}^{1/2}(\Gamma_D)}.
\]
\end{thm}

\begin{thm}
Let \((\bu,p)\in \mathbf{H}_{N,g_N}(\mathrm{div},\Omega)\times \mathrm{L}^2(\Omega) \) be the solution of the problem \eqref{darcyequ}-\eqref{darcyeqp} and \((\bu_h,p_h)\in \mathbf{\mathrm{U}}_h^k(\Omega)\times Q_h^k(\Omega)\) be the solution of problem \eqref{darcyequsemi}-\eqref{darcyeqpsemi}.Then it holds 
\begin{equation}\label{errorDarcy}
\Vert \bu-\bu_h\Vert_{0,\Omega}+h\Vert\mathrm{div}\, (\bu-\bu_h)\Vert_{0,\Omega}\lesssim h^{k+1}|\bu|_{k+1,\Omega} \quad \text{and}\quad \Vert p-p_h\Vert_{0,\Omega}\lesssim h^{k+1}(|\bu|_{k+1,\Omega}+|p|_{k,\Omega}).    
\end{equation}
\end{thm}

\subsection{Semi-discrete formulation for the transport equations}

Similarly, based on the continuous analogues, we define the following semi-discrete variational formulation for the transport problem: 
find $\bc_h := (c_{1,h}, \dots, c_{n_c,h}) \in \mathrm{H}^1(0,T; V_{h}^{k}(\Omega)^{n_c})$ such that, for all $t \in (0,T]$ and \corr{for all $w_{i,h}\in V_h^k(\Omega)$,} it holds:
\begin{equation}
\label{eqn:varFormsemi}
\begin{aligned}
\sum_{i=1}^{n_c} \left[ m_h(\partial_t c_{i,h}, w_{i,h}) + \mathcal{D}_{i,h}(\bu_h; \bc_h, w_{i,h}) \right] &= \sum_{i=1}^{n_c} \left[ \mathcal{F}_{i,h}^{\Omega}(w_{i,h}) + \mathcal{F}_{i,h}^{\Gamma}(w_{i,h}) \right], \\
c_{i,h}(0) &= \Pi_{k}^{0}c_{i}^0, s
\end{aligned}
\end{equation}
where the operator $\mathcal{D}_{i,h}$ is defined as:
\begin{equation*}
\mathcal{D}_{i,h}(\bu_h; \bc_h, w_{i,h}) := \mathcal{A}_{i,h}(c_{i,h}, w_{i,h}) + \mathcal{K}_h^{\ast}(\bu_h; c_{i,h}, w_{i,h}) + \mathcal{B}_h(\bu_h; c_{i,h}, w_{i,h}) + \mathcal{C}_{i,h}(\bc_h, w_{i,h}).
\end{equation*}
We observe that, although not explicitly indicated in the notation, 
the boundary source term $\mathcal{F}_{i,h}^{\Gamma}(\cdot)$ also 
depends on the velocity field $\bu_h$ through the inflow boundary conditions, 
as defined in the local contributions. 
For the sake of simplicity in the notation, 
we will omit this explicit dependence in the remainder of this section.
\corr{Moreover, with a slight abuse of notation, 
we denote the species--wise mass term as $m_h(\bc,\bw)$ 
to provide a more compact representation, 
omitting the explicit summation for brevity.}

\subsubsection{Existence, uniqueness and error estimates}

Before addressing the well-posedness and the convergence of the semi--discrete formulation for the transport equations,
we introduce two lemmas that play a key role in the subsequent analysis.

\begin{lemm}\label{eqnormd} Let $\bu$ and $\bu_h$ denote the velocity solutions of problems \eqref{darcyequ}-\eqref{darcyeqp} and \eqref{darcyequsemi}-\eqref{darcyeqpsemi}, respectively. Assume that \(\bu\in \mathbf{\mathrm{H}}^{1+s}(\Omega)\), for some \(s>0\), and define the mesh-dependent norm
\[
\Vert\psi_h \Vert_{h,d}^2:= \sum_{K\in \Omega_h}(|\psi_h|_{1,K}^2+\Vert |f|^{1/2}\Pi_{k}^0\psi_h \Vert_{0,K}^2)+\sum_{e\in \mathcal{E}_h}\Vert |\bu_h\cdot \bn|^{1/2}\psi_h\Vert_{0,e}^2.
\]
Then, there exists $h_0 > 0$ such that for all $ h \leq h_0$ such that the norms \(\Vert\cdot \Vert_{h,d}\) and \(\Vert\cdot \Vert_{1,\Omega}\) are \corr{equivalent} in $V_h^{k}(\Omega)$.
\end{lemm}
\begin{proof}
\corr{
Let $\Vert \cdot \Vert_{\ast}$ given by 
\[
\Vert\psi_h \Vert_{\ast}^2:= \sum_{K\in \Omega_h}|\psi_h|_{1,K}^2+\sum_{e\in \mathcal{E}_h}\Vert |\bu_h\cdot \bn|^{1/2}\psi_h\Vert_{0,e}^2.
\]
By Lemma \ref{ineq:normeqc} we have $\Vert\cdot \Vert_{1,\Omega}^2\approx |\cdot|_{1,\Omega}^2+\Vert |\bu\cdot \bn|^{1/2}\cdot \Vert_{0,\Gamma}^2$. Furthermore, note that 
\[
\left| \Vert \phi_h\Vert_{\ast}^2- |\phi_h|_{1,\Omega}^2+\Vert |\bu\cdot \bn|^{1/2}\phi_h \Vert_{0,\Gamma}^2\right|=\left|(|\bu\cdot\bn|\phi_h,\phi_h)_{0,\Gamma}-\sum_{e\in \mathcal{E}_h}(|\bu_h\cdot\bn|\phi_h,\phi_h)_{0,e}\right|.
\]
We now estimate $\mathrm{I}_{\Gamma}:=\left|(|\bu\cdot\bn|\phi_h,\phi_h)_{0,\Gamma}-\sum_{e\in \mathcal{E}_h}(|\bu_h\cdot\bn|\phi_h,\phi_h)_{0,e}\right|$:
}
\corr{
\begin{equation}\label{boundaryidentity}
\mathrm{I}_{\Gamma}\lesssim \sum_{e\in \mathcal{E}_h}|(|\bu\cdot\bn|-|\bu_h\cdot\bn|\phi_h,\phi_h)_{0,e}|\lesssim \sum_{e\in \mathcal{E}_h}\Vert (\bu-\bu_h)\cdot\bn\Vert_{0,e} \Vert\phi_h\Vert_{0,4,e}^2.
\end{equation}
Employing~\eqref{traceineqq} and Cauchy–Schwarz inequality, we obtain that
\begin{equation}\label{boundaryinequality}
\sum_{e\in \mathcal{E}_h} \Vert (\bu-\bu_h)\cdot\bn\Vert_{0,e}\Vert\phi_h\Vert_{0,4,e}^2 \lesssim  \left[ \left(\sum_{e\in \mathcal{E}_h} h_{K_e}^{-1}\Vert \bu-\bu_h\Vert_{0,K_e}^2\right)^{\frac{1}{2}}+\left( \sum_{e\in \mathcal{E}_h}h_{K_e}|\bu-\bu_h|_{1,K_e}^2\right)^{\frac{1}{2}}\right]\left( \sum_{e\in \mathcal{E}_h} \Vert\phi_h\Vert_{0,4,e}^4\right)^{\frac{1}{2}},
\end{equation}
}
\corr{
Using~\eqref{errorDarcy}, we may conclude that 
\begin{equation}\label{boundaryinequality2}
    \sum_{e\in \mathcal{E}_h} \Vert (\bu-\bu_h)\cdot\bn\Vert_{0,e}\Vert\phi_h\Vert_{0,4,e}^2\lesssim  \left[ h^{s+\frac{1}{2}}|\bu|_{1+s,\Omega}+\left( \sum_{e\in \mathcal{E}_h}h_{K_e}|\bu-\bu_h|_{1,K_e}^2\right)^{\frac{1}{2}}\right]\left( \sum_{e\in \mathcal{E}_h} \Vert\phi_h\Vert_{0,4,e}^4\right)^{\frac{1}{2}}
\end{equation}
}

\corr{
where \(K_e\in \mathcal{T}_h\) is such that \(e\subset \partial K_e\).
Using triangular inequality together with Lemmas~\ref{polyproyect} and~\ref{inverseineqV} , we obtain that
\[
|\bu-\bu_h|_{1,K_e}\lesssim |\bu-\bu_{\pi}|_{1,K_e}+|\bu_{\pi}-\bu_h|_{1,K_e}\lesssim h_{K_e}^{s}|\bu|_{1+s,K_e}+h_{K_e}^{-1}\Vert\bu_{\pi}-\bu_h\Vert_{0,K_e}.
\]
The previous inequality, together with~\eqref{boundaryidentity}, ~\eqref{boundaryinequality}, ~\eqref{boundaryinequality2} and~\eqref{errorDarcy}, allows us to deduce that
\begin{equation*}
\mathrm{I}_{\Gamma}\lesssim h^{s+1/2}|\bu|_{1+s,\Omega}\left( \sum_{e\in \mathcal{E}_h} \Vert\phi_h\Vert_{0,4,e}^4\right)^{\frac{1}{2}} =h^{s+1/2}|\bu|_{1+s,\Omega}\Vert\phi_h\Vert_{0,4,\Gamma}^2.
\end{equation*}
Then, applying Lemma~\ref{boundaryembeding} as in inequality~\eqref{equnH} in the term \(\Vert \phi_h\Vert_{0,4,e}\)  to this inequality, we obtain 
\begin{equation}\label{estimtestar}
  \mathrm{I}_{\Gamma}\lesssim h^{s+\frac{1}{2}}|\bu|_{1+s,\Omega}\Vert\phi_h\Vert_{1,\Omega}^2 
\end{equation}
Inequalities~\eqref{estimtestar}, \eqref{eqnorm0} and \eqref{eqnorm2} implies that
\[
\Vert \phi_h \Vert_{1,\Omega}^2 \lesssim |\phi_h|_{1,\Omega}^2+\Vert |\bu\cdot\bn|\phi_h\Vert_{0,\Gamma}^2 \lesssim \Vert \phi_h\Vert_{\ast}^2 +\mathrm{I}_{\Gamma}\lesssim \Vert \phi_h\Vert_{\ast}^2+ h^{s}|\bu|_{1+s,\Omega}\Vert\phi_h\Vert_{1,\Omega}^2
\]
and
\[
\Vert \phi_h \Vert_{\ast}^2 \lesssim |\phi_h|_{1,\Omega}^2+\Vert |\bu\cdot\bn|\phi_h\Vert_{0,\Gamma}^2+\mathrm{I}_{\Gamma} \lesssim \Vert \phi_h\Vert_{1,\Omega}^2 +\mathrm{I}_{\Gamma}\lesssim (1+ h^{s}|\bu|_{1+s,\Omega})\Vert\phi_h\Vert_{1,\Omega}^2.
\]
Hence, we obtain
\[
(1-h^{s}|\bu|_{1+s,\Omega})\Vert \phi_h \Vert_{1,\Omega}^2 \lesssim \Vert \phi_h \Vert_{\ast}^2 \lesssim (1+h^{s}|\bu|_{1+s,\Omega})\Vert \phi_h \Vert_{1,\Omega}^2.
\]
Thus, for a sufficiently small $h > 0$, it holds that 
\begin{equation}\label{hd1}
    \Vert \phi_h \Vert_{1,\Omega}^2 \approx \Vert \phi_h \Vert_{\ast}^2.
\end{equation}
}

\corr{
On the other hand, by applying the vector Cauchy-Schwarz inequality to $|f|$ and $(\Pi_{k}^0\psi_h)^2$, and invoking Lemma \ref{lem:L2PLp0}, we obtain
\[
\sum_{K\in \Omega_h}\Vert |f|^{1/2}\Pi_{k}^0\psi_h \Vert_{0,K}^2:= \sum_{K\in \Omega_h}\int_{K} |f|(\Pi_{k}^0\psi_h)^2  \leq  \sum_{K\in \Omega_h} \Vert f\Vert_{0,K} \Vert \Pi_{k}^0\psi_h \Vert_{0,4,K}^2 \lesssim \sum_{K\in \Omega_h} \Vert f\Vert_{0,K} \Vert \psi_h \Vert_{0,4,K}^2.
\]
Applying the vector Cauchy-Schwarz inequality to $\sum_{K\in \Omega_h} \Vert f\Vert_{0,K} \Vert \psi_h \Vert_{0,4,K}^2$, we have that 
\[
\sum_{K\in \Omega_h}\Vert |f|^{1/2}\Pi_{k}^0\psi_h \Vert_{0,K}^2 \lesssim \left( \sum_{K\in \Omega_h} \Vert f\Vert_{0,K}^2\right)^{1/2}\left( \sum_{K\in \Omega_h} \Vert \psi_h \Vert_{0,4,K}^4\right)^{1/2}= \Vert f\Vert_{0,\Omega}\Vert \psi_h \Vert_{0,4,\Omega}^2.
\]
From the previous inequality and by applying Lemma \ref{lem:embedingLs}, we can deduce that
\begin{equation}\label{hd2}
\sum_{K\in \Omega_h}\Vert |f|^{1/2}\Pi_{k}^0\psi_h \Vert_{0,K}^2 \lesssim \Vert f\Vert_{0,\Omega}\Vert \psi_h \Vert_{1,\Omega}^2
\end{equation}
}

\corr{
Finally, \eqref{hd1} and \eqref{hd2} allow us to conclude the desired equivalence of the norms.
}
\end{proof} 

To state the following result, we employ the norm $\Vert \cdot \Vert_{h}$ 
in the space $[V_h^k(\Omega)]^{n_c}$, defined for any vector $\vv_h:= (v_{1,h}, \dots, v_{n_c,h})$ as
\begin{equation}
\Vert \vv_h \Vert_{h}^2 := \sum_{j=1}^{n_c} \Vert v_{j,h} \Vert_{h,d}^2,
\end{equation}
in order to have a more compact notation that takes into account the presence of multiple species.

\begin{lemm}\label{PropertiesBF} Let $\mathcal{F}_{h}^{\Omega}:V_h^k(\Omega)^{n_c}\rightarrow \R$, $\mathcal{F}_{h}^{\Gamma}:V_h^k(\Omega)^{n_c}\rightarrow \R$ and $\mathcal{D}_h:V_h^k(\Omega)^{n_c}\times V_h^k(\Omega)^{n_c}\rightarrow \R$ be given by 
\[
    \corr{\mathcal{F}_{h}^{\Omega}(\bw_h):=\sum_{i=1}^{n_c}\mathcal{F}_{i,h}^{\Omega}(w_{i,h})}, \quad \corr{\mathcal{F}_{h}^{\Gamma}(\bw_h):=\sum_{i=1}^{n_c}\mathcal{F}_{i,h}^{\Gamma}(w_{i,h})}\quad \text{and}\quad
    \mathcal{D}_{h}(\bc_h,\bw_h):=\sum_{i=1}^{n_c}\mathcal{D}_{i}(\bu_h;\bc_h,w_{i,h}),
\]
where $\bc_h:=(c_{1,h},\dots,c_{n_c,h})$ and $\bw_h:=(w_{1,h},\dots,w_{n_c,h})$.

Then, the operators $\mathcal{F}_{i,h}^{\Omega}$ and $\mathcal{F}_{i,h}^{\Gamma}$ are continuous in the VEM space $V_h^k(\Omega)$ with respect to the norm $\Vert\cdot \Vert_{h}$. Furthermore, the form $\mathcal{D}_h$ satisfies the conditions of continuity and coercivity.
\end{lemm}
\begin{proof}
For simplicity, we denote $\bc_h:=(c_{1,h},\dots,c_{n_c,h})$ and $\bw_h:=(w_{1,h},\dots,w_{n_c,h})$.

The definitions of $\mathcal{F}_{i,h}^{\Omega}$ and $\mathcal{F}_{i,h}^{\Gamma}$ immediately imply that
\begin{equation*}
\mathcal{F}_{i,h}^{\Omega}(w_{i,h})\lesssim \Vert w_{i,h}\Vert_{h,d}\, \text{and}\, \mathcal{F}_{i,h}^{\Gamma}(w_{i,h})\lesssim \Vert w_{i,h}\Vert_{h,d},\, \forall \bw_h \in V_h^k(\Omega)^{n_c},
\end{equation*}
and hence
\begin{equation*}
\mathcal{F}_{h}^{\Omega}(\bw_{h})\lesssim \Vert w_{h}\Vert_{h}\, \text{and}\, \mathcal{F}_{h}^{\Gamma}(w_{h})\lesssim \Vert w_{h}\Vert_{h},\, \forall \bw_h \in V_h^k(\Omega)^{n_c}.
\end{equation*}
Regarding the continuity and coercivity of the form $\mathcal{D}_h$,
we observe that 
\begin{equation*}
|(\boldsymbol{\Pi}_k^{0}\bu_h\cdot \boldsymbol{\Pi}_k^{0}\nabla c_{i,h},\Pi_k^{0} w_{i,h})_{0,K}|\leq \Vert\boldsymbol{\Pi}_k^{0}\bu_h \Vert_{0,4,K} \Vert\boldsymbol{\Pi}_k^{0}\nabla c_{i,h} \Vert_{0,K} \Vert\Pi_k^{0} w_{i,h} \Vert_{0,4,K}.
\end{equation*}
Then, using \eqref{L2PLp}, H\"older's inequality the embedding $H^1\hookrightarrow L^4$ in \eqref{embedingLs}, we get 
\begin{align*}
    |\mathcal{K}_h(\bu_h;c_{i,h},w_{i,h})|&\lesssim \sum_{K\in \mathcal{T}_h} \Vert\bu_h \Vert_{0,4,K} \Vert\nabla c_{i,h} \Vert_{0,K} \Vert w_{i,h} \Vert_{0,4,K} \lesssim \Vert\bu_h \Vert_{0,4,\Omega} \Vert\nabla c_{i,h} \Vert_{0,\Omega} \Vert w_{i,h} \Vert_{0,4,\Omega}\\
    &\lesssim \Vert\bu_h \Vert_{0,4,\Omega} \Vert\nabla c_{i,h} \Vert_{0,\Omega} \Vert w_{i,h} \Vert_{1,\Omega}\lesssim \Vert\bu_h \Vert_{0,4,\Omega} \Vert c_{i,h} \Vert_{h,d} \Vert w_{i,h} \Vert_{h,d}.
\end{align*}
This last inequality implies that
\begin{equation*}
|\mathcal{K}_h^{\ast}(\bu_h;c_{i,h},w_{i,h})|\lesssim \Vert\bu_h \Vert_{0,4,\Omega} \Vert c_{i,h} \Vert_{h,d} \Vert w_{i,h} \Vert_{h,d}.
\end{equation*}
It is easy to deduce from the definitions of $\mathcal{A}_{i,h}$, $\mathcal{B}$ that they are continuous on $V_h^k(\Omega)\times V_h^k(\Omega)$ and that $\mathcal{C}_{i,h}$ is continuous on $V_h^k(\Omega)^{n_c}\times V_h^k(\Omega)$.
The continuity of $\mathcal{D}_h$ follows from the respective continuities of $\mathcal{A}_{i,h}$, $\mathcal{B}$, $\mathcal{C}_{i,h}$ and $\mathcal{K}_h^{\ast}$.

Regarding coercivity, note that by the construction of the VEM form and the definition of $\mathcal{K}_h^{\ast}$, we have
\begin{align*}
\mathcal{D}_h(\bw_h,\bw_h)&=\sum_{i=1}^{n_c}\Big[\mathcal{A}_{i,h}(w_{i,h},w_{i,h})+\mathcal{K}_h^{\ast}(\bu_h;w_{i,h},w_{i,h})+\mathcal{B}(\bu_h;w_{i,h},w_{i,h})+\mathcal{C}_{i,h}(\bw_h,w_{i,h})\Big] \\
&=\sum_{i=1}^{n_c}[\mathcal{A}_{i,h}(w_{i,h},w_{i,h})+\mathcal{B}(\bu_h;w_{i,h},w_{i,h})+\mathcal{C}_{i,h}(\bw_h,w_{i,h})]\\
&=\sum_{i=1}^{n_c}\sum_{K\in \Omega_h}(\mathcal{A}^{K}_{i,h}(w_{i,h},w_{i,h})+\dfrac{1}{2}(|f|\Pi_k^0 w_{i,h}, \Pi_k^0 w_{i,h})_{0,K}-(R_i(\boldsymbol{\Pi}_k^{0}\bw_h),\Pi_k^0 w_{i,h})_{0,K})\\
& \qquad +\sum_{i=1}^{n_c}\sum_{e\in \mathcal{E}_h}(|\bu_h\cdot \bn|w_{i,h},w_{i,h})_{0,K}.
\end{align*}

\corr{Therefore, employing inequality~\eqref{Ric}, in estimating the term containing $R_i$, it follows that}
\begin{align*}
\mathcal{D}_h(\bw_h,\bw_h)&\geq \sum_{i=1}^{n_c}\left(\sum_{K\in \Omega_h} |w_{i,h}|_{1,K}^2+\dfrac{1}{2}\Vert |f|^{1/2}\Pi_k^0 w_{i,h}\Vert_{0,K}^2+\sum_{e\in \mathcal{E}_h}\Vert|\bu_h\cdot \bn|^{1/2}w_{i,h}\Vert_{0,e}^2\right)\\
& \qquad  - \sum_{K\in \Omega_h} \sum_{i=1}^{n_c}(R_i(\boldsymbol{\Pi}_k^{0}\bw_h),\Pi_k^0 w_{i,h})_{0,K}\\
&\geq \frac{1}{2}\Vert \bw_h\Vert_h^2-\mu\sum_{K\in \Omega_h}\Vert \boldsymbol{\Pi}_k^{0}\bw_h\Vert_{0,K}^2\geq  \frac{1}{2}\Vert \bw_h\Vert_h^2-\mu \mu^{\ast}\sum_{K\in \Omega_h}\Vert\bw_h\Vert_{0,K}^2,
\end{align*}

where $\mu^{\ast}$ \corr{is the continuity constant of $\Pi^0$, see Lemma~\ref{lem:L2PLp0}}. 
From this last inequality, we deduce that
\[
\mathcal{D}_h(\bw_h,\bw_h)+\mu \mu^{\ast}\sum_{K\in \Omega_h}\Vert\bw_h\Vert_{0,K}^2\geq \frac{1}{2}\Vert \bw_h\Vert_h^2.
\]
This establishes coercivity and therefore completes the proof.
\end{proof}

We are now ready to establish the existence and uniqueness of the semi-discrete solution and, 
subsequently, to derive the associated convergence rates. 
In particular, by leveraging Theorem \ref{Funtional} and the properties established in Lemma \ref{PropertiesBF}, 
we obtain the existence and uniqueness of the solution to Problem \eqref{eqn:varFormsemi}, 
as stated in the following theorem.

\begin{thm} There exist a unique solution $\bc_h:=(c_{1,h},\dots,c_{n_c,h})$ to \corr{Problem \eqref{eqn:varFormsemi}} stisfying
\[
\max_{t\in (0,T]} \sum_{K\in \Omega_h}\hspace{-0.3em}\Vert\bc_{h}(t)\Vert_{0,K}^2+\int_{0}^T\hspace{-0.8em}\Vert \bc_h\Vert_h^2 \, d\bx \lesssim  \hspace{-0.2em}\sum_{K\in \Omega_h}\hspace{-0.4em}\Vert\bc_{h}(0)\Vert_{0,K}^2+\sum_{i=1}^{n_c}\int_{0}^T\hspace{-0.1em}\left(\hspace{-0.1em}\sum_{K\in \Omega_h}\Vert |f|^{1/2}\Tilde{c}_{i,h}\Vert_{0,K}^2+\hspace{-0.2em}\sum_{e\in \mathcal{E}_h^I}\Vert|\bu\cdot \bn|^{1/2}c_{i}\Vert_{0,e}^2\right).
\]
\end{thm}

\corr{The main result regarding the convergence of the semi--discrete problem is summarized 
in the following theorem, which provides an upper bound on the discretization error.}

\begin{thm}\label{errorsemi}
Let $\bc$ be the solution of \eqref{eqn:varForm} and let $\bc_h$ be the solution of \eqref{eqn:varFormsemi}. 
Furthermore, we assume that $\bc_I\in L^{4}((0,T);L^{4}(\Gamma_I))^{n_c}$, $f\tilde{\bc}\in L^2((0,T);H^s(\Omega))^{n_c}$, $\bc\in H^1((0,T);H^{1+s}(\Omega))^{n_c}$ and $\bu \in [L^4((0,T);H^{1+s}(\Omega))]^d$. Then,
\begin{equation}\label{errorsemd}
\sup_{t\in(0,T]}\Vert (\bc-\bc_h)(t,\cdot)\Vert_{L^2(\Omega)}^2+\int_{0}^T||| \bc-\bc_h|||^2  \lesssim C(\bu,\bc)h^{2s},
\end{equation}
where $C(\bu,\bc)$ denotes a positive constant that depends on the solution and the problem data.
\end{thm}
\begin{proof}
Let $\bc_{\pi}:=((c_{1})_{\pi},\dots,(c_{n_c})_{\pi})$, $\zz_h:=(z_{1,h},\dots,z_{n_c,h}):=\bc_h-\bc_{\pi}$ and $\epsilon>0$, then 
\[
m_h(\partial_t \zz_h,\zz_h)+\mathcal{D}_h(\bu_h;\zz_h,\zz_h)=\mathcal{F}_h^{\Omega}(\zz_h)+\mathcal{F}_h^{\Gamma}(\zz_h)-(\partial_t \bc_{\pi},\boldsymbol{\Pi}_k^{0}\zz_h)_{0,\Omega}-\mathcal{D}(\boldsymbol{\Pi}_k^{0}\bu_h;\bc_{\pi},\boldsymbol{\Pi}_k^{0}\zz_h).
\]
Using equation \eqref{revarf}, we can rewrite the previous identity as
\begin{equation}\label{decompotition}
m_h(\partial_t \zz_h,\zz_h)+\mathcal{D}_h(\bu_h;\zz_h,\zz_h)=T^{\Omega}+T^{\Gamma}+T^{m}+T^{\mathcal{D}},
\end{equation}
where
\begin{align*}
T^{\Omega}&:=\mathcal{F}_h^{\Omega}(\zz_h)-\mathcal{F}^{\Omega}(\zz_h),\quad
T^{\Gamma}:=\mathcal{F}_h^{\Gamma}(\zz_h)-\mathcal{F}^{\Gamma}(\zz_h),\\
T^{m}&:=(\partial_t \bc,\zz_h)_{0,\Omega}-(\partial_t \bc_{\pi},\boldsymbol{\Pi}_k^{0}\zz_h)_{0,\Omega},\quad \text{and}\quad T^{\mathcal{D}}:= \mathcal{D}(\bu;\bc,\zz_h)-\mathcal{D}(\boldsymbol{\Pi}_k^{0}\bu_h;\bc_{\pi},\boldsymbol{\Pi}_k^{0}\zz_h).
\end{align*}
To conclude the proof, it remains to bound each term in identity \eqref{decompotition} appropriately.
\begin{itemize}
    \item {\bf Term $T^{m}$:}
    Using the properties established in \corr{Lemmas \ref{lem:L2PLp0}} and \ref{polyproyect}, we obtain that
\begin{align*}
T^{m}&=( \partial_t\bc-(\partial_t\bc)_{\pi},\boldsymbol{\Pi}_k^{0}\zz_h)_{0,\Omega}+((I-\boldsymbol{\Pi}_k^{0})\partial_t\bc,\zz_h)_{0,\Omega}\lesssim \big[ \Vert \partial_t\bc-(\partial_t\bc)_{\pi}\Vert_{0,\Omega}+\Vert(I-\boldsymbol{\Pi}_k^{0})\partial_t\bc \Vert_{0,\Omega}\big] \Vert \zz_h\Vert_{0,\Omega}\\
&\lesssim h^{s}|\partial_t \bc|_{s,\Omega}\Vert \zz_h\Vert_{0,\Omega}.
\end{align*}
Then, applying Young’s inequality, we obtain
\begin{equation}\label{Tm}
T^{m}\lesssim h^{2s}|\partial_t \bc|_{s,\Omega}^2+\Vert \zz_h\Vert_{0,\Omega}^2
\end{equation}

\item {\bf Term $T^{\Omega}$:}
Using the approximation properties of the $L^2$-projection, we get
\begin{align*}
T^{\Omega}&=-(\tilde{f}\tilde{\bc},\zz_h)_{\Omega}+(\tilde{f}\tilde{\bc},\boldsymbol{\Pi}_{k}^{0}\zz_h)_{\Omega}=\sum_{K\in \Omega_h}(\tilde{f}\tilde{\bc},(-I+\boldsymbol{\Pi}_{k}^{0})\zz_h)_{K}\\
  &\lesssim \sum_{K\in \Omega_h}\Vert (I-\boldsymbol{\Pi}_{k}^{0})(\tilde{f}\tilde{\bc})\Vert_{0,K}\Vert \zz_h\Vert_{0,K}\lesssim \sum_{K\in \Omega_h} h_{K}^s|\tilde{f}\tilde{\bc}|_{s,K}\Vert \zz_h\Vert_{0,K}\lesssim h^s||f|\tilde{\bc}|_{s,\Omega}\Vert \zz_h\Vert_{0,\Omega},
\end{align*}
where $\tilde{f}:=\max\{0,f\}$. 
Applying Young’s inequality, we obtain
\begin{equation}\label{Tomega}
T^{\Omega}\lesssim h^{2s}|\tilde{f}\tilde{\bc}|_{s,\Omega}^2+\Vert \zz_h\Vert_{0,\Omega}^2.
\end{equation}

\item{\bf Term $T^{\Gamma}$:}

\corr{
Employing H\"older's inequality and the same reasoning used in inequalities~\eqref{boundaryinequality} and~\eqref{boundaryinequality2} to derive estimate~\eqref{estimtestar}, we obtain
} 
\begin{align*}
T^{\Gamma}&= -\sum_{e\in \mathcal{E}^{I}}([|\bu_h\cdot \bn|-|\bu\cdot \bn|]\bc_I,\zz_h)_e \lesssim \sum_{e\in \mathcal{E}^{I}}\Vert \bu_h-\bu\Vert_{0,e} \Vert \bc_I\Vert_{0,4,e} \Vert \zz_h \Vert_{0,4,e} \\
&\corr{\lesssim \left(\sum_{e\in \mathcal{E}^{I}}\Vert \bu_h-\bu\Vert_{0,e}^2\right)^{1/2} \left(\sum_{e\in \mathcal{E}^{I}} \Vert \bc_I\Vert_{0,4,e}^4\right)^{1/4}\left(\sum_{e\in \mathcal{E}^{I}} \Vert \zz_h \Vert_{0,4,e}^4\right)^{1/4}}\\
& \corr{\lesssim  h^{s+\frac{1}{2}}\Vert \bc_I\Vert_{0,4,\Gamma_{I}} |\bu|_{s+1,\Omega} \Vert \zz_h\Vert_{1,\Omega}\lesssim h^{s+\frac{1}{2}}\Vert \bc_I\Vert_{0,4,\Gamma_{I}} |\bu|_{s+1,\Omega} \Vert \zz_h\Vert_{h}}.
\end{align*}
Applying Young’s inequality, we obtain
\begin{equation}\label{Tgamma}
T^{\Gamma}-\epsilon \Vert \zz_h\Vert_{h}^2\lesssim h^{2s+1}\Vert \bc_I\Vert_{0,4,\Gamma_{I}}^2 |\bu|_{s+1,\Omega}^2
\end{equation}

\item {\bf Term $T^{\mathcal{D}}$:}

In order to carry out the analysis of this term, 
we rewrite it in the following form: 
\[
T^{\mathcal{D}}=T^{\mathcal{D}}_1+T^{\mathcal{D}}_2+T^{\mathcal{D}}_3+T^{\mathcal{D}}_4+T^{\mathcal{D}}_5,
\]
where 
\begin{align*}
T^{\mathcal{D}}_1&:=\sum_{i=1}^{n_c}D_i\big[(\nabla c_{i}, \nabla z_{i,h})_{\Omega}-(\nabla (c_{i})_{\pi}, \nabla \Pi_{k}^{\nabla} z_{i,h})_{\Omega}\big], \,\, T^{\mathcal{D}}_2:= \sum_{i=1}^{n_c}\big[ -(R_i(\bc), z_{i,h})_{\Omega}+(R_i(\bc_{\pi}),\Pi_k^0 z_{i,h})_{\Omega}\big],\\
T^{\mathcal{D}}_3&:= (|\bu \cdot \bn|\bc,\zz_{h})_{\Gamma}-(|\bu_h\cdot \bn|;\bc_{\pi},\zz_{h})_{\Gamma},\,\, T^{\mathcal{D}}_4:=(|f|\bc,\zz_h)_{\Omega}-(|f|\bc_{\pi},\boldsymbol{\Pi}_{k}^{0}\zz_h)_{\Omega} \,\,\, \text{and} \\
T^{\mathcal{D}}_5&:= \dfrac{1}{2}\sum_{i=1}^{n_c}\big[(\bu \cdot \nabla c_i,z_{i,h})_{\Omega}-(\bu \cdot \nabla z_{i,h},c_i)_{\Omega}-(\boldsymbol{\Pi}_k^{0}\bu_h \cdot  \nabla (c_i)_{\pi},\Pi_k^{0}z_{i,h})_{\Omega}+(\boldsymbol{\Pi}_k^{0}\bu_h \cdot \boldsymbol{\Pi}_k^{0} \nabla z_{i,h},(c_i)_{\pi})_{\Omega}\big].\\
\end{align*}

We begin by estimating the term $T^{\mathcal{D}}_1$. 
The following estimate follows from H\"older’s inequality, the approximation properties of the $\Pi^{\nabla}_k$-projection, and Lemma \ref{polyproyect}.
\begin{align*}
T^{\mathcal{D}}_1&=\sum_{i=1}^{n_c}D_i\big[(\nabla (I-\Pi_{k}^{\nabla})c_{i}, \nabla z_{i,h})_{\Omega}+(\nabla (c_i-(c_{i})_{\pi}), \nabla \Pi_{k}^{\nabla} z_{i,h})_{\Omega}\big]\\
&\lesssim \sum_{i=1}^{n_c}D_i\big[|(I-\Pi_{k}^{\nabla})c_{i}|_{1,\Omega}+|(c_i-(c_{i})_{\pi}|_{1,\Omega}\big]|z_{i,h}|_{1,\Omega}\\
&\lesssim \sum_{i=1}^{n_c}D_i h^s |c_i|_{s+1,\Omega}|z_{i,h}|_{1,\Omega} \lesssim \sum_{i=1}^{n_c}D_i h^s |c_i|_{s+1,\Omega}\Vert z_{i,h}\Vert_{h,d}\\
&\lesssim  h^s |\bc|_{s+1,\Omega}\Vert \zz_{h}\Vert_{h},
\end{align*}
and therefore
\[
T^{\mathcal{D}}_1-\epsilon\Vert\zz_{h}\Vert_{h}^2\lesssim  h^{2s} |\bc|_{s+1,\Omega}^2
\]

Regarding the term $T^{\mathcal{D}}_2$, by virtue of H\"older's inequality, 
the approximation properties of the $L^2$-projection  and Lemma \ref{polyproyect}, we infer that
\begin{align*}
T^{\mathcal{D}}_2&=\sum_{i=1}^{n_c}\big[ (R_i(\bc_{\pi}-\bc), \Pi_k^0 z_{i,h})_{K}+((\boldsymbol{\Pi}_{k}^{0}-I)R_i(\bc_{\pi}),z_{i,h})_{K}\big]\\
&\lesssim  \sum_{i=1}^{n_c}\big[ \Vert\bc_{\pi}-\bc\Vert_{0,\Omega}+\Vert (\boldsymbol{\Pi}_{k}^{0}-I)R_i(\bc_{\pi})\Vert_{0,\Omega}\big]\Vert z_{i,h}\Vert_{0,\Omega}\\
&\lesssim h^{s+1}|\bc|_{s+1,\Omega}\Vert \zz_{h}\Vert_{0,\Omega}.
\end{align*}
Consequently,
\[
T^{\mathcal{D}}_2\lesssim  h^{2s+2}|\bc|_{s+1,\Omega}^2+\Vert \zz_{h}\Vert_{0,\Omega}^2.
\]

Concerning the term $T^{\mathcal{D}}_3$, proceeding analogously to the treatment of $T^{\Gamma}$, we infer that 
\begin{align*}
T^{\mathcal{D}}_3&=\sum_{e\in \mathcal{E}_h}([|\bu_h\cdot \bn|-|\bu\cdot \bn|]\bc_{\pi},\zz_h)_e+(|\bu\cdot \bn|(\bc-\bc_{\pi}),\zz_h)_e]\\
&\lesssim \corr{h^{s+\frac{1}{2}}}[\Vert \bc\Vert_{0,4,\Gamma} |\bu|_{s+1,\Omega}+\Vert \bu\Vert_{0,4,\Gamma} |\bc|_{s+1,\Omega}] \Vert \zz_h\Vert_{h}.
\end{align*}
Hence,
\[
T^{\mathcal{D}}_3-\epsilon \Vert \zz_h\Vert_{h}^2\lesssim \corr{h^{2s+1}}[\Vert \bc\Vert_{0,4,\Gamma}^2 |\bu|_{s+1,\Omega}^2+\Vert \bu\Vert_{0,4,\Gamma}^2 |\bc|_{s+1,\Omega}^2].
\]
For the term $T^{\mathcal{D}}_4$, using H\"older's inequality, the approximation properties of the $L^2$-projection, and Lemma \ref{polyproyect}, we obtain that
\begin{align*}
T_4^{\mathcal{D}}&=\sum_{K\in \Omega_h}[(|f|\bc,\zz_h)_{K}-(|f|\bc_{\pi},\boldsymbol{\Pi}_{k}^{0}\zz_h)_{K}]=\sum_{K\in \Omega_h}[(|f|(\bc-\bc_{\pi}),\zz_h)_{K}+(|f|\bc_{\pi},(I-\boldsymbol{\Pi}_{k}^{0})\zz_h)_{K}]\\
       &=\sum_{K\in \Omega_h}[(|f|(\bc-\bc_{\pi}),\zz_h)_{K}+(|f|(\bc_{\pi}-\bc),(I-\boldsymbol{\Pi}_{k}^{0})\zz_h)_{K}+(|f|\bc,(I-\boldsymbol{\Pi}_{k}^{0})\zz_h)_{K}]\\
       &=\sum_{K\in \Omega_h}[(|f|(\bc-\bc_{\pi}),\zz_h)_{K}+(|f|(\bc_{\pi}-\bc),(I-\boldsymbol{\Pi}_{k}^{0})\zz_h)_{K}+((I-\boldsymbol{\Pi}_{k}^{0})(|f|\bc),\zz_h)_{K}]\\
       &\lesssim \sum_{K\in \Omega_h}[\Vert f\Vert_{0,4,K}\Vert \bc-\bc_{\pi}\Vert_{0,K}\Vert \zz_h\Vert_{0,4,K}+\Vert (I-\boldsymbol{\Pi}_{k}^{0})(|f|\bc)\Vert_{0,K}\Vert \zz_h\Vert_{0,K}]\\
       & \lesssim \sum_{K\in \Omega_h}[h_{K}^{s}\Vert f\Vert_{0,4,K}|\bc|_{s,K}\Vert \zz_h\Vert_{0,4,K}+h_{K}^s||f|\bc|_{s,K}\Vert \zz_h\Vert_{0,K}] \\
       & \lesssim h^{s}[\Vert f\Vert_{0,4,\Omega}|\bc|_{s,\Omega}\Vert \zz_h\Vert_{0,4,\Omega}+||f|\bc|_{s,\Omega}\Vert \zz_h\Vert_{0,\Omega}] \lesssim h^{s}[\Vert f\Vert_{0,4,\Omega}|\bc|_{s,\Omega}+||f|\bc|_{s,\Omega}]\Vert \zz_h\Vert_{1,\Omega}\\
       &\lesssim h^{s}[\Vert f\Vert_{0,4,\Omega}|\bc|_{s,\Omega}+||f|\bc|_{s,\Omega}]\Vert \zz_h\Vert_{h}
\end{align*}
From the previous inequality, we have that
\[
T_4^{\mathcal{D}}-\epsilon\Vert \zz_h\Vert_{h}^2 \lesssim h^{2s}[\Vert f\Vert_{0,4,\Omega}^2|\bc|_{s,\Omega}^2+||f|\bc|_{s,\Omega}^2].
\]
To bound the term $T^{\mathcal{D}}_5$, we begin by deriving estimates for the term 
\[
T^{\ast}:=\sum_{i=1}^{n_c}\big[(\bu\cdot \nabla c_{i},z_{i,h})_{\Omega}-(\boldsymbol{\Pi}_{k}^0\bu_h\cdot \nabla (c_{i})_{\pi},\Pi_{k}^{0}z_{i,h})_{\Omega}\big],
\]
namely,
\begin{align*}
T^{\ast}&=\sum_{i=1}^{n_c}\big[((I-\Pi_{k}^0)(\bu\cdot \nabla c_{i}),z_{i,h})_{\Omega}+(\bu\cdot \nabla( c_{i}-(c_i)_{\pi}),\Pi_k^0 z_{i,h})_{\Omega}\\
&\qquad +((I-\boldsymbol{\Pi}_{k}^0)\bu\cdot \nabla (c_{i})_{\pi},\Pi_{k}^0 z_{i,h})_{\Omega}+(\boldsymbol{\Pi}_{k}^0(\bu-\bu_h)\cdot \nabla c_{i}),\Pi_k^0 z_{i,h})_{\Omega}\big]
\end{align*}
Applying H\"older's inequality, the Lemma \ref{traceineq}, the Lemma \ref{polyproyect} ,the approximation properties of the $L^2$-projection, and standard Sobolev embeddings, we obtain
\begin{align*}
T^{\ast}&\lesssim \sum_{i=1}^{n_c}\big[\Vert (I-\Pi_{k}^0)(\bu\cdot \nabla c_{i})\Vert_{0,\Omega}+\Vert \bu \Vert_{0,4,\Omega}|c_i-(c_i)_{\pi}|_{1,\Omega}\\
\qquad &+\Vert (I-\boldsymbol{\Pi}_{k}^0)\bu\Vert_{0,\Omega}\Vert \nabla (c_i)_{\pi}\Vert_{0,4,\Omega}+\Vert \bu-\bu_h\Vert_{0,4,\Omega}\Vert \nabla (c_i)_{\pi}\Vert_{0,\Omega}\big] \Vert z_{i,h}\Vert_{1,\Omega}\\
&\lesssim \sum_{i=1}^{n_c}\big[h^s |\bu\cdot \nabla c_{i}|_{s,\Omega}+h^s\Vert \bu \Vert_{0,4,\Omega}|c_i|_{s+1,\Omega}+h^{s+1}h^{-\frac{d}{4}}|\bu|_{s+1,\Omega}|(c_i)_{\pi}|_{1,\Omega}+h^s|\bu|_{s+1,\Omega}|c_i|_{1,\Omega}\big]\Vert z_{i,h}\Vert_{h,d}\\
&\lesssim \sum_{i=1}^{n_c} h^s \big[|\bu\cdot \nabla c_{i}|_{s,\Omega}+\Vert \bu \Vert_{0,4,\Omega}|c_i|_{s+1,\Omega}+h^{\frac{4-d}{4}}|\bu|_{s+1,\Omega}|(c_i)_{\pi}|_{1,\Omega}+|\bu|_{s+1,\Omega}|c_i|_{1,\Omega}\big]\Vert z_{i,h}\Vert_{h,d}\\
&\lesssim h^s \big[| (\nabla \bc)\bu|_{s,\Omega}+\Vert \bu \Vert_{0,4,\Omega}|\bc|_{s+1,\Omega}+h^{\frac{4-d}{4}}|\bu|_{s+1,\Omega}|\bc|_{1,\Omega}+|\bu|_{s+1,\Omega}|\bc|_{1,\Omega}\big]\Vert \zz_{h}\Vert_{h}
\end{align*}
By employing arguments analogous to those used for the term $T^{\ast}$, we deduce that 
\[
T^{\circ}\lesssim h^s\big[ |\bu \bc^{\mathrm{t}}|_{s,\Omega}+|\bu|_{s+1,\Omega}\Vert \bc\Vert_{1,\Omega}+\Vert \bu\Vert_{0,4,\Omega}|\bc|_{s+1,\Omega}+|\bu|_{s+1,\Omega}\Vert \bc\Vert_{1,\Omega}\big]\Vert \zz_{h}\Vert_{h},
\]
where 
\[
T^{\circ}:=\sum_{i=1}^{n_c}\big[(\bu\cdot \nabla z_{i,h},c_{i})_{\Omega}-(\boldsymbol{\Pi}_{k}^0\bu_h\cdot \boldsymbol{\Pi}_{k}^{0}\nabla z_{i,h},(c_{i})_{\pi})_{\Omega}\big].
\]
Using the estimates derived for the terms $T^{\ast}$ and $T^{\circ}$, we can infer that
\[
T_5^{\mathcal{D}}\leq h^s\big[ | (\nabla \bc)\bu|_{s,\Omega}+|\bu \bc^{\mathrm{t}}|_{s,\Omega}+|\bu|_{s+1,\Omega}\Vert \bc\Vert_{1,\Omega}+\Vert \bu\Vert_{0,4,\Omega}|\bc|_{s+1,\Omega}+|\bu|_{s+1,\Omega}\Vert \bc\Vert_{1,\Omega}\big]\Vert \zz_{h}\Vert_{h}
\]
From the above, it follows that
\[
T_5^{\mathcal{D}}-\epsilon\Vert \zz_{h}\Vert_{h}^2\lesssim h^{2s}\big[ | (\nabla \bc)\bu|_{s,\Omega}^2+|\bu \bc^{\mathrm{t}}|_{s,\Omega}^2+|\bu|_{s+1,\Omega}^2\Vert \bc\Vert_{1,\Omega}^2+\Vert \bu\Vert_{0,4,\Omega}^2|\bc|_{s+1,\Omega}^2+|\bu|_{s+1,\Omega}^2\Vert \bc\Vert_{1,\Omega}^2\big].
\]
\end{itemize}
By employing all the estimates previously derived for each term in the decomposition shown in \eqref{decompotition}, we can deduce that
\[
m_h(\partial_t \zz_h,\zz_h)+\mathcal{D}_h(\bu_h;\zz_h,\zz_h)-4\epsilon \Vert \zz_{h}\Vert_{h}^2 \lesssim \tilde{C}(\bu,\bc)h^{2s}+\Vert \zz_{h}\Vert_{0,\Omega}^2,
\]
where 
\begin{gather*}
\tilde{C}(\bu,\bc):=| (\nabla \bc)\bu|_{s,\Omega}^2+|\bu \bc^{\mathrm{t}}|_{s,\Omega}^2+|\bu|_{s+1,\Omega}^2\Vert \bc\Vert_{1,\Omega}^2+\Vert \bu\Vert_{0,4,\Omega}^2|\bc|_{s+1,\Omega}^2+|\bu|_{s+1,\Omega}^2\Vert \bc\Vert_{1,\Omega}^2\\
\qquad \qquad\qquad\qquad +|\tilde{f}\tilde{\bc}|_{s,\Omega}^2+\Vert \bc \Vert_{0,4,\Gamma}^2|\bu|_{s+1,\Omega}^2+\Vert \bu\Vert_{0,4,\Gamma}^2|\bc|_{s+1,\Omega}^2+\Vert f\Vert_{0,4,\Omega}^2|\bc|_{s,\Omega}^2+||f|\bc|_{s,\Omega}^2 
\end{gather*}
Utilizing the coercivity properties of $\mathcal{D}_h$ and integrating over $t$ \corr{ with the choice $\epsilon=\frac{1}{8}$}, it follows that
\[
\Vert \zz_h(t)\Vert_{0,\Omega}+\int_{0}^t\Vert \zz_{h}(s)\Vert_{h}^2\,ds \lesssim h^{2s}\int_{0}^{t}\tilde{C}(\bu,\bc)\,ds+\int_{0}^{t}\Vert \zz_{h}(s)\Vert_{0,\Omega}^2\, ds+\Vert \zz_h(0)\Vert_{0,\Omega}
\]
Subsequently, applying Gr\"onwall's inequality to the previous estimate, it follows that
\[
\Vert \zz_h(t)\Vert_{0,\Omega}+\int_{0}^t\Vert \zz_{h}(s)\Vert_{h}^2\,ds \lesssim h^{2s}\int_{0}^{t}\tilde{C}(\bu,\bc)\,ds+\Vert \zz_h(0)\Vert_{0,\Omega}
\]
Ultimately, the preceding inequality enables us to conclude the theorem.
\end{proof}

\subsection{Fully discrete problem}

In this subsection, we present the fully discrete formulation of the proposed problem. 
Since we employ a Time-DG discretization based on Gauss--Radau quadrature points, 
we first briefly introduce the necessary notation and the key properties of this temporal scheme.
Then, in Section~\ref{sec:fully}, we establish the existence and uniqueness of the solution and 
derive the corresponding error estimates for the fully discrete system.

\subsubsection{Gauss--Radau quadrature and space--time discretization}

Given an interval $J\subset \mathbb{R}$, we denote by $\mathbb{P}_q(J)$ the space of polynomials of degree at most $q$.
We consider the following Gauss–Radau quadrature formula on the reference interval $I_{-1}:=(0,1]$:
\corr{
\begin{equation}\label{GR01}
GR_{I_{-1}}(\varphi):= \sum_{i=1}^{q+1}\omega_i \varphi(\xi_i),
\end{equation}
}
where $0<\xi_1<\cdots<\xi_{q+1}=1$ are the Radau integration points and $\omega_i>0$ are the Radau weights. 
Using the isomorphism of $(0,1]$ with $I_n = (t_{n-1},t_n]$, the formula \eqref{GR01} can be rewritten as
\corr{
\begin{equation}\label{GR}
GR_{I_n}(\varphi):=\tau_n \sum_{i=1}^{q+1}\omega_i \varphi(t_{n,i}),
\end{equation}
}
where $t_{n,i}:=t_{n-1}+\tau_{n}\xi_i$.
It is worth noticing that the quadrature rules provided in Equations~\eqref{GR01} and, 
consequently,~\eqref{GR} integrate exactly all polynomials of degree at most $2q$; \corr{that is, 
\[
GR_{I_n}(\varphi)=\int_{I_n} \varphi\, dt, \qquad \forall \varphi \in \mathrm{P}_{2q}(I_n).
\]
}
Starting from this time discretization, 
we define the time-discrete spaces 
$$
 V_{\tau}^q(I):=\left\{ v_{\tau}\in L^2(I): v_{\tau}|_{I_n}\in \mathbb{P}_q(I_n)\, \forall n=1,...,N\right\},
$$
\corr{
and the global space-time VE--DG space 
$$
V_{\tau,h}^{q,k}(I\times \Omega):=\left\{ v_{\tau,h}=\sum_{i=0}^{r}\theta_{i,\tau}\beta_{i,h}:I\times \Omega \rightarrow \mathbb{R}: r\in \mathbb{N},\,(\theta_{i,\tau},\beta_{i,h}) \in  V_h^{k}(\Omega)\times  V_{\tau}^q(I),\,\, i=1,\dots,l\right\}.
$$
\begin{rem}
Given a basis $\{\varphi_i\}_{i=1}^l$ for $V_h^{k}(\Omega)$, any function $v_{\tau,h} \in V_{\tau,h}^{q,k}(I\times \Omega)$ can be uniquely expanded as
$$
v_{\tau,h}(t,\bx)=\sum_{i=0}^{l}\theta_{i,\tau}(t)\varphi_i(\bx),
$$
where $\theta_{i,\tau} \in V_{\tau}^q(I)$ for $i=1,\dots,l$.
\end{rem}
}
Denote by $v^n$ the restriction of $v$ to the time slab $I_n \times \Omega $. 
Note that these functions may be discontinuous from one slab to the next. 
Therefore, we denote the limits by $v^{\mp,n}=\lim_{s\to 0^{\mp}}v({\bf x},t_n+s)$ and 
the jump by $\{ v\}^{n}=v^{+,n}-v^{-,n}$.

The following results and notations for the Time-DG scheme are essential for the subsequent analysis; 
for the proofs, we refer the reader to~\cite{refId0}.

\begin{lemm}\label{identitiesGR}
Let $v\in \mathbb{P}_{q}(I_n)$ and $\mathcal{L}_{\tau}v\in \mathbb{P}_q(I_n)$ be the Lagrange interpolation of the function $\tau_n(t-t_{n-1})^{-1}v$ at the points $t_{n,i}$, $i=1,...,q+1$:
$$
\mathcal{L}_{\tau}v(t_{n,i})=\tau_n(t_{n,i}-t_{n-1})^{-1}v(t_{n,i}), \qquad i=1,...,q+1.
$$
Then
\begin{equation}\label{Lagrangeint}
    \int_{I_n}\partial_t v \mathcal{L}_{\tau}v\, dt+v(t_{n-1})\mathcal{L}_{\tau}v(t_{n-1})=\frac{1}{2}\left( v^2(t_n)+\sum_{i=1}^{q+1}\omega_i\xi_{i}^{-1}v^2(t_{n,i})\right).
\end{equation}    
\end{lemm}

\begin{lemm}
For all $v\in \mathbb{P}_{q}(I_n)$, the following inequalities holds:
\begin{align}\label{minusineq}
    (\mathcal{L}_{\tau}v^{+,n-1})^2\lesssim \frac{1}{\tau_n}\int_{I_n}v^2\,dt\quad \text{and}\quad \corr{(v^{+,n-1})^2\lesssim \frac{1}{\tau_n}\int_{I_n}v^2\,dt}
\end{align}
\end{lemm}

\corr{
By exploiting the aforementioned properties of the Gauss–-Radau quadrature, 
we establish the norm equivalence presented in the following lemma, 
which will be fundamental for the subsequent analysis of the fully discrete problem.
}
\begin{lemm}
Suppose $\bu \in  L^{\infty}((0,T);(H^{1+s}(\Omega))^{d})$ and $f\in L^{\infty}((0,T);L^4(\Omega))$. Then there exist $h_0>0$ such that for all $h\leq h_0$, we have
\corr{
\begin{equation}\label{eqnormtime}
    \Vert v_{\tau,h}\Vert_{L^2(I_n;H^{1}(\Omega))}^2 \approx \tau_n\sum_{i=1}^{q+1}\omega_i \Vert v_{\tau,h}(t_{n,i})\Vert_{d,h}^2,
\end{equation}
for all $v_{\tau,h}\in V_{\tau,h}^{q,k}(I\times \Omega)$, $n=1,...,N$.
}
\end{lemm}
\begin{proof}
Using lemma~\ref{eqnormd} we have that
\corr{
\begin{equation}\label{localeqnorm}
\Vert v_{\tau,h}(t_{n,i})\Vert_{1,\Omega}^2 \approx \Vert v_{\tau,h}(t_{n,i})\Vert_{d,h}^2.
\end{equation}
Since $v_{\tau,h}$ belongs to the space $ V_{\tau,h}^{q,k}(I\times \Omega)$, we have 
$v_{\tau,h} = \sum_{j=1}^l \theta_{j,\tau} \varphi_j$, 
where $l:=dim(V_h^{k}(\Omega))$, $\{\varphi_j\}_{j=1}^l$ is basis for $V_h^{k}(\Omega)$, and $\theta_{j,\tau} \in V_{\tau}^q(I)$ for $j=1,\dots,l$.
}

\corr{
Since the Gauss quadrature rule is exact for polynomials of degree less than or equal to $2q$, we have
\begin{equation}\label{exacttheta}
GR_{I_n}(\theta_{i,\tau}\theta_{j,\tau})=\int_{I_n}\theta_{i,\tau}\theta_{j,\tau}\, dt, \qquad  i,j=1,\dots,l.
\end{equation}
Hence, employing equality \eqref{exacttheta}, we have 
\begin{align}\nonumber
  \Vert v_{\tau,h}\Vert_{L^2(I_n;L^{2}(\Omega))}^2&=\int_{I_n}\int_{\Omega}  \left(\sum_{i,j=1}^l \theta_{i,\tau}\theta_{j,\tau} \varphi_i \varphi_j\right)\,d\bx\,dt = \int_{\Omega}  \left( \sum_{i,j=1}^l \left(\int_{I_n}\theta_{i,\tau}\theta_{j,\tau}\,dt\right)\; \varphi_i \varphi_j\right)\,d\bx\\
  \nonumber
  &=\int_{\Omega}  \left( \sum_{i,j=1}^l \left(\tau_n\sum_{r=1}^{q+1}\omega_{r}\theta_{i,\tau}(t_{n,r})\theta_{j,\tau}(t_{n,r})\right)\; \varphi_i \varphi_j\right)\,d\bx\\
  \nonumber
  &=\int_{\Omega}   \left(\tau_n\sum_{r=1}^{q+1}\omega_{r}\sum_{i,j=1}^l \theta_{i,\tau}(t_{n,r})\theta_{j,\tau}(t_{n,r}) \varphi_i \varphi_j\right)\,d\bx\\
  \nonumber
  &=\int_{\Omega}   \left(\tau_n\sum_{r=1}^{q+1}\omega_{r}\left(\sum_{i=1}^l \theta_{i,\tau}(t_{n,r}) \varphi_i\right) \left(\sum_{j=1}^l \theta_{j,\tau}(t_{n,r}) \varphi_j\right)\right)\,d\bx =\int_{\Omega}   \left(\tau_n\sum_{r=1}^{q+1}\omega_{r}v_{\tau,h}(t_{n,r})^2\right)\,d\bx\\
  \label{idequnorm1}
  &= \tau_n\sum_{r=1}^{q+1}\omega_{r}\int_{\Omega}v_{\tau,h}(t_{n,r})^2\,d\bx=\tau_n\sum_{r=1}^{q+1}\omega_{r}\Vert v_{\tau,h}(t_{n,r})\Vert_{0,\Omega}^2
\end{align}
Furthermore, because $\nabla v_{\tau,h}=\sum_{j=1}^l \theta_{j,\tau} \nabla\varphi_j$, by applying arguments similar to identity \eqref{idequnorm1}, it follows that
\begin{equation}\label{idequnorm2} 
\Vert \nabla v_{\tau,h}\Vert_{L^2(I_n;L^{2}(\Omega))}^2=\tau_n\sum_{r=1}^{q+1}\omega_{r} |v_{\tau,h}(t_{n,r})|_{1,\Omega}^2.
\end{equation}
By employing \eqref{idequnorm1}, \eqref{idequnorm2}, and the equivalence \eqref{localeqnorm}, we obtain that
\[
\Vert v_{\tau,h}\Vert_{L^2(I_n;H^{1}(\Omega))}^2=\tau_n\sum_{r=1}^{q+1}\omega_{r} \Vert v_{\tau,h}(t_{n,r})\Vert_{1,\Omega}^2 \approx \tau_n\sum_{r=1}^{q+1}\omega_{r}\Vert v_{\tau,h}(t_{n,r})\Vert_{d,h}^2,
\]
which completes the proof.
}
\end{proof}

\subsubsection{Space--time projection and commutation properties}

For $v\in C^{0}((t_{n-1},t_n])$, 
we define the projector $\Pi_{n}^{\tau}$ in the following way:
\begin{enumerate}[(i)] 
    \item $\Pi_{n}^{\tau}v \in \mathbb{P}_{q}(I_n)$, 
    \item  $\int_{I_n}(\Pi_{n}^{\tau}v-v)w\, dt=0$, for all  $w\in \mathbb{P}_{q-1}(I_n)$,
    \item $\left( \Pi_{n}^{\tau}v\right)^{-,n}=v^{-,n}$.
\end{enumerate}
Let $v\in \cap_{n=1}^{N} C^0((t_{n-1},t_{n}]; L^2(\Omega))$ be given. 
The operator $\Pi^{\tau}$ is then defined as $\Pi^{\tau} v(t,\bx)=\Pi_{n}^{\tau}v(t,\bx)$ in $I_n\times \Omega$. 

\corr{
Given a function $v_h:=\sum_{j=1}^l \theta_{j} \varphi_j\in  \cap_{n=1}^{N} C^0((t_{n-1},t_{n}];V_h^k(\Omega))$ ($\theta_j\in \cap_{n=1}^{N} C^0((t_{n-1},t_{n}]) $, $j=1,\dots,l$), we have that
\[
m_h(\Pi^{\tau}v_{h}-v_{h},w_h)=\sum_{j=1}^l (\Pi^{\tau}\theta_{j}-\theta_{j})m_h(\varphi_j,w_h),
\]
for all $w_h\in V_h^k(\Omega)$.
}

\corr{
Consequently, we obtain
\begin{equation}\label{idmhint}
\int_{I_n}m_h(\Pi^{\tau}v_{h}-v_{h},w_h)\theta_{\tau}^{\ast}\,dt=0,
\end{equation}
for all  $v_h\in  \cap_{n=1}^{N} C^0((t_{n-1},t_{n}];V_h^k(\Omega))$,  $w_h\in V_h^k(\Omega)$, $\theta_{\tau}^{\ast}\in \mathbb{P}_{q-1}(I_n)$ and $n=1,...,N$.
} 

\corr{
Let $v_h\in \cap_{n=1}^{N} C^0((t_{n-1},t_{n}]; L^2(\Omega))$ and $w_{\tau,h}\in V_{\tau,h}^{q,k}(I\times\Omega)$ be given by their respective representations in the basis $\{\varphi_j\}_{j=1}^l$ of $V_h^k(\Omega)$, namely
$$v_h = \sum_{j=1}^l \theta_j \varphi_j \quad \text{and} \quad w_{\tau,h} = \sum_{j=1}^l \theta_{j,\tau}^{\ast} \varphi_j,$$
Using integration by parts, we can deduce that
\begin{align}
\nonumber
\int_{I_n}m_h(\partial_t v_h,w_{\tau,h})\, dt&=\int_{I_n}\sum_{i,j=1}^l \partial_t \theta_j \theta_{j,\tau}^{\ast}m_h(\varphi_i,\varphi_j)\,dt=\sum_{i,j=1}^l \int_{I_n}\partial_t \theta_j \theta_{j,\tau}^{\ast}\, dt\; m_h(\varphi_i,\varphi_j)\\
\nonumber
&=\sum_{i,j=1}^l m_h(\varphi_i,\varphi_j)\left[ \theta_j^{-,n} (\theta_{j,\tau}^{\ast})^{-,n}-\theta_j^{+,n-1} (\theta_{j,\tau}^{\ast})^{+,n-1}-\int_{I_n} \theta_j \partial_t \theta_{j,\tau}^{\ast}\, dt \right]\\
\nonumber
&=\sum_{i,j=1}^l\left[ \theta_j^{-,n} (\theta_{j,\tau}^{\ast})^{-,n} m_h(\varphi_i,\varphi_j)-\theta_j^{+,n-1} (\theta_{j,\tau}^{\ast})^{+,n-1} m_h(\varphi_i,\varphi_j)-\int_{I_n} \theta_j \partial_t \theta_{j,\tau}^{\ast} m_h(\varphi_i,\varphi_j)\, dt \right]\\
\label{identityDt}
&=m_h({v_{h}}^{-,n},{w_{\tau,h}}^{-,n})-m_h({v_{h}}^{+,n-1},{w_{\tau,h}}^{+,n-1})-\int_{I_n} m_h({v_{h}}, \partial_t {w_{\tau,h}})\, dt,
\end{align}
}

\corr{
By combining \eqref{idmhint}, \eqref{identityDt} and property (iii) of $\Pi^{\tau}$
\begin{align} \nonumber
\int_{I_n}m_h(\partial_t\Pi^{\tau}v_{h},w_{\tau,h})\,dt&=m_h((\Pi^{\tau}v_{h})^{-,n},w_{\tau,h}^{-,n})-m_h((\Pi^{\tau}v_{h})^{+,n-1},w_{\tau,h}^{+,n-1})-\int_{I_n}m_h(\Pi^{\tau}v_{h},\partial_t w_{\tau,h})\,dt \\
\nonumber
&=m_h((\Pi^{\tau}v_{h})^{-,n},w_{\tau,h}^{-,n})-m_h((\Pi^{\tau}v_{h})^{+,n-1},w_{\tau,h}^{+,n-1})-\int_{I_n}m_h(v_{h},\partial_t w_{\tau,h})\,dt \\
\label{intbyparts}
&=m_h((v_{h})^{+,n-1},(w_{\tau,h})^{+,n-1})-m_h((\Pi^{\tau}v_h)^{+,n-1},(w_{\tau,h})^{+,n-1})+\int_{I_n}m_h(\partial_t v_{h}, w_{\tau,h})\,dt.
\end{align}
}

Furthermore, the projector satisfies the following approximation property and
commutation results with spatial derivatives.
\begin{lemm}\label{Pitau}
If $v\in H^{q+1}(I_n)$, then 
$$
\Vert \Pi^{\tau}v-v\Vert_{L^2(I_n)}\lesssim \tau_n^{q+1}\Vert \partial_t^{q+1} v\Vert_{L^2(I_n)}.
$$
\end{lemm}
For $v \in L^2((0,T);H^k(\tilde{\Omega}))$, we also have:
$$
 \partial_{x_j}^{k} \Pi^{\tau} v=\Pi^{\tau} \partial_{x_j}^{k} v, \text{ a.e. in }\tilde{\Omega}.
$$

\subsubsection{Fully discrete formulation}\label{sec:fully}
\corr{
Given the continuity of the exact solution, the identity can be deduced from \eqref{revarf}
\begin{equation*}
\int_{I_n}[(\partial_t \bc,\bw)_{\Omega}+\mathcal{D}(\bc,\bw)]+(\bc^{+,n-1}-\bc^{-,n-1},\bw^{+,n-1})_{\Omega}=\int_{I_n}\mathcal{H}(\bw),
\end{equation*}
By employing a VEM spatial discretization, as in the semidiscrete case, and a Gauss-Radau quadrature rule for the time integral, we can obtain the following variational formulation:}
 
find $\bc_{\tau,h}:=(c_1^{\tau,h},\dots,c_{n_c}^{\tau,h})\in V_{\tau,h}^{q,k}(I\times \Omega)^{n_c}$ such that, 
for $n=1,...,N$:
\begin{align}
m_h(\bc_{\tau,h}^{+,n-1},\bw_{\tau,h}^{+,n-1})+\mathcal{D}_{\tau_{n},h}(\bc_{\tau,h},\bw_{\tau,h})&=\mathcal{F}_{\tau_n,h}^{\Omega}(\bw_{\tau,h})+\mathcal{F}_{\tau_n,h}^{\Gamma}(\bw_{\tau,h})+m_h(\bc_{\tau,h}^{-,n-1},\bw_{\tau,h}^{+,n-1}), \label{fullyproblem}\\
\bc_{\tau,h}^{-,0} &=\boldsymbol{\Pi}_{\tau,h}\bc_0, \nonumber
\end{align}
for all $\bw_{\tau,h} \in V_{\tau,h}^{q,k}(I\times \Omega)^{n_c}$, 
where the operators are defined via Gauss--Radau quadrature:
\begin{align*}
\mathcal{D}_{\tau_{n},h}(\bc_{\tau,h},\bw_{\tau,h})&:=\tau_n \sum_{i=1}^{q+1}\omega_i(\corr{m_h(\partial_t \bc_{\tau,h}(t_{n,i}),\bw_{\tau,h}(t_{n,i}))}+\mathcal{D}_{h}(\bc_{\tau,h}(t_{n,i}),\bw_{\tau,h}(t_{n,i}))\\
\mathcal{F}_{\tau_n,h}^{\Omega}(\bw_{\tau,h})&:=\tau_n \sum_{i=1}^{q+1}\omega_i \mathcal{F}_{h}^{\Omega}(\bw_{\tau,h}(t_{n,i})) \quad \text{and}\\
\mathcal{F}_{\tau_n,h}^{\Gamma}(\bw_{\tau,h})&:=\tau_n \sum_{i=1}^{q+1}\omega_i \mathcal{F}_{h}^{\Gamma}(\bw_{\tau,h}(t_{n,i})).
\end{align*}

\paragraph{Existence and uniqueness.}
To carry out the existence and uniqueness analysis, 
we define the following norm:
\[
\Vert \bw_{\tau,h}\Vert_{n,h}^2:=\tau_n \sum_{i=1}^{q+1}\omega_i \Vert \bw_{\tau,h}(t_{n,i})\Vert_h^2.
\]

\begin{thm} 
Suppose $\bu \in  L^{\infty}((0,T);(H^{1+s}(\Omega))^{d})$ for some $s>0$ and $f\in L^{\infty}((0,T);L^4(\Omega))$.
Then:
\begin{gather}
m_h(\bw_{\tau,h}^{+,n-1},\mathcal{L}_{\tau}\bw_{\tau,h}^{+,n-1})+\mathcal{D}_{\tau_n,h}(\bw_{\tau,h},\mathcal{L}_{\tau}\bw_{\tau,h})
 \gtrsim \Vert (\bw_{\tau,h})^{-,n}\Vert_{0,2,\Omega}^2+\frac{1}{\tau_n}\Vert \bw_{\tau,h} \Vert_{L^2(I_n;L^2(\Omega))}^2+\Vert \bw_{\tau,h}\Vert_{n,h}^2. \label{inf-sup}
\end{gather}
\end{thm}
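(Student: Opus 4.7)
The strategy is to test $\mathcal{A}_{\tau_n,h}(\cdot,\cdot)$ against the pair $(v_{\tau,h},\mathcal{L}_\tau v_{\tau,h})$ and to split the result into three natural pieces: the time-derivative contribution, the elliptic spatial contribution coming from $\mathcal{A}_{0,h}$, and the inter-slab jump at $t=t_{n-1}$. Three ingredients drive the proof: exactness of the Gauss--Radau rule for polynomials of degree up to $2q$; the identity \eqref{Lagrangeint}, promoted from scalar polynomials to $V_h$-valued polynomials (e.g.\ by expanding $v_{\tau,h}$ in an $m_h$-orthonormal basis of $V_h$ and applying the scalar identity coordinate-wise); and the coercivity of $\mathcal{A}_{0,h}$ from Lemma \ref{PropertiesBF} combined with the quadrature-norm equivalence \eqref{eqnormtime}.

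Concretely, I would first decompose
\begin{align*}
\mathcal{A}_{\tau_n,h}(v_{\tau,h},\mathcal{L}_\tau v_{\tau,h})
&= \tau_n\sum_{i=1}^{q+1}\omega_i\, m_h(\partial_t v_{\tau,h}(t_{n,i}),\mathcal{L}_\tau v_{\tau,h}(t_{n,i})) \\
&\quad + \tau_n\sum_{i=1}^{q+1}\omega_i\, \mathcal{A}_{0,h}(v_{\tau,h}(t_{n,i}),\mathcal{L}_\tau v_{\tau,h}(t_{n,i})) \\
&\quad + m_h\bigl(v_{\tau,h}^{+,n-1},(\mathcal{L}_\tau v_{\tau,h})^{+,n-1}\bigr).
\end{align*}
Because $t\mapsto m_h(\partial_t v_{\tau,h}(t),\mathcal{L}_\tau v_{\tau,h}(t))$ is a scalar polynomial of degree at most $2q-1$, Gauss--Radau is exact and the first sum equals $\int_{I_n} m_h(\partial_t v_{\tau,h},\mathcal{L}_\tau v_{\tau,h})\,dt$. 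Combining this with the jump term and applying the Hilbert-space lifted version of \eqref{Lagrangeint} yields
\[
\int_{I_n} m_h(\partial_t v_{\tau,h},\mathcal{L}_\tau v_{\tau,h})\,dt + m_h(v_{\tau,h}^{+,n-1},\mathcal{L}_\tau v_{\tau,h}^{+,n-1}) = \tfrac{1}{2}\Bigl[m_h(v_{\tau,h}^{-,n},v_{\tau,h}^{-,n}) + \sum_{i=1}^{q+1}\omega_i\xi_i^{-1}\, m_h(v_{\tau,h}(t_{n,i}),v_{\tau,h}(t_{n,i}))\Bigr].
\]
Using the VEM stability \eqref{vemI1} to trade $m_h(w,w)$ for $\Vert w\Vert_{0,2,\Omega}^2$ up to equivalence, the bound $\xi_i^{-1}\geq 1$ (as $\xi_i\in(0,1]$), and Radau exactness for the degree-$2q$ polynomial $t\mapsto \Vert v_{\tau,h}(t)\Vert_{0,2,\Omega}^2$ (so that $\tau_n\sum_i\omega_i\Vert v_{\tau,h}(t_{n,i})\Vert_{0,2,\Omega}^2 = \Vert v_{\tau,h}\Vert_{L^2(I_n;L^2(\Omega))}^2$), the right-hand side is bounded below by a positive multiple of $\Vert v_{\tau,h}^{-,n}\Vert_{0,2,\Omega}^2 + \tau_n^{-1}\Vert v_{\tau,h}\Vert_{L^2(I_n;L^2(\Omega))}^2$.

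For the elliptic piece, the pointwise identity $\mathcal{L}_\tau v_{\tau,h}(t_{n,i}) = \xi_i^{-1} v_{\tau,h}(t_{n,i})$ gives
\[
\tau_n\sum_{i=1}^{q+1}\omega_i\, \mathcal{A}_{0,h}\bigl(v_{\tau,h}(t_{n,i}),\mathcal{L}_\tau v_{\tau,h}(t_{n,i})\bigr) = \tau_n\sum_{i=1}^{q+1}\omega_i\xi_i^{-1}\, \mathcal{A}_{0,h}\bigl(v_{\tau,h}(t_{n,i}),v_{\tau,h}(t_{n,i})\bigr) \gtrsim \tau_n\sum_{i=1}^{q+1}\omega_i\, |||v_{\tau,h}(t_{n,i})|||^2,
\]
where I have used $\xi_i^{-1}\geq 1$ together with the coercivity $\mathcal{A}_{0,h}(w,w)\gtrsim |||w|||^2$ of Lemma \ref{PropertiesBF}. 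The quadrature norm equivalence \eqref{eqnormtime} then upgrades the last expression to $\gtrsim \Vert v_{\tau,h}\Vert_{L^2(I_n;H^1(\Omega))}^2$. Adding the two lower bounds proves \eqref{inf-sup}.

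The main conceptual hurdle I foresee is the clean justification of the Hilbert-space lift of \eqref{Lagrangeint}: the identity is stated for scalar polynomials, whereas here $v_{\tau,h}$ takes values in $V_h$ and the ``product'' is the symmetric bilinear form $m_h$. Fixing an $m_h$-orthonormal basis of $V_h$ and applying the scalar identity component-wise (or alternatively a polarization-style argument exploiting the symmetry and pointwise-in-space action of $\mathcal{L}_\tau$) handles this cleanly; beyond that step, the proof is essentially bookkeeping on top of Radau exactness, \eqref{vemI1}, coercivity of $\mathcal{A}_{0,h}$, and the equivalence \eqref{eqnormtime}.
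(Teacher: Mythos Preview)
Your proof is correct and follows essentially the same route as the paper's: split $\mathcal{A}_{\tau_n,h}(v_{\tau,h},\mathcal{L}_\tau v_{\tau,h})$ into the elliptic part and the time-derivative/jump part, handle the former via $\mathcal{L}_\tau v_{\tau,h}(t_{n,i})=\xi_i^{-1}v_{\tau,h}(t_{n,i})$, coercivity of $\mathcal{A}_{0,h}$, and \eqref{eqnormtime}, and handle the latter via Radau exactness, the identity \eqref{Lagrangeint}, and the VEM $m_h$--$L^2$ equivalence. The only cosmetic difference is in the lift of \eqref{Lagrangeint}: the paper uses the tensor-product form $v_{\tau,h}=\theta_\tau\beta_h$ built into the definition of $V_{\tau,h}$ (factoring out $m_h(\beta_h,\beta_h)$ and applying \eqref{Lagrangeint} to the scalar $\theta_\tau$), whereas you propose an $m_h$-orthonormal basis expansion---these are equivalent, and your version handles general linear combinations more transparently.
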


\begin{proof}
\corr{
By Lemma \ref{identitiesGR}, we have $\mathcal{L}_{\tau}\bw_{\tau,h}(t_{n,i})=\tau_n(t_{n,i}-t_{n-1})^{-1}\bw_{\tau,h}(t_{n,i})$. Since $t_{n,i} \in I_{n}$, it follows that $\tau_n(t_{n,i}-t_{n-1})^{-1} > 1$. Hence, by the coercivity of  $\mathcal{D}_h$ it follows that:
\begin{align*}
\tau_n\sum_{i=1}^{q+1}\omega_i\mathcal{D}_{h}(\bw_{\tau,h}(t_{n,i}),\mathcal{L}_{\tau}\bw_{\tau,h}(t_{n,i}))+\Vert \bw_{\tau,h} \Vert_{L^2(I_n;L^2(\Omega))} &\gtrsim \Vert \bw_{\tau,h}\Vert_{n,h}.
\end{align*}
}

\corr{
Let $\{\varphi_j\}_{j=1}^l$ be an $m_h$-orthogonal basis of the space $V_h^k(\Omega)$, i.e., $m_h(\varphi_i, \varphi_j) = 0$ for $i \neq j$, any function $v_{\tau,h} \in V_{\tau,h}^{q,k}(I \times \Omega)$ can be uniquely represented as
$$v_{\tau,h}(t,x) =  \sum_{j=1}^l  \theta_{j,\tau} (t) \varphi_j(x),$$
where $\theta_{j,\tau} \in V_{\tau}^q(I)$ for $j=1,\dots,l$. Next, we define the functional
$$I(v_{\tau,h}) := \tau_n \sum_{i=1}^{q+1} \omega_i m_h(\partial_t v_{\tau,h}(t_{n,i}), \mathcal{L}_{\tau} v_{\tau,h}(t_{n,i})) + m_h(v_{\tau,h}^{+,n-1}, \mathcal{L}_{\tau}v_{\tau,h}^{+,n-1}).$$
By virtue of identities \eqref{Lagrangeint} and \eqref{exacttheta}, and utilizing the $m_h$-orthogonality of the spatial basis, we then have
\begin{align*}
I(v_{\tau,h})&=\tau_n \sum_{i=1}^{q+1} \omega_i m_h(\partial_t v_{\tau,h}(t_{n,i}), \mathcal{L}_{\tau} v_{\tau,h}(t_{n,i})) + m_h(v_{\tau,h}^{+,n-1}, \mathcal{L}_{\tau}v_{\tau,h}^{+,n-1}) \\
&= \sum_{j=1}^l m_h(\varphi_j, \varphi_j) \left[ \tau_n \sum_{i=1}^{q+1} \omega_i \partial_t\theta_{j,\tau}(t_{n,i}) \mathcal{L}_{\tau} \theta_{j,\tau}(t_{n,i}) + \theta_{j,\tau}^{+,n-1} \mathcal{L}_{\tau} \theta_{j,\tau}^{+,n-1} \right]\\
&=\sum_{j=1}^l \frac{1}{2}m_h(\varphi_j, \varphi_j)\left[(\theta_{j,\tau}^{-,n})^2+\sum_{i=1}^{q+1}\omega_i \xi_{i}^{-1}(\theta_{j,\tau}(t_{n,i}))^2\right]\\
&\geq \sum_{j=1}^l \frac{1}{2}m_h(\varphi_j, \varphi_j)\left[(\theta_{j,\tau}^{-,n})^2+\sum_{i=1}^{q+1}\omega_i (\theta_{j,\tau}(t_{n,i}))^2\right]=\sum_{j=1}^l \frac{1}{2}m_h(\varphi_j, \varphi_j)\left[(\theta_{j,\tau}^{-,n})^2+\frac{1}{\tau_n}\int_{I_n} \theta_{j,\tau}^2\,dt\right]\\
&=\frac{1}{2}\left[m_h(v_{\tau,h}^{-,n},v_{\tau,h}^{-,n})+\frac{1}{\tau_n}\int_{I_n} m_h(v_{\tau,h},v_{\tau,h})\,dt\right]\gtrsim \frac{1}{2}\left[ \Vert v_{\tau,h}^{-,n}\Vert_{0,\Omega}^2+ \frac{1}{\tau_n}\int_{I_n}\Vert v_{\tau,h}(t) \Vert_{0,\Omega}^2\,dt\right].
\end{align*}
}

By combining these inequalities, we obtain
\corr{
\[
m_h(\bw_{\tau,h}^{+,n-1},\mathcal{L}_{\tau}\bw_{\tau,h}^{+,n-1})+\mathcal{D}_{\tau_n,h}(\bw_{\tau,h},\mathcal{L}_{\tau}\bw_{\tau,h})
 \gtrsim \Vert (\bw_{\tau,h})^{-,n}\Vert_{0,2,\Omega}^2+\left(\frac{1}{\tau_n}-1\right)\Vert \bw_{\tau,h} \Vert_{L^2(I_n;L^2(\Omega))}^2+\Vert \bw_{\tau,h}\Vert_{n,h}^2.
\] 
}
\corr{Finally, the conclusion is obtained by choosing $\tau_n$ sufficiently small.}
\end{proof}

\paragraph{Error estimates.}

\begin{lemm}\label{errorfullysemi}
Let $\bc$, $\tilde{\bc}$, $\bc_I$, $f$ and $\bu$ be functions satisfying the assumptions of Theorem \ref{errorsemi}. Additionally, assume that $\bc\in H^{q+1}(I_n;H^1(\Omega))^{n_c}$. Then 
\begin{equation}
    \Vert \bc_{\tau,h}^{-,n}-\bc_h(t_n)\Vert_{0,\Omega}^2+\Vert \bc_{\tau,h}-\boldsymbol{\Pi}^{\tau}\bc_h\Vert_{n,h}^2 \, dt \lesssim (\tau_n^{2q+1}+h^{2s})M(\bu,\bc)+\Vert \bc_{\tau,h}^{-,n-1}-\bc_{h}(t_{n-1})\Vert_{0,\Omega}^2,
\end{equation}
$n=1,...,N$, where $M(\bu,\bc)$ denotes a positive constant that depends on the solution and the problem data.
\end{lemm}

\begin{proof}
\corr{
Setting $\zz_{\tau,h}:=\bc_{\tau,h}-\boldsymbol{\Pi}^{\tau}\bc_h$ and employing \eqref{inf-sup}, we deduce that
\begin{equation}\label{eqerror1}
E:=m_h(\zz_{\tau,h}^{+,n-1},\mathcal{L}_{\tau}\zz_{\tau,h}^{+,n-1})+\mathcal{D}_{\tau_n,h}(\zz_{\tau,h},\mathcal{L}_{\tau}\zz_{\tau,h})
 \gtrsim \Vert (\zz_{\tau,h})^{-,n}\Vert_{0,2,\Omega}^2+\frac{1}{\tau_n}\Vert \zz_{\tau,h} \Vert_{L^2(I_n;L^2(\Omega))}^2+\Vert \zz_{\tau,h}\Vert_{n,h}^2.  
\end{equation}
By linearity, we can rewrite the term $E$ in \eqref{eqerror1}  as
\[
E=m_h(\bc_{\tau,h}^{+,n-1},\mathcal{L}_{\tau}\zz_{\tau,h}^{+,n-1})+\mathcal{D}_{\tau_n,h}(\bc_{\tau,h},\mathcal{L}_{\tau}\zz_{\tau,h})-m_h((\boldsymbol{\Pi}^{\tau}\bc_{h})^{+,n-1},\mathcal{L}_{\tau}\zz_{\tau,h}^{+,n-1})+\mathcal{D}_{\tau_n,h}(\boldsymbol{\Pi}^{\tau}\bc_{h},\mathcal{L}_{\tau}\zz_{\tau,h}).
\]
Using the fact that $\bc_{\tau,h}$ is a solution to Problem \eqref{fullyproblem}, we obtain
\[
E=\mathcal{F}_{\tau_n,h}^{\Omega}(\mathcal{L}_{\tau}\zz_{\tau,h})+\mathcal{F}_{\tau_n,h}^{\Gamma}(\mathcal{L}_{\tau}\zz_{\tau,h})+m_h(\bc_{\tau,h}^{-,n-1},(\mathcal{L}_{\tau}\zz_{\tau,h})^{+,n-1})-m_h((\boldsymbol{\Pi}^{\tau}\bc_{h})^{+,n-1},\mathcal{L}_{\tau}\zz_{\tau,h}^{+,n-1})+\mathcal{D}_{\tau_n,h}(\boldsymbol{\Pi}^{\tau}\bc_{h},\mathcal{L}_{\tau}\zz_{\tau,h}),
\]
By virtue of identity \eqref{eqn:varFormsemi}, we deduce that
\[
\mathcal{F}_{\tau_n,h}^{\Omega}(\mathcal{L}_{\tau}\zz_{\tau,h})+\mathcal{F}_{\tau_n,h}^{\Gamma}(\mathcal{L}_{\tau}\zz_{\tau,h})=\tau_n\sum_{i=1}^{q+1}\omega_i[m_h(\partial_t \bc_h(t_{n,i}),\mathcal{L}_{\tau}\zz_{\tau,h}(t_{n,i}))+\mathcal{D}_{h}(\bc_h(t_{n,i}),\mathcal{L}_{\tau}\zz_{\tau,h}(t_{n,i}))].
\]
Using the aforementioned equality, linearity, and the definition of $\mathcal{D}_{\tau_n,h}$, we can rewrite the term $E$ as follows
\begin{gather*}
E=\tau_n\sum_{i=1}^{q+1}\omega_i[m_h(\partial_t (I-\boldsymbol{\Pi}^{\tau})\bc_h(t_{n,i}),\mathcal{L}_{\tau}\zz_{\tau,h}(t_{n,i}))+\mathcal{D}_{h}((I-\boldsymbol{\Pi}^{\tau})\bc_h(t_{n,i}),\mathcal{L}_{\tau}\zz_{\tau,h}(t_{n,i}))]\\
+m_h(\bc_{\tau,h}^{-,n-1},\mathcal{L}_{\tau}\zz_{\tau,h}^{+,n-1})-m_h((\boldsymbol{\Pi}^{\tau}\bc_{h})^{+,n-1},\mathcal{L}_{\tau}\zz_{\tau,h}^{+,n-1}).
\end{gather*}
Since $\mathcal{L}_{\tau}\zz_{\tau,h}(t_{n,i})=\tau_n(t_{n,i}-t_{n-1})^{-1}\zz_{\tau,h}(t_{n,i})=\tau_n(\tau_n \xi_i)^{-1}\zz_{\tau,h}(t_{n,i})=\xi_i^{-1}\zz_{\tau,h}(t_{n,i})$, we have that
\begin{gather*}
E=\tau_n\sum_{i=1}^{q+1}\omega_i\xi_i^{-1}[m_h(\partial_t (I-\boldsymbol{\Pi}^{\tau})\bc_h(t_{n,i}),\zz_{\tau,h}(t_{n,i}))+\mathcal{D}_{h}((I-\boldsymbol{\Pi}^{\tau})\bc_h(t_{n,i}),\zz_{\tau,h}(t_{n,i}))]\\
+m_h(\bc_{\tau,h}^{-,n-1},\mathcal{L}_{\tau}\zz_{\tau,h}^{+,n-1})-m_h((\boldsymbol{\Pi}^{\tau}\bc_{h})^{+,n-1},\mathcal{L}_{\tau}\zz_{\tau,h}^{+,n-1}).
\end{gather*}
In order to estimate the term $E$, we proceed by decomposing it into $E = E_1 + E_2 + E_3$, where 
\begin{align*}
E_1 &:=\tau_n\sum_{i=1}^{q+1}\omega_i\xi_i^{-1}m_h(\partial_t (I-\boldsymbol{\Pi}^{\tau})\bc_h(t_{n,i}),\zz_{\tau,h}(t_{n,i})), \quad E_2:= \tau_n\sum_{i=1}^{q+1}\omega_i\xi_i^{-1}\mathcal{D}_{h}((I-\boldsymbol{\Pi}^{\tau})\bc_h(t_{n,i}),\zz_{\tau,h}(t_{n,i})) \\
& \text{and} \qquad E_3:=m_h(\bc_{\tau,h}^{-,n-1},\mathcal{L}_{\tau}\zz_{\tau,h}^{+,n-1})-m_h((\boldsymbol{\Pi}^{\tau}\bc_{h})^{+,n-1},\mathcal{L}_{\tau}\zz_{\tau,h}^{+,n-1}).
\end{align*}
Letting $I^\tau$ denote the Lagrange polynomial interpolant with respect to the temporal variable at the points $t_{n,i}$ for $i=1,\dots,q+1$, the exactness of the Gauss–Radau quadrature for polynomials of degree at most $2q$ implies that
\begin{gather*}
E_1\leq \xi_1^{-1}\tau_n\left| \sum_{i=1}^{q+1}\omega_i m_h(\partial_t (I-\boldsymbol{\Pi}^{\tau})\bc_h(t_{n,i}),\zz_{\tau,h}(t_{n,i}))\right|=\xi_1^{-1}\tau_n\left|\sum_{i=1}^{q+1}\omega_i m_h(I^{\tau}\partial_t (I-\boldsymbol{\Pi}^{\tau})\bc_h(t_{n,i}),\zz_{\tau,h}(t_{n,i}))\right|\\
=\xi_1^{-1} \left|\int_{I_n} m_h(I^{\tau}\partial_t (I-\boldsymbol{\Pi}^{\tau})\bc_h,\zz_{\tau,h})\,dt\right|,
\end{gather*}
Using the cotinuity of $m_h$ and the Cauchy-Schwarz inequality to the time integrals over $I_n$, shifting to the global space-time norm $L^2(I_n;L^2(\Omega))$
\begin{gather*}
E_1\lesssim \xi_1^{-1} \int_{I_n} \Vert I^{\tau}\partial_t (I-\boldsymbol{\Pi}^{\tau})\bc_h\Vert_{0,\Omega} \Vert \zz_{\tau,h}\Vert_{0,\Omega}\,dt \lesssim \xi_1^{-1}  \Vert I^{\tau}\partial_t (I-\boldsymbol{\Pi}^{\tau})\bc_h\Vert_{L^2(I_n;L^2(\Omega))} \Vert \zz_{\tau,h}\Vert_{L^2(I_n;L^2(\Omega))}.
\end{gather*}
By the temporal $L^2$-stability of the Lagrange interpolant $I^{\tau}$ and the standard temporal inverse inequality over $I_n$
\begin{gather}\label{boundE1}
E_1\lesssim \xi_1^{-1}  \Vert \partial_t (I-\boldsymbol{\Pi}^{\tau})\bc_h\Vert_{L^2(I_n;L^2(\Omega))} \Vert \zz_{\tau,h}\Vert_{L^2(I_n;L^2(\Omega))}\lesssim \xi_1^{-1}  \tau_n^{-1}\Vert  (I-\boldsymbol{\Pi}^{\tau})\bc_h\Vert_{L^2(I_n;L^2(\Omega))} \Vert \zz_{\tau,h}\Vert_{L^2(I_n;L^2(\Omega))}
\end{gather}
To bound the projection error, we insert the exact solution $\bc$ via the triangle inequality, yielding
\begin{equation}\label{triangularbc}
\Vert  (I-\boldsymbol{\Pi}^{\tau})\bc_h\Vert_{L^2(I_n;L^2(\Omega))}\leq \Vert  \boldsymbol{\Pi}^{\tau}(\bc-\bc_h)\Vert_{L^2(I_n;L^2(\Omega))}+\Vert  (I-\boldsymbol{\Pi}^{\tau})\bc \Vert_{L^2(I_n;L^2(\Omega))}+\Vert  \bc_h-\bc\Vert_{L^2(I_n;L^2(\Omega))} 
\end{equation}
Using the error bounds for $\bc_h$ and the approximation property provided in Lemma \ref{Pitau} we obtain
\begin{gather}\nonumber
\Vert  \boldsymbol{\Pi}^{\tau}(\bc-\bc_h)\Vert_{L^2(I_n;L^2(\Omega))}^2\lesssim \Vert  \bc-\bc_h\Vert_{L^2(I_n;L^2(\Omega))}^2\lesssim \sup_{t\in (0,T]}\Vert (\bc-\bc_h)(t)\Vert_{0,\Omega}^2\int_{I_n}\,dt\lesssim C(\bu,\bc)h^{2s}\tau_n\\
\label{aproxproperty} 
\text{and}\, \,\Vert  (I-\boldsymbol{\Pi}^{\tau})\bc \Vert_{L^2(I_n;L^2(\Omega))}\lesssim \tau_n^{q+1}\Vert \bc \Vert_{H^{q+1}(I_n;L^2(\Omega))} 
\end{gather}
Employing inequalities \eqref{boundE1} and \eqref{aproxproperty}, we have that
\begin{gather}\nonumber
E_1\lesssim \xi_1^{-1}  \tau_n^{-1}\left[C(\bu,\bc)^{1/2}h^{s}\tau_n^{1/2}+\tau_n^{q+1}\Vert \bc \Vert_{H^{q+1}(I_n;L^2(\Omega))}\right]\Vert \zz_{\tau,h}\Vert_{L^2(I_n;L^2(\Omega))}\\
\label{youngE1}
=\xi_1^{-1}  \left[C(\bu,\bc)^{1/2}h^{s}+\tau_n^{q+\frac{1}{2}}\Vert \bc \Vert_{H^{q+1}(I_n;L^2(\Omega))}\right]\tau_n^{-1/2}\Vert \zz_{\tau,h}\Vert_{L^2(I_n;L^2(\Omega))}
\end{gather}
Regarding the term $E_2$, invoking the continuity of the operator $\mathcal{D}_h$ together with the Cauchy–Schwarz inequality yields
\begin{align*}
E_2&\lesssim  \tau_n\left|\sum_{i=1}^{q+1}\omega_i\xi_i^{-1}\mathcal{D}_{h}((I-\boldsymbol{\Pi}^{\tau})\bc_h(t_{n,i}),\zz_{\tau,h}(t_{n,i}))\right|=  \tau_n\left|\sum_{i=1}^{q+1}\omega_i\xi_i^{-1}\mathcal{D}_{h}(I^{\tau}(I-\boldsymbol{\Pi}^{\tau})\bc_h(t_{n,i}),\zz_{\tau,h}(t_{n,i}))\right|\\
&\lesssim  \tau_n\sum_{i=1}^{q+1}\omega_i\xi_i^{-1}\Vert I^{\tau}(I-\boldsymbol{\Pi}^{\tau})\bc_h(t_{n,i})\Vert_h \Vert \zz_{\tau,h}(t_{n,i}))\Vert_h\lesssim  \tau_n\xi_1^{-1}\sum_{i=1}^{q+1}\omega_i\Vert I^{\tau}(I-\boldsymbol{\Pi}^{\tau})\bc_h(t_{n,i})\Vert_h \Vert \zz_{\tau,h}(t_{n,i}))\Vert_h \\
&\lesssim \xi_1^{-1}\left( \tau_n\sum_{i=1}^{q+1}\omega_i\Vert I^{\tau}(I-\boldsymbol{\Pi}^{\tau})\bc_h(t_{n,i})\Vert_h^2\right)^{1/2} \left( \tau_n\sum_{i=1}^{q+1}\omega_i\Vert \zz_{\tau,h}(t_{n,i}))\Vert_h^2\right)^{1/2} \\
&\lesssim \xi_1^{-1}\left( \tau_n\sum_{i=1}^{q+1}\omega_i\Vert I^{\tau}(I-\boldsymbol{\Pi}^{\tau})\bc_h(t_{n,i})\Vert_{1,\Omega}^2\right)^{1/2} \left( \tau_n\sum_{i=1}^{q+1}\omega_i\Vert \zz_{\tau,h}(t_{n,i}))\Vert_h^2\right)^{1/2}.
\end{align*}
Using the definition of $\Vert \cdot \Vert_{n,h}$, the exactness of the Gauss–Radau quadrature for polynomials of degree at most $2q$ and the temporal $L^2$-stability of the Lagrange interpolant $I^{\tau}$, we have that
\begin{align*}
E_2&\lesssim \xi_1^{-1}\Vert I^{\tau}(I-\boldsymbol{\Pi}^{\tau})\bc_h\Vert_{L^2(I_n; H^1(\Omega))} \left( \tau_n\sum_{i=1}^{q+1}\omega_i\Vert \zz_{\tau,h}(t_{n,i}))\Vert_h^2\right)^{1/2}\\
&\lesssim \xi_1^{-1}\Vert I^{\tau}(I-\boldsymbol{\Pi}^{\tau})\bc_h\Vert_{L^2(I_n; H^1(\Omega))} \Vert \zz_{\tau,h}\Vert_{n,h}
\lesssim \xi_1^{-1}\Vert (I-\boldsymbol{\Pi}^{\tau})\bc_h\Vert_{L^2(I_n; H^1(\Omega))} \Vert \zz_{\tau,h}\Vert_{n,h}
\end{align*}
Employing inequalities \eqref{triangularbc} and \eqref{aproxproperty}, we have that
\begin{gather}\label{youngE2}
E_2\lesssim \xi_1^{-1}  \left[C(\bu,\bc)^{1/2}h^{s}\tau_n^{1/2}+\tau_n^{q+1}\Vert \bc \Vert_{H^{q+1}(I_n;L^2(\Omega))}\right]\Vert \zz_{\tau,h}\Vert_{n,h}
\end{gather}
By virtue of the continuity of $\bc_h$ with respect to the temporal variable, we obtain $\bc_h^{+,n-1}=\bc_h^{-,n-1}$, hence
\[
m_h(\bc_{h}^{-,n-1},\mathcal{L}_{\tau}\zz_{\tau,h}^{+,n-1})-m_h(\bc_{h}^{+,n-1},\mathcal{L}_{\tau}\zz_{\tau,h}^{+,n-1})=0.
\]
Employing this last inequality, we can write $E_3$ as
\[
E_3=m_h(\bc_{\tau,h}^{-,n-1}-\bc_{h}^{-,n-1},\mathcal{L}_{\tau}\zz_{\tau,h}^{+,n-1})+m_h((\bc_{h}-\boldsymbol{\Pi}^{\tau}\bc_{h})^{+,n-1},\mathcal{L}_{\tau}\zz_{\tau,h}^{+,n-1}).
\]
Using the continuity of $m_h$, we have that
\[
E_3\lesssim \left[\Vert \bc_{\tau,h}^{-,n-1}-\bc_{h}^{-,n-1} \Vert_{0,\Omega}+\Vert (\bc_{h}-\boldsymbol{\Pi}^{\tau}\bc_{h})^{+,n-1} \Vert_{0,\Omega}\right]\Vert \mathcal{L}_{\tau}\zz_{\tau,h}^{+,n-1}\Vert_{0,\Omega}.
\]
Employing the inequalities in \eqref{minusineq} and the $L^2$ continuity of $I^{\tau}$, we have that
\begin{gather*}
E_3\lesssim \left[\Vert \bc_{\tau,h}^{-,n-1}-\bc_{h}^{-,n-1} \Vert_{0,\Omega}+\tau_n^{-1/2}\Vert I^{\tau}\bc_{h}-\boldsymbol{\Pi}^{\tau}\bc_{h} \Vert_{L^{2}(I_n;L^2(\Omega))}\right]\tau_n^{-1/2}\Vert \zz_{\tau,h}\Vert_{L^{2}(I_n;L^2(\Omega))}\\
= \left[\Vert \bc_{\tau,h}^{-,n-1}-\bc_{h}^{-,n-1} \Vert_{0,\Omega}+\tau_n^{-1/2}\Vert I^{\tau}(\bc_{h}-\boldsymbol{\Pi}^{\tau}\bc_{h}) \Vert_{L^{2}(I_n;L^2(\Omega))}\right]\tau_n^{-1/2}\Vert \zz_{\tau,h}\Vert_{L^{2}(I_n;L^2(\Omega))}\\
\lesssim\left[\Vert \bc_{\tau,h}^{-,n-1}-\bc_{h}^{-,n-1} \Vert_{0,\Omega}+\tau_n^{-1/2}\Vert \bc_{h}-\boldsymbol{\Pi}^{\tau}\bc_{h} \Vert_{L^{2}(I_n;L^2(\Omega))}\right]\tau_n^{-1/2}\Vert \zz_{\tau,h}\Vert_{L^{2}(I_n;L^2(\Omega))}
\end{gather*}
Using inequalities \eqref{triangularbc} and \eqref{aproxproperty}, we have that
\[
\Vert \bc_{h}-\boldsymbol{\Pi}^{\tau}\bc_{h} \Vert_{L^{2}(I_n;L^2(\Omega))}\lesssim C(\bu,\bc)^{1/2}h^{s}\tau_n^{1/2}+\tau_n^{q+1}\Vert \bc \Vert_{H^{q+1}(I_n;L^2(\Omega))}
\]
This last inequality implies that
\begin{equation}\label{youngE3}
E_3\lesssim \left[\Vert \bc_{\tau,h}^{-,n-1}-\bc_{h}^{-,n-1} \Vert_{0,\Omega}+ C(\bu,\bc)^{1/2}h^{s}+\tau_n^{q+\frac{1}{2}}\Vert \bc \Vert_{H^{q+1}(I_n;L^2(\Omega))}\right]\tau_n^{-1/2}\Vert \zz_{\tau,h}\Vert_{L^{2}(I_n;L^2(\Omega))}.
\end{equation}
Combining inequalities \eqref{youngE1}, \eqref{youngE2}, and \eqref{youngE3} yields
\begin{gather}\nonumber
E\lesssim \left[\Vert \bc_{\tau,h}^{-,n-1}-\bc_{h}^{-,n-1} \Vert_{0,\Omega}+ C(\bu,\bc)^{1/2}h^{s}+\tau_n^{q+\frac{1}{2}}\Vert \bc \Vert_{H^{q+1}(I_n;L^2(\Omega))}\right]\tau_n^{-1/2}\Vert \zz_{\tau,h}\Vert_{L^{2}(I_n;L^2(\Omega))}\\
\label{youngE}
\qquad+\left[C(\bu,\bc)^{1/2}h^{s}\tau_n^{1/2}+\tau_n^{q+1}\Vert \bc \Vert_{H^{q+1}(I_n;L^2(\Omega))}\right]\Vert \zz_{\tau,h}\Vert_{n,h}.
\end{gather}
Applying Young's inequality to the inequality \eqref{youngE} and employing inequality \eqref{eqerror1}, we can deduce that
\begin{gather*}
\Vert (\zz_{\tau,h})^{-,n}\Vert_{0,2,\Omega}^2+\frac{1}{2\tau_n}\Vert \zz_{\tau,h} \Vert_{L^2(I_n;L^2(\Omega))}^2+\frac{1}{2}\Vert \zz_{\tau,h}\Vert_{n,h}^2 \lesssim
\Vert \bc_{\tau,h}^{-,n-1}-\bc_{h}^{-,n-1} \Vert_{0,\Omega}^2+ C(\bu,\bc)h^{2s}\\
+\tau_n^{2q+1}\Vert \bc \Vert_{H^{q+1}(I_n;L^2(\Omega))}^2+C(\bu,\bc)h^{2s}\tau_n+\tau_n^{2q+2}\Vert \bc \Vert_{H^{q+1}(I_n;L^2(\Omega))}^2,
\end{gather*}
which completes the proof.
}
\end{proof}

\begin{thm}\label{errorfully}
Let $\bc$, $\tilde{\bc}$, $\bc_I$, $f$ and $\bu$ satisfy the assumptions of Theorem \ref{errorsemi} and Lemma \ref{errorfullysemi}. Then there exists a constant $C>0$ such that the error $\mathfrak{e}=\bc-\bc_{\tau,h}$ satisfies:
\begin{equation}\label{ferror}
    \Vert \mathfrak{e}^{-,n}\Vert_{0,\Omega}^2+\Vert \mathfrak{e}\Vert_{L^2(I_n;H^1(\Omega))}^2  \lesssim C(h^{2s}+\tau_n^{2q+1}),
\end{equation}
for $n=1,...,N$.
\end{thm}
\begin{proof}
\corr{
Employing the triangle inequality alongside the approximation properties of $\boldsymbol{\Pi}^{\tau}$, it follows that
\begin{gather*}
\Vert \bc-\boldsymbol{\Pi}^{\tau} \bc_h\Vert_{L^2(I_n;H^1(\Omega))}\leq \Vert \bc-\boldsymbol{\Pi}^{\tau} \bc\Vert_{L^2(I_n;H^1(\Omega))}+\Vert \boldsymbol{\Pi}^{\tau} (\bc-\bc_h)\Vert_{L^2(I_n;H^1(\Omega))}\\
\lesssim \tau_n^{q+1}\Vert \bc\Vert_{H^{q+1}(I_n;H^1(\Omega))}+\Vert \bc-\bc_h\Vert_{L^2(I_n;H^1(\Omega))}
\end{gather*}
Combining the triangle inequality with the preceding inequality, we obtain
\begin{gather*}
\Vert \bc- \bc_{\tau,h}\Vert_{L^2(I_n;H^1(\Omega))}\leq \Vert \bc-\boldsymbol{\Pi}^{\tau} \bc_h\Vert_{L^2(I_n;H^1(\Omega))}+\Vert \boldsymbol{\Pi}^{\tau} \bc_h-\bc_{\tau,h}\Vert_{L^2(I_n;H^1(\Omega))}\\
\lesssim \tau_n^{q+1}\Vert \bc\Vert_{H^{q+1}(I_n;H^1(\Omega))}+\Vert \bc-\bc_h\Vert_{L^2(I_n;H^1(\Omega))}+\Vert \boldsymbol{\Pi}^{\tau} \bc_h-\bc_{\tau,h}\Vert_{L^2(I_n;H^1(\Omega))}
\end{gather*}
On the other hand, using the triangle inequality, we obtain
\begin{gather*}
\Vert (\bc- \bc_{\tau,h})^{-,n}\Vert_{0,\Omega}\lesssim \Vert (\bc- \bc_{h})^{-,n}\Vert_{0,\Omega}+\Vert (\bc_h- \bc_{\tau,h})^{-,n}\Vert_{0,\Omega}\lesssim \sup_{t\in(0,T]}\Vert (\bc- \bc_{h})(t)\Vert_{0,\Omega}+\Vert (\bc_h- \bc_{\tau,h})^{-,n}\Vert_{0,\Omega}
\end{gather*}
Combining all the preceding inequalities with Theorems \ref{errorsemi} and \ref{errorfullysemi}, we obtain
\begin{gather*}
\Vert \mathfrak{e}^{-,n}\Vert_{0,\Omega}^2+\Vert \mathfrak{e}\Vert_{L^2(I_n;H^1(\Omega))}^2\lesssim \sup_{t\in(0,T]}\Vert (\bc- \bc_{h})(t)\Vert_{0,\Omega}^2+\Vert (\bc_h- \bc_{\tau,h})^{-,n}\Vert_{0,\Omega}^2+\tau_n^{q+1}\Vert \bc\Vert_{H^{q+1}(I_n;H^1(\Omega))}\\
+\Vert \bc-\bc_h\Vert_{L^2(I_n;H^1(\Omega))}+\Vert \boldsymbol{\Pi}^{\tau} \bc_h-\bc_{\tau,h}\Vert_{L^2(I_n;H^1(\Omega))}\\
\lesssim C(\bu,\bc)h^{2s}+M(\bu,\bc)(\tau_n^{2q+1}+h^{2s})+\tau_n^{2q+2}\Vert \bc\Vert_{H^{q+1}(I_n;H^1(\Omega))}^2+\Vert (\bc_h- \bc_{\tau,h})^{-,n-1}\Vert_{0,\Omega}^2.
\end{gather*}
Then, using the fact that $\Vert (\bc_h- \bc_{\tau,h})^{-,0}\Vert_{0,\Omega}\lesssim h^{s+1}\Vert c(0)\Vert_{0,\Omega}$, the desired result follows.
}
\end{proof}

\section{Numerical Experiment}\label{sec:numExe}

In this section we present numerical evidence that supports the theoretical results of the previous sections.
Specifically, in Section~\ref{sec:num:conv} we perform a convergence analysis of the proposed method, while Section~\ref{sec:num:appl} presents a more application-oriented example.
Finally, in Section~\ref{sec:num:appl}, we consider a more applicative example.
In all these experiments, we have used the \texttt{C++} library \texttt{vem++}~\cite{Dassi:2023:VAC}.

\subsection{Convergence Analysis}\label{sec:num:conv}

In this section, we consider the following model problem.
Let $\Omega=(0,\,1)^2$ be the domain.
We set $K=D=\mathbb{I}$ and $\mu=1$.
We choose the Darcy forcing term $f$ and the functions $\tilde{c}$ and $c_I$ 
so that the exact solution is
$$
c(x,\,y,\,t) = \sin(t)\,e^{(x-1)^2(y-1)^2}\,,
$$
subjected to the following flow field 
$$
\mathbf{u}(x,\,y) = \begin{bmatrix}
e^x\\
e^y
\end{bmatrix}\,.
$$
According to this vector field,
we have that the inflow and outflow boundaries are 
\begin{align*}
\Gamma_{\mathrm{I}}=\left\{ (x,\,y) \in \partial\Omega:\, x=0 \text{ or } y=0 \right\}\quad\text{and}\quad
\Gamma_{\mathrm{O}}=\left\{ (x,\,y) \in \partial\Omega:\, x=1 \text{ or } y=1 \right\}.
\end{align*}
 
Starting from this problem, we aim to numerically verify the behavior of the 
error estimator defined in Equation~\eqref{ferror}:
$$
\texttt{err}^2 :=
    \Vert e^{-,m}\Vert_{L^2(\Omega)}^2+\sum_{n=1}^m\int_{I_n}\Vert e\Vert_{H^1(\Omega)}^2 \, dt
$$
To achieve this goal,
we consider a sequence of four space–time discretizations with decreasing mesh size and time step.
In order to check the robustness of the proposed method with respect to the elements' shape,
we use the following types of space discretizations:
\begin{itemize}
    \item \texttt{quad}: structured uniform square elements, see Figure~\ref{fig:meshTypes}(a);
    \item \texttt{hexa}: distorted hexagonal elements, see Figure~\ref{fig:meshTypes}(b);
    \item \texttt{voro}: Voronoi cells optimized using the Lloyd algorithm~\cite{Du:1999:CVT}, see Figure~\ref{fig:meshTypes}(c);
    \item \texttt{rand}: Voronoi cells with seed points randomly distributed within the domain~$\Omega$, see Figure~\ref{fig:meshTypes}(d).
\end{itemize}

\begin{figure}[!htb]
    \centering
    \begin{tabular}{cccc}
    \texttt{quad2} &\texttt{hexa2} &\texttt{voro2} &\texttt{rand2}\\
    \includegraphics[width=0.22\textwidth]{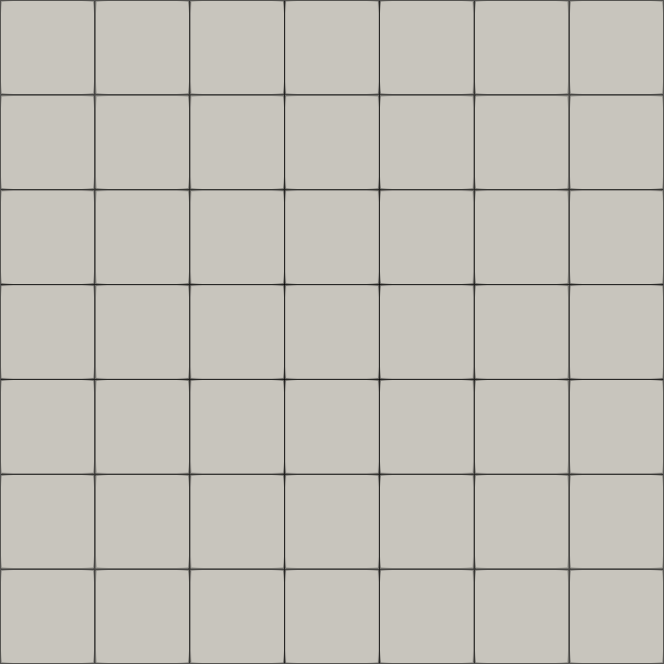} &
    \includegraphics[width=0.22\textwidth]{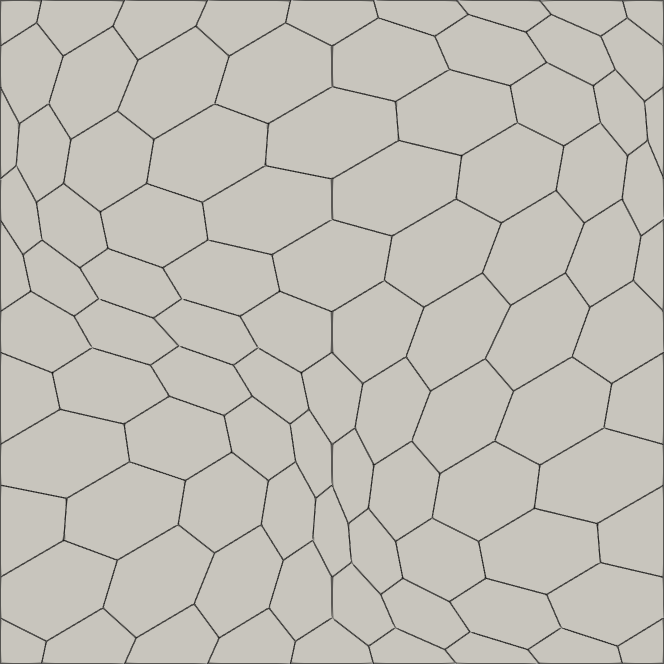} &
    \includegraphics[width=0.22\textwidth]{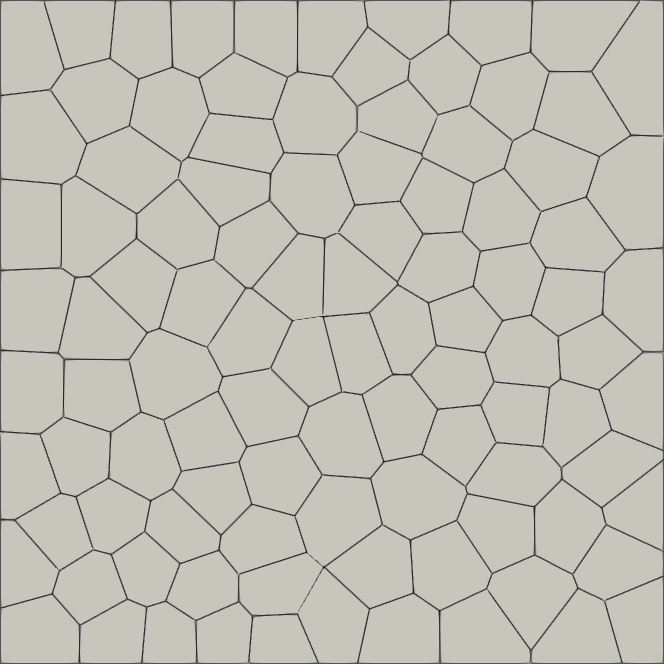} &
    \includegraphics[width=0.22\textwidth]{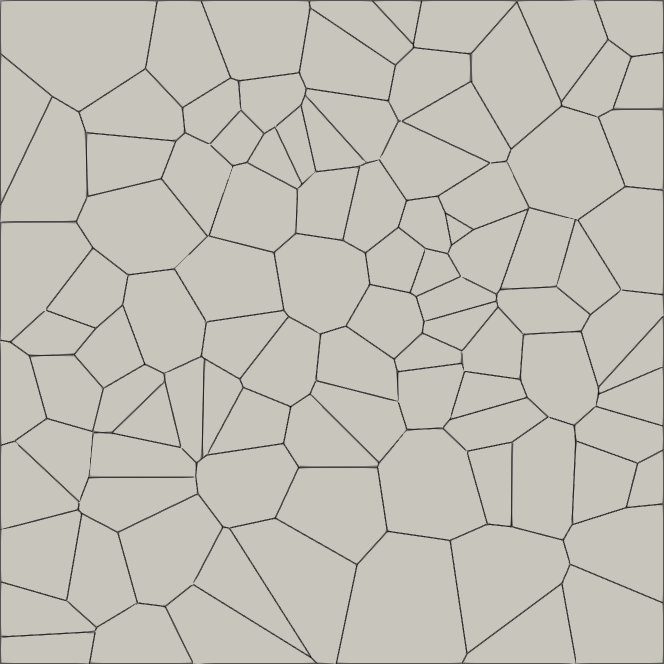} \\
    (a) &(b) &(c) &(d)\\
    \end{tabular}
    \caption{Different mesh types used for the convergence analysis.}
    \label{fig:meshTypes}
\end{figure}

These four types of meshes have an increasing distortion level.
The \texttt{quad} one is the most regular. 
The \texttt{hexa} mesh type has distorted elements but a mostly uniform topology, consisting mainly of hexagons.
The \texttt{voro} meshes consist of general polygons with both short and long edges,
but they have a regular shape due to the Lloyd optimization. 
Finally, the \texttt{rand} mesh is the most challenging one, 
as it contains highly distorted elements and very short edges.

Then, to show the convergence of the errors indicators, 
we construct four decompositions the unit square~$\Omega=(0,\,1)^2$ with decreasing mesh size~$h$ for each of them.
We label these discretizations with the numbers from 1 to 4,
1 will be the coarsest and 4 the finest.
For instance \texttt{voro1} refers to the coarser mesh of the \texttt{voro} type.

For the time discretization, we always consider the time interval~$[0,\,1]$
and use uniform time steps. 
More precisely, we define four temporal partitions, 
\texttt{inter1}, \texttt{inter2}, \texttt{inter3}, and \texttt{inter4},
corresponding to time steps $\Delta t = 1/3$, $1/6$, $1/12$, and $1/24$, respectively.
For all the simulations we keep the ratio between $\Delta t$ and $h$ constant.
We set the same approximation degree for both space and time, denoted by~$k$.
We compute the errors 
using the following space--time discretizations:
$$
(\texttt{mesh1},\texttt{inter1}),\qquad(\texttt{mesh2},\texttt{inter2}),\qquad (\texttt{mesh3},\texttt{inter3}),\qquad\text{and}\qquad(\texttt{mesh4},\texttt{inter4}),
$$
where \texttt{mesh*} refer to one type of mesh.

In Figure~\ref{fig:convH}, we report the convergence curves corresponding 
to each mesh type for polynomial degrees 1, 2, and 3.
For all space–time approximation degrees, the convergence rates are as expected.
Moreover, the curves remain close to each other across different mesh geometries, 
highlighting the robustness of the proposed method with respect to the element shape.
Although the theoretical estimate is not explicitly provided in this paper,
we also compute the $H^1$ seminorm of the error at the final time.
This error indicator exhibits the expected convergence rate and,
as further evidence of the robustness with respect to the element shape,
the convergence curves corresponding to the same approximation degree are very close to each other.

\begin{figure}[!htb]
     \centering
    \begin{tabular}{cc}
    \includegraphics[height=0.41\textwidth]{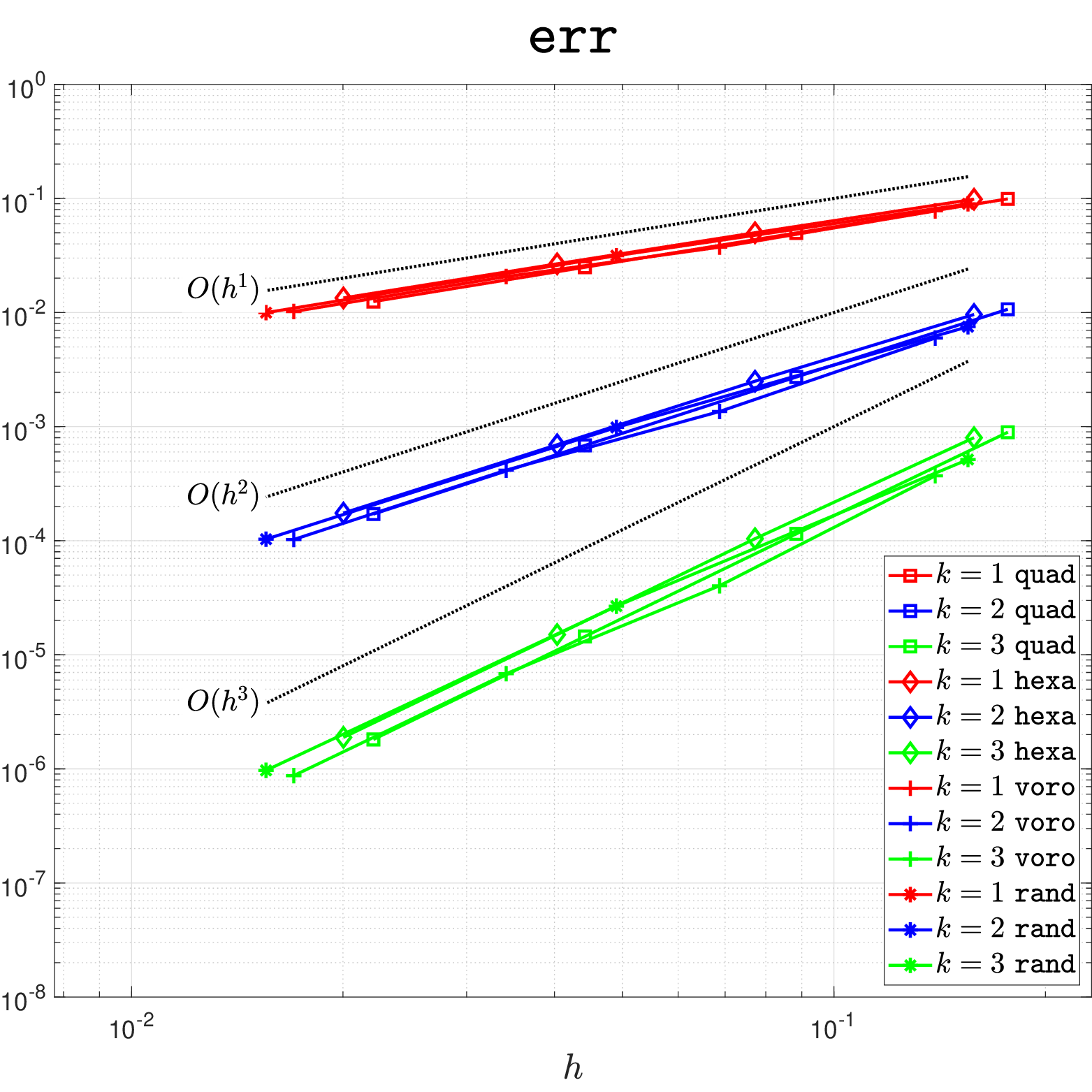} &
    \includegraphics[height=0.41\textwidth]{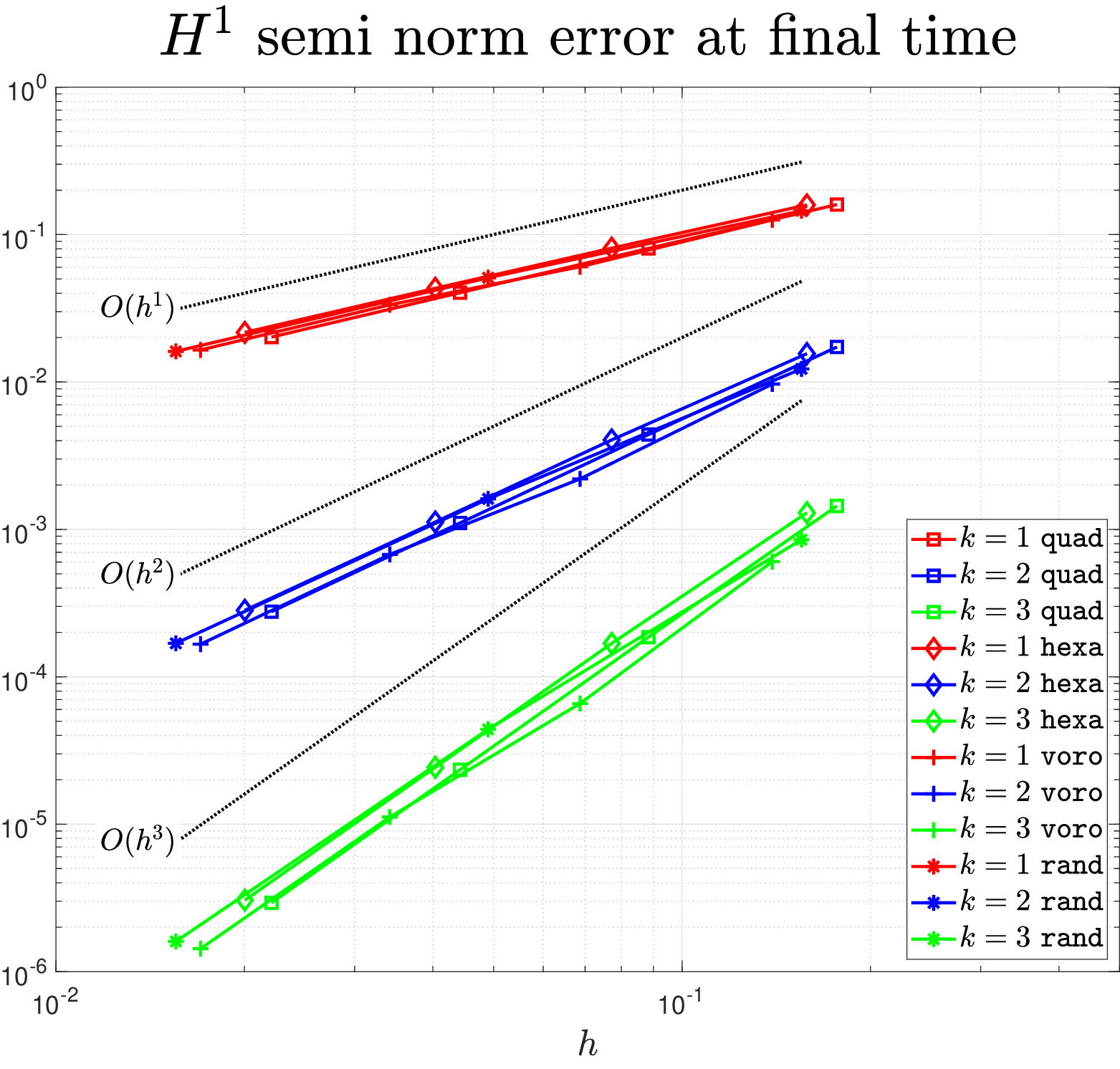}
    \end{tabular}
    \caption{Convergence analysis: convergence curves for different mesh types and polynomial degrees. 
    The error indicator \texttt{err} is shown on the left, and the $H^1$ seminorm of the error on the right.}
    \label{fig:convH}
\end{figure}

\subsubsection{\texorpdfstring{$k$}{k}-convergence}

In this numerical experiment, we analyze the convergence of the proposed scheme 
with respect to the space–time approximation degree.
To this end, we consider the coarsest space–time discretization, (\texttt{mesh1}, \texttt{inter1}), 
for each mesh type, 
and compute both the error indicator \texttt{err} and the $H^1$ seminorm error at the final time.

In Figure~\ref{fig:convK}, we report the convergence curves.
At each refinement in the polynomial degree $k$, 
we observe an improvement of approximately one order of magnitude in the error,
which is in agreement with the estimate given in Equation~\eqref{ferror}.

\begin{figure}[!htb]
     \centering
    \begin{tabular}{cc}
    \includegraphics[width=0.41\textwidth]{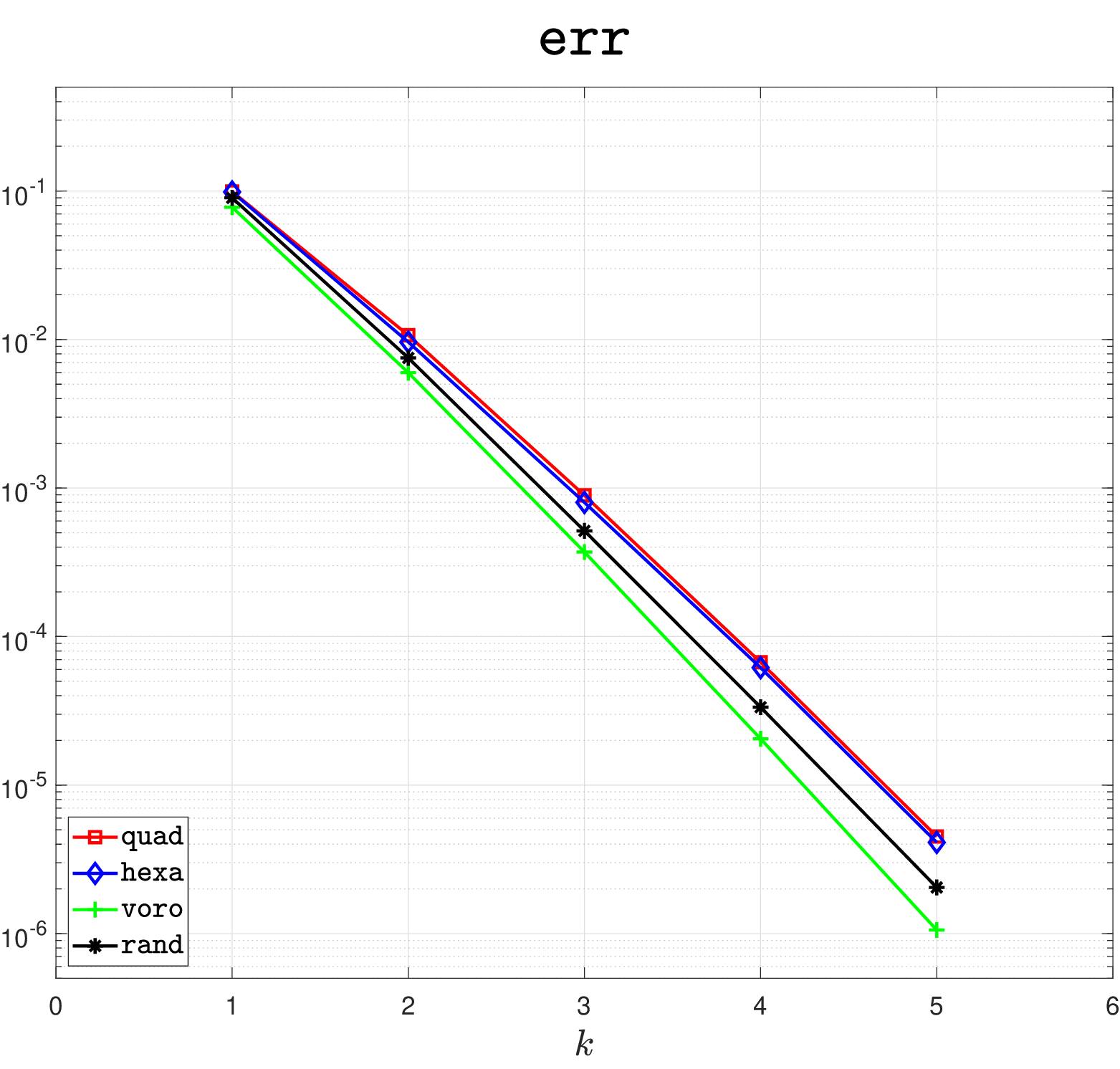} &
    \includegraphics[width=0.41\textwidth]{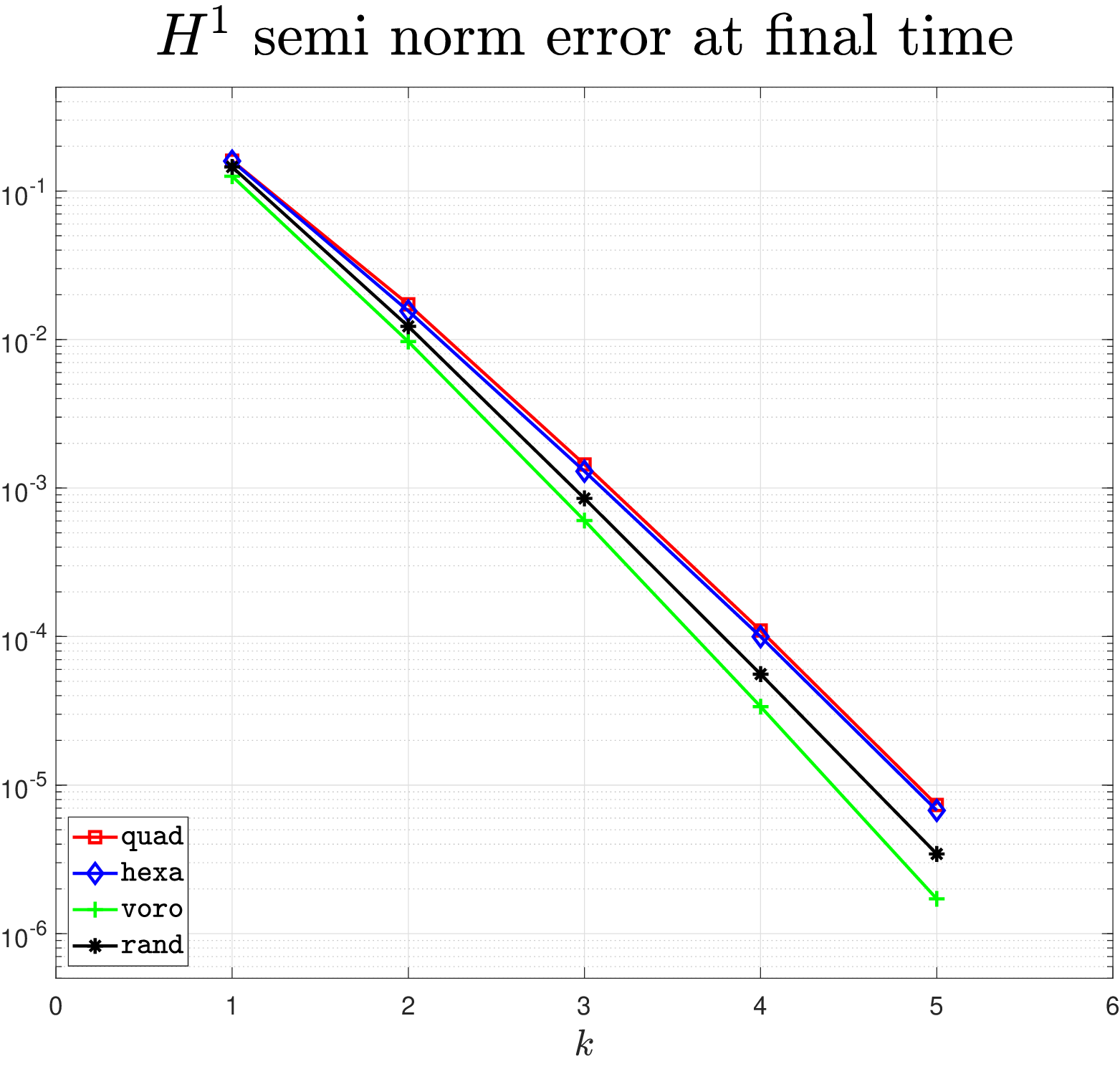}
    \end{tabular}
    \caption{$k-$convergence:
    The error indicator \texttt{err} (left) and the $H^1$ seminorm error~(right) varying mesh types.}
    \label{fig:convK}
\end{figure}

\subsubsection{Robustness with respect to the coefficient \texorpdfstring{\boldmath $D$}{D}}

In this section, we analyze the robustness of the proposed method with respect to the diffusion coefficient $D$.
We consider the same setting as before, 
but we now solve the model problem for different values of $D$:
$$
D = [10^{0},\,10^{-1},\,10^{-2},\,10^{-3},\,10^{-4},\,10^{-5},\,10^{-6},\,10^{-7}]\,,
$$
modifying the data $\tilde{c}$ and $c_I$ accordingly.
For each value of $D$, 
we solve the problem using all mesh types at the second refinement level in both space and time, i.e., 
(\texttt{mesh2}, \texttt{inter2}).

In Figure~\ref{fig:robD}, 
we observe that, while keeping the space–time discretization fixed and varying $D$, 
the error remains nearly constant.
This provides numerical evidence that the proposed method is robust with respect to the diffusion coefficient.

\begin{figure}[!htb]
    \centering
    \includegraphics[width=0.90\textwidth]{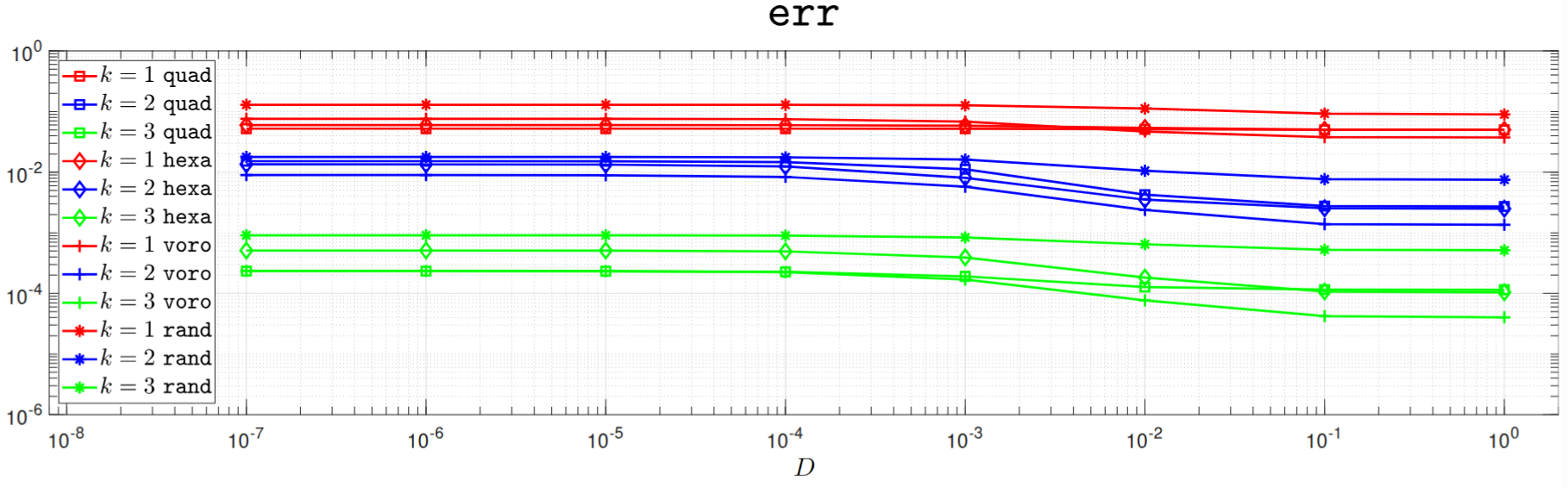}
    \caption{Robustness with respect to the coefficient $D$: Value of the error varying the parameter $D$.}
    \label{fig:robD}
\end{figure}

\subsection{Multi--species example}\label{sec:num:appl}

In this section, we apply the proposed method to a multi--species transport problem. 
The test case is inspired by the \emph{Porous Catalytic Reactor with Injection Needle} model, 
available in the open-source COMSOL Multiphysics~\cite{Comsol:2024:CMS} software and 
described in detail in~\cite{Comsol:2024:PCR}.

The reactor is a tubular structure, illustrated in Figure~\ref{fig:eseTunn}.
The lengths of its parts are proportional to a unit length denoted by~\say{u}.
The first species, \(c_1\), is injected from the \corr{left} side of the tunnel,
while the second species, \(c_2\), enters through a tubular inlet positioned on the top.
The reactor exhibits a homogeneous distribution of porosity,
we set \(\mu K^{-1} = \mathbb{I}\),
The diffusion coefficients are set to~\(D_1 = D_2 = 0.01\), 
and the final simulation time is~\(T = 20\). 
For the Darcy problem, we prescribe a constant unit flux 
at both the inflow and outflow sections of the tube, as illustrated in Figure~\ref{fig:eseTunn},
and impose a zero normal flux condition on the remaining boundaries.
For the transport equations, the inflow concentrations of \(c_1\) and \(c_2\) are defined as
\[
c_1^I(x,y) = y(y-4), \qquad c_2^I(x,y) = (x-2)(4-x)\,,
\]
respectively.

\begin{figure}[!htb]
\centering
\begin{overpic}[abs,unit=1mm,width=0.95\textwidth]{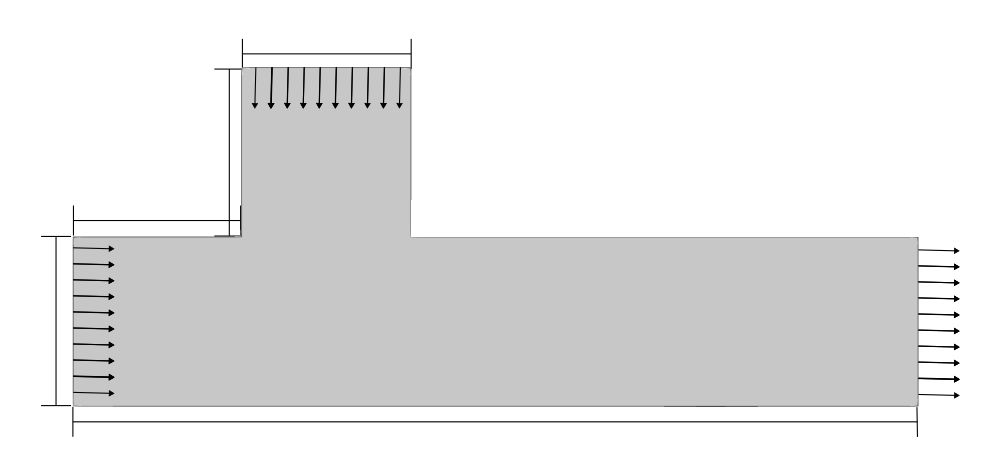}
\put(21,23){{\large \(c_1\)}}
\put(50,52){{\large \(c_2\)}}
\put(21,40){{\large 2u}}
\put(50,66){{\large 2u}}
\put(75,3){{\large 10u}}
\put(3,23){{\large 2u}}
\put(30,50){{\large 2u}}
\end{overpic}
\caption{Multi-species example: reactor tunnel}
\label{fig:eseTunn}
\end{figure}

The computational domain is discretized using a Delaunay triangulation, 
and the time interval is divided into 100 equally spaced sub-intervals.
Both the spatial and temporal polynomial degrees are set to one.

Figure~\ref{fig:flow} shows the flow field obtained from the solution of the Darcy problem,
which is consistent with the expected behavior based on the problem data.

\begin{figure}[!htb]
\centering
\includegraphics[width=0.85\linewidth]{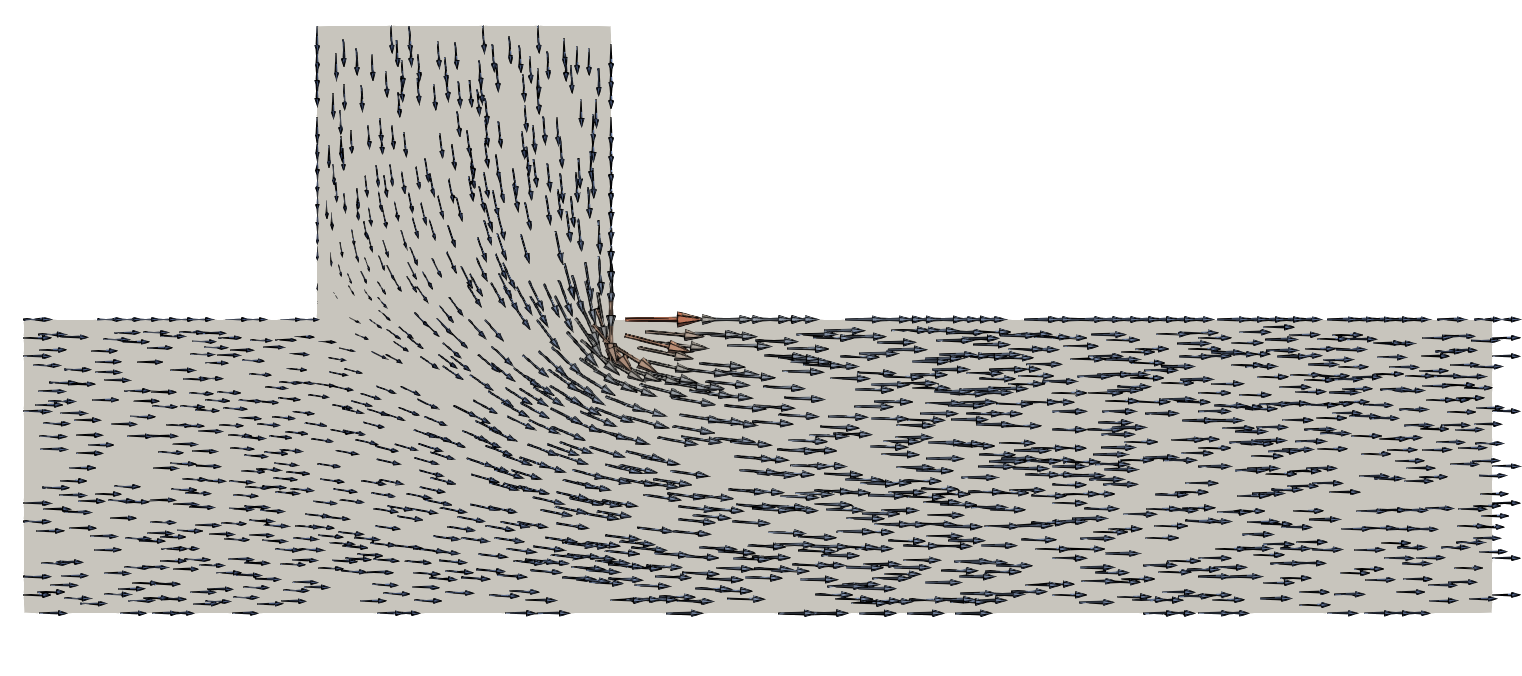}
\caption{Multi-species example: flow field computed from the resolution of the Darcy problem.}
\label{fig:flow}
\end{figure}

To analyze the impact of the parameters in the first-order reaction term,
we fix the degradation rates of the two species to \(\gamma_1 = \gamma_2 = 0.2\).
We then consider three different reaction scenarios:
\begin{itemize}
\item \texttt{noReaction}: in this case, the two species evolve independently,
and the degradation of one species does not generate the other.
This behavior is obtained by setting \(y_{1/2} = y_{2/1} = 0.0\);
\item \texttt{lowReaction}: in this case, a small amount of \(c_2\) 
is produced from the degradation of \(c_1\), and vice versa,
by setting \(y_{1/2} = y_{2/1} = 0.1\);
\item \texttt{highReaction}: here, we consider a non-symmetric case, 
with a high production of \(c_2\) from \(c_1\),
and a low production of \(c_1\) from the degradation of \(c_2\);
specifically, \(y_{1/2} = 1.0\) and \(y_{2/1} = 0.1\).
\end{itemize}

Figures~\ref{fig:speces1} and~\ref{fig:speces2} show selected time steps of the simulations
for the three considered scenarios.
The results are consistent with the chosen values of the parameters \(y_{*/\circ}\).
In both the \texttt{lowReaction} and \texttt{highReaction} cases,
a noticeable presence of \(c_2\) is observed along the flow path of \(c_1\), and vice versa.
In the \texttt{lowReaction} case, this interaction is roughly symmetric,
whereas in the \texttt{highReaction} case the degradation of \(c_1\) produces
a significantly higher concentration of \(c_2\).
As expected, in the \texttt{noReaction} case no such production is observed for either species.

\begin{figure}[!htb]
    \centering
    \begin{tabular}{ccc}
        & \(T=1.\) & \(T=2.\)\\ 
        \rotatebox{90}{\texttt{noReaction}} &
        \includegraphics[width=0.42\linewidth]{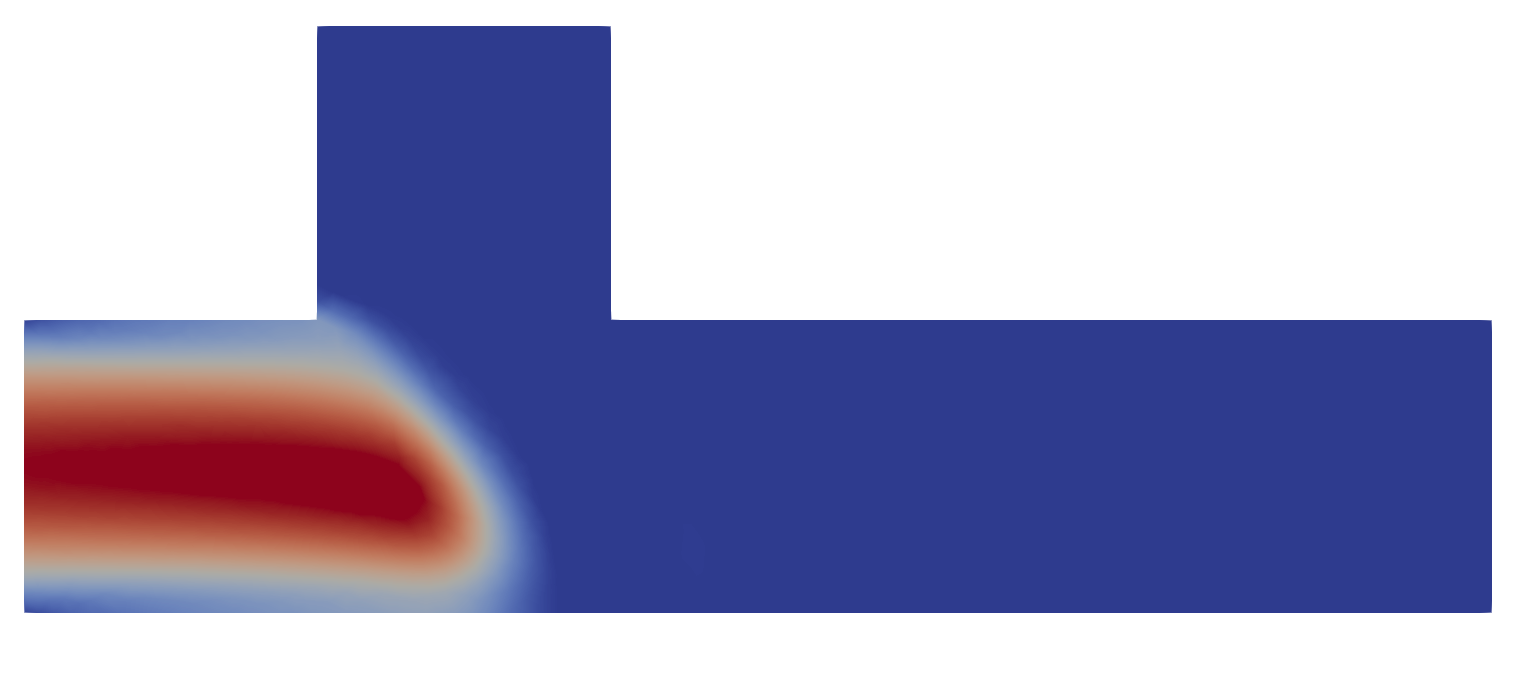} &
        \includegraphics[width=0.42\linewidth]{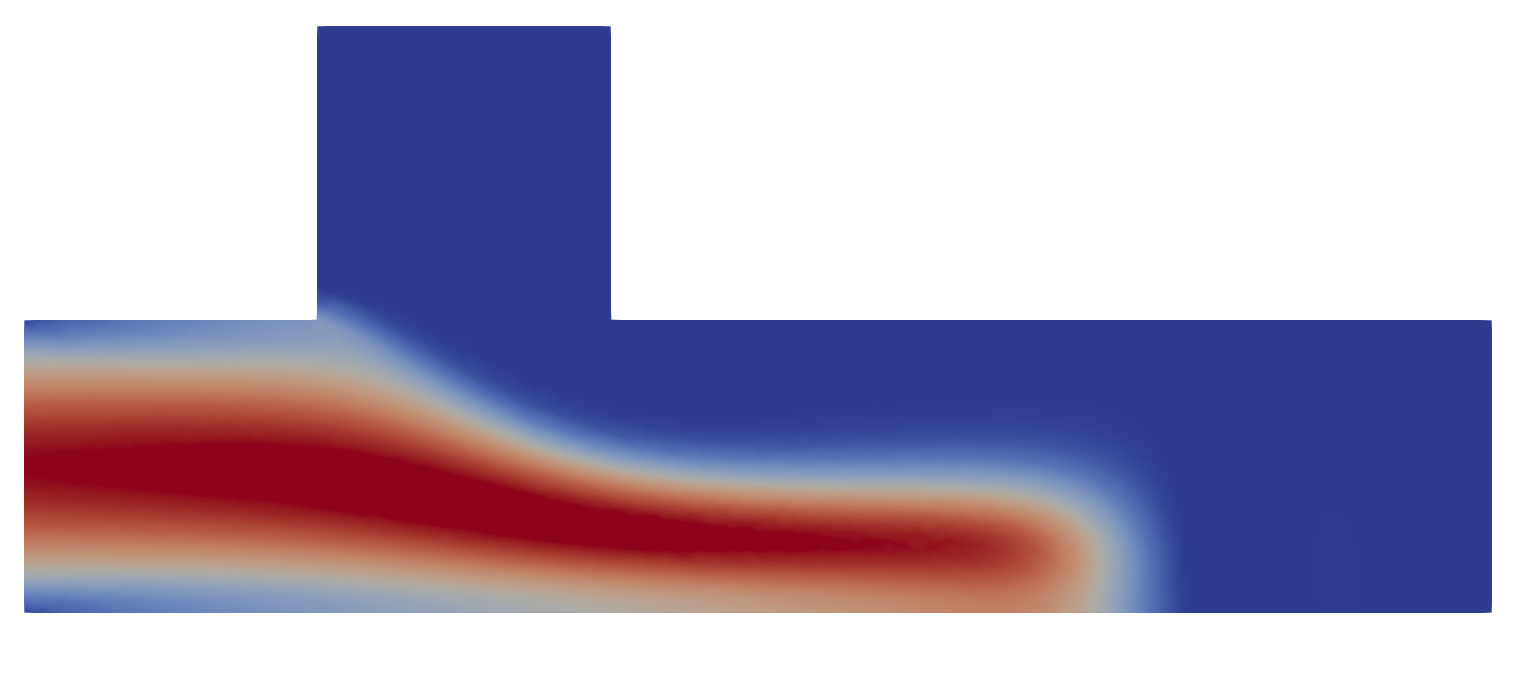}\\[1ex]
        
        \rotatebox{90}{\texttt{lowReaction}} &
        \includegraphics[width=0.42\linewidth]{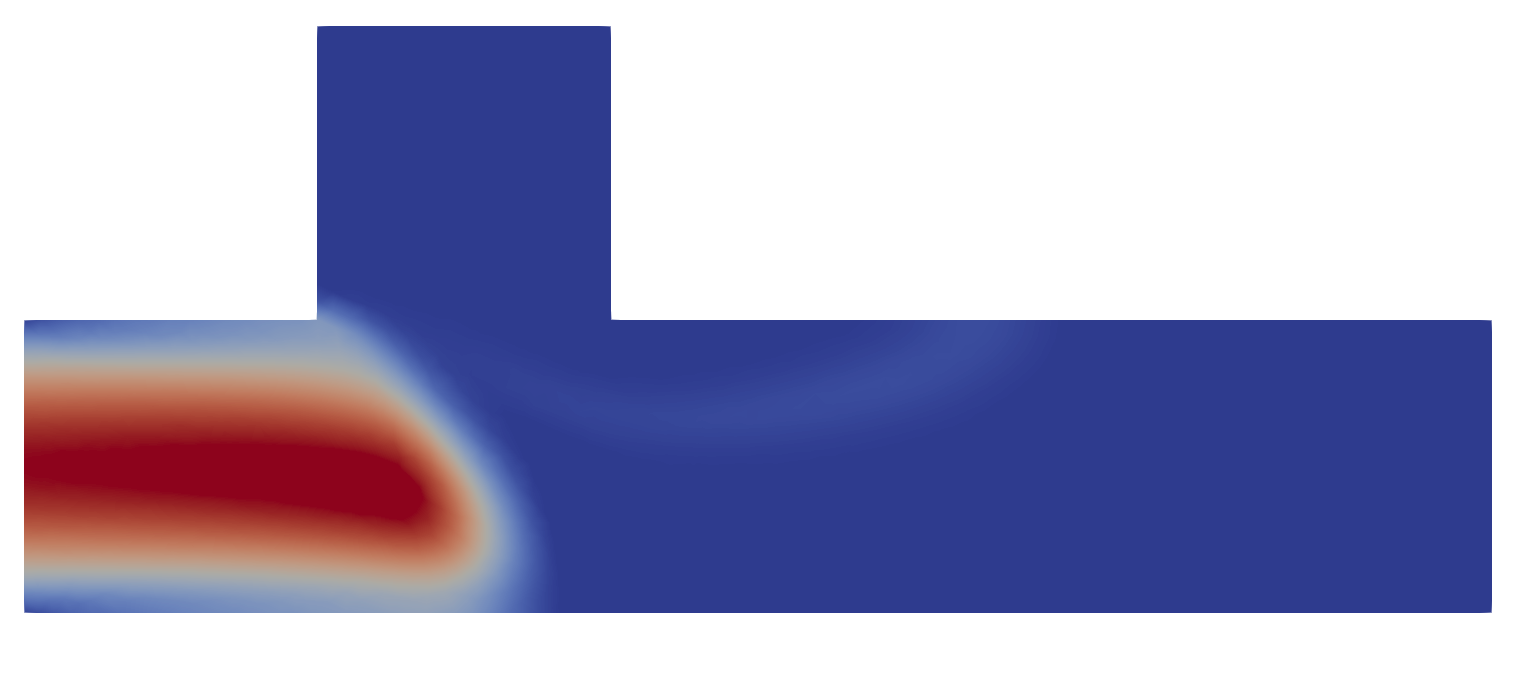} &
        \includegraphics[width=0.42\linewidth]{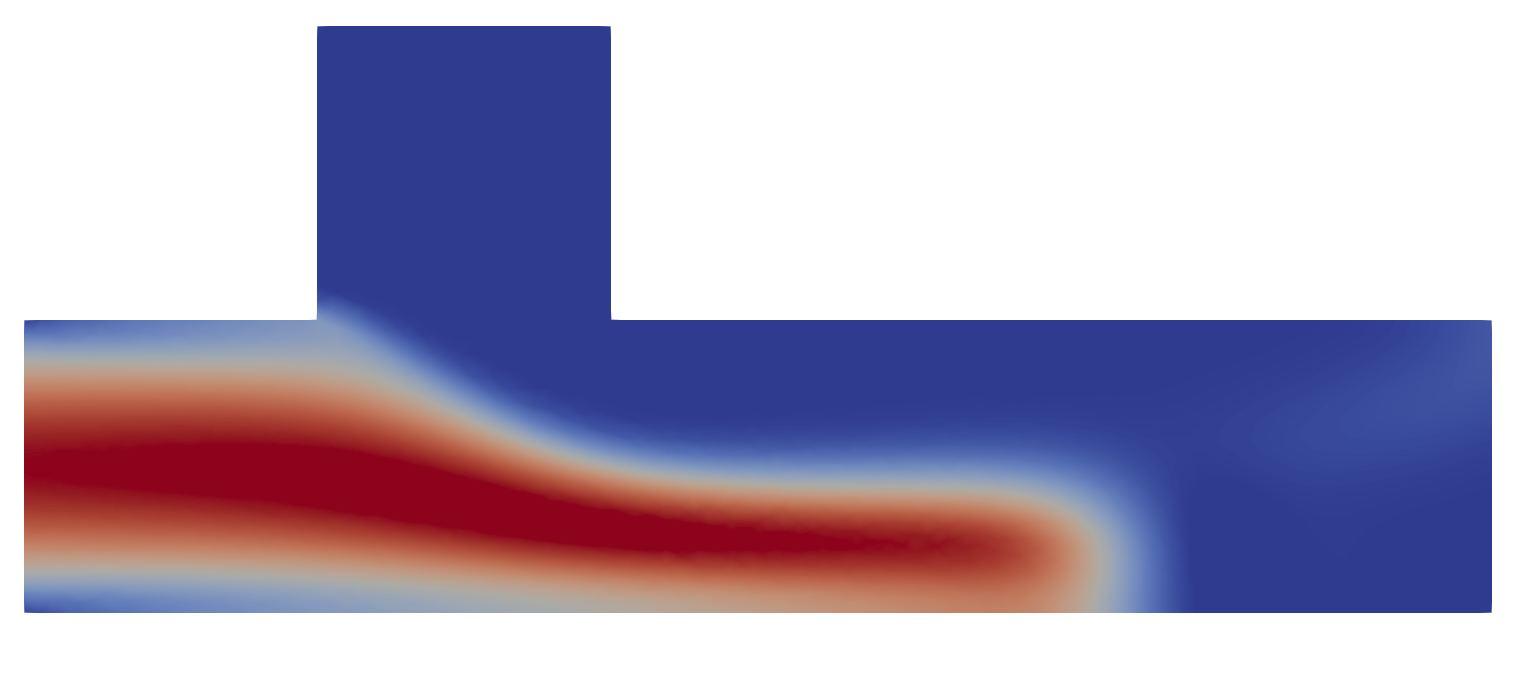}\\[1ex]
        
        \rotatebox{90}{\texttt{highReaction}} &
        \includegraphics[width=0.42\linewidth]{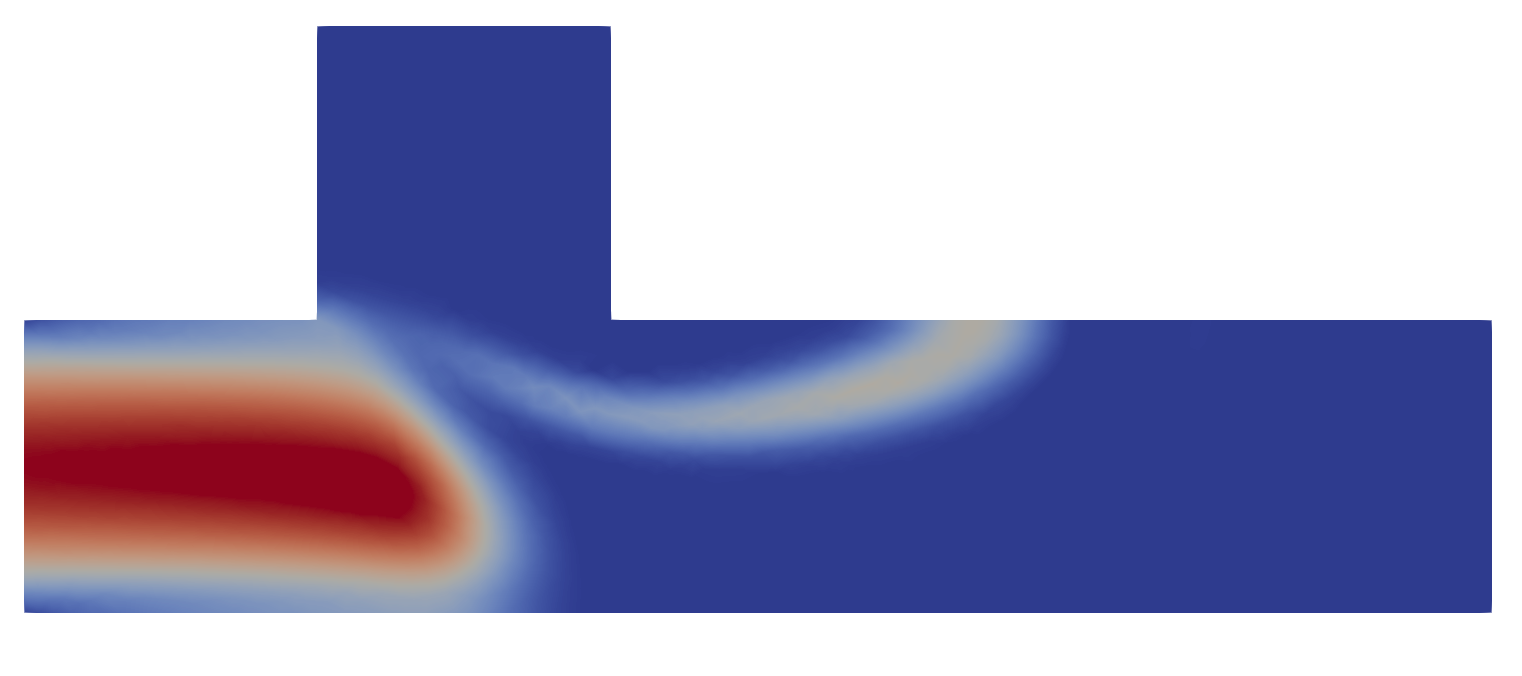} &
        \includegraphics[width=0.42\linewidth]{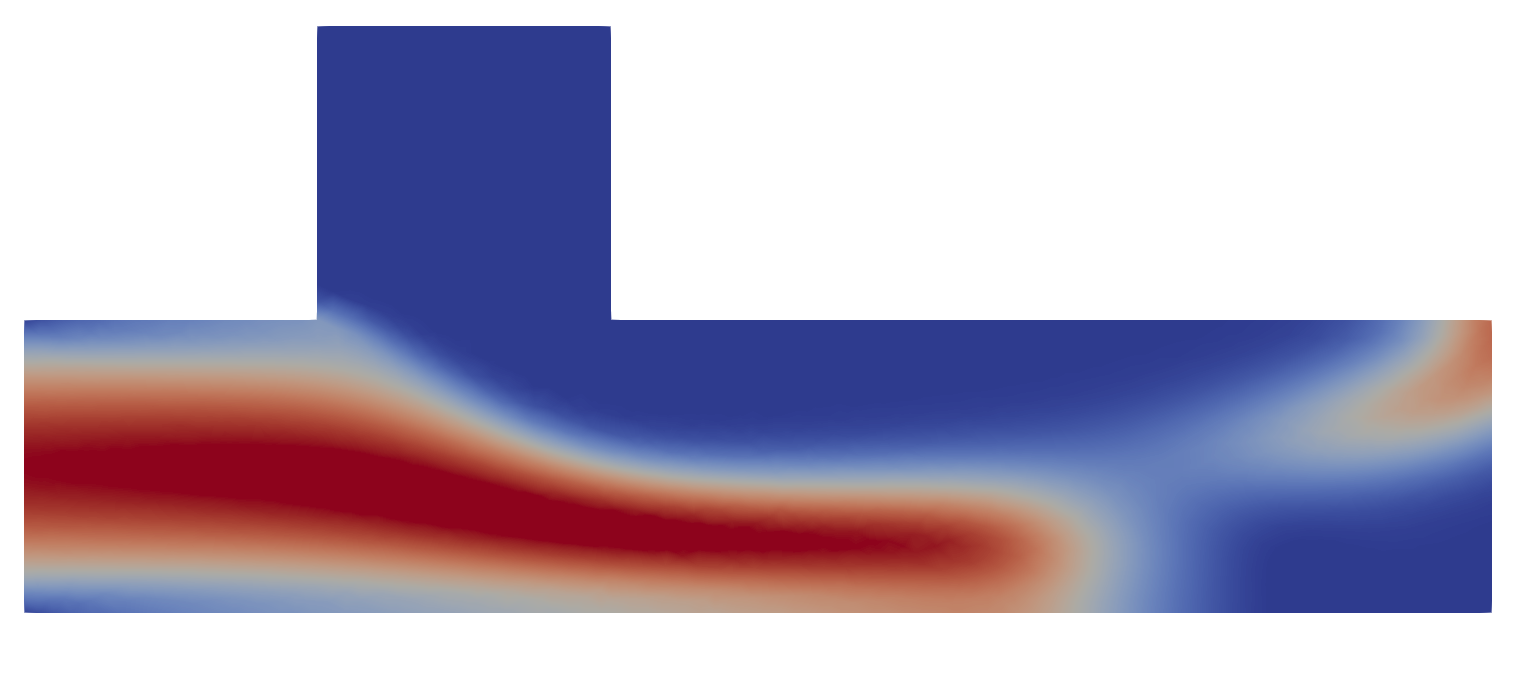}\\[1ex]
        
        & \multicolumn{2}{c}{\includegraphics[width=0.4\textwidth]{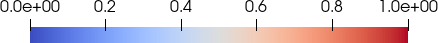}}\\
    \end{tabular}
    \caption{Multi--species example: evolution of~\(c_1\) considering different scenarios.}
    \label{fig:speces1}
\end{figure}
\begin{figure}[!htb]
    \centering
    \begin{tabular}{ccc}
        & \(T=1.\) & \(T=2.\)\\ 
        \rotatebox{90}{\texttt{noReaction}} &
        \includegraphics[width=0.42\linewidth]{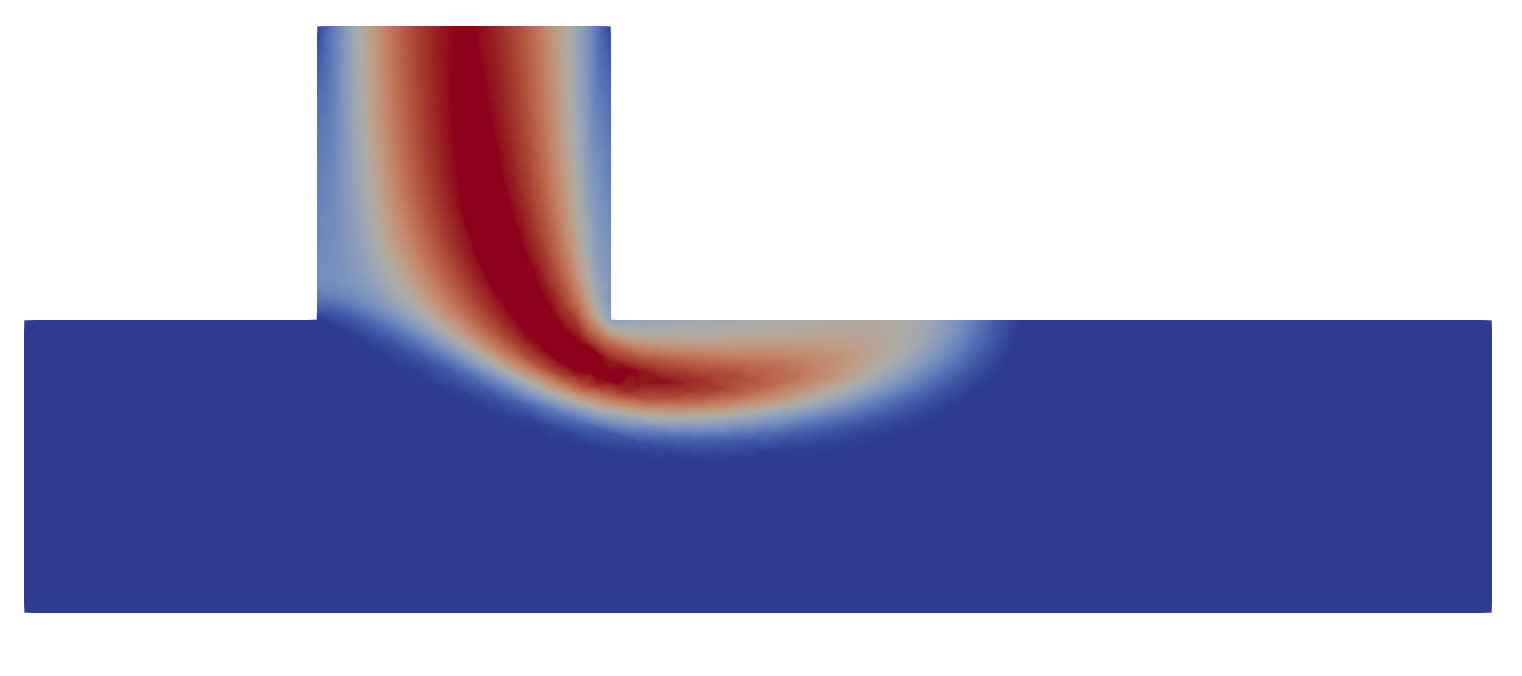} &
        \includegraphics[width=0.42\linewidth]{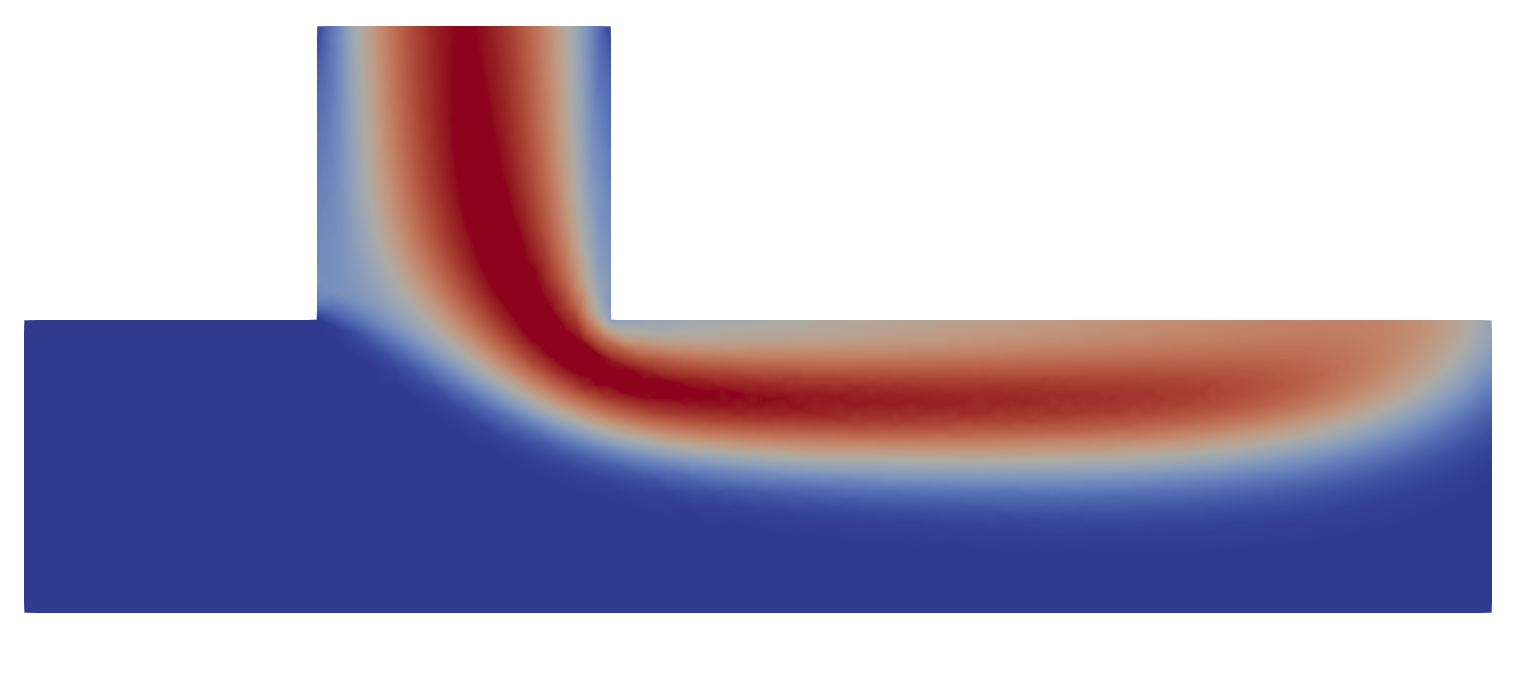}\\[1ex]
        
        \rotatebox{90}{\texttt{lowReaction}} &
        \includegraphics[width=0.42\linewidth]{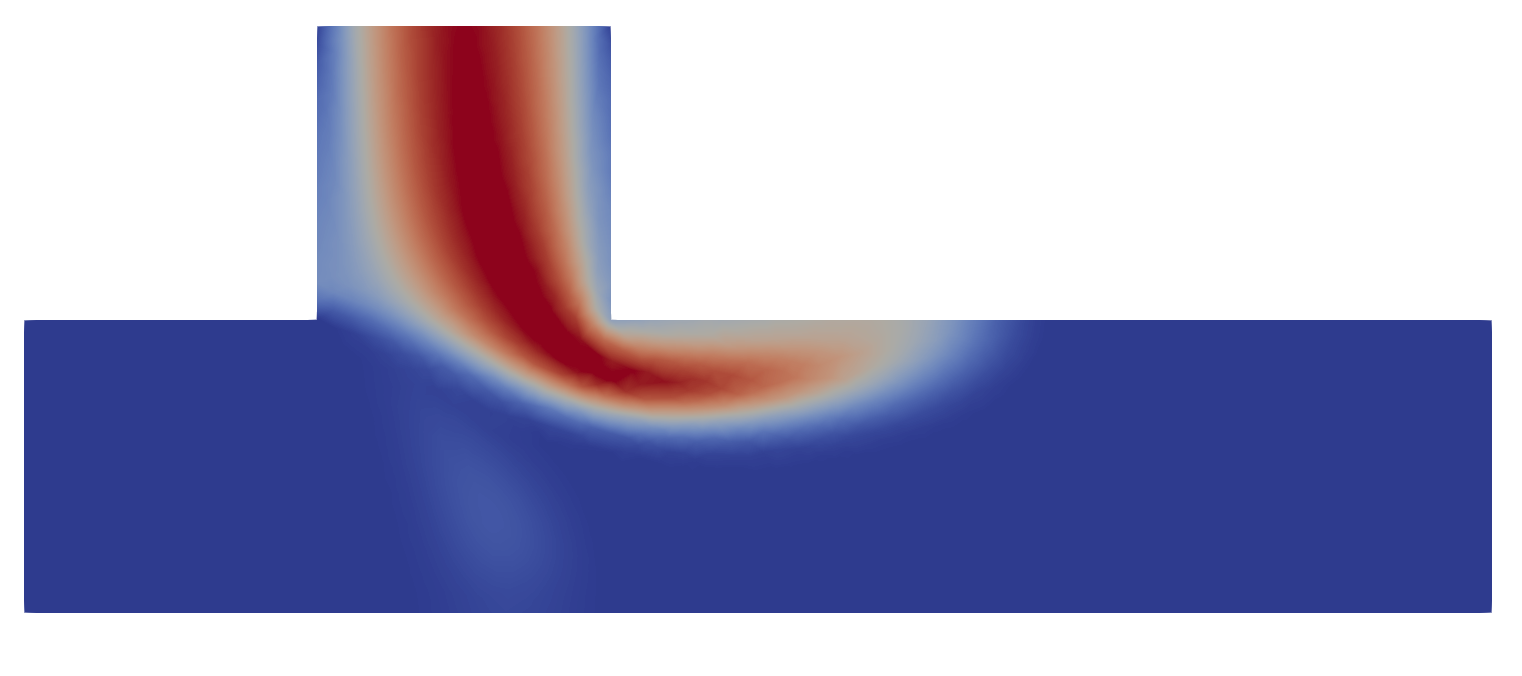} &
        \includegraphics[width=0.42\linewidth]{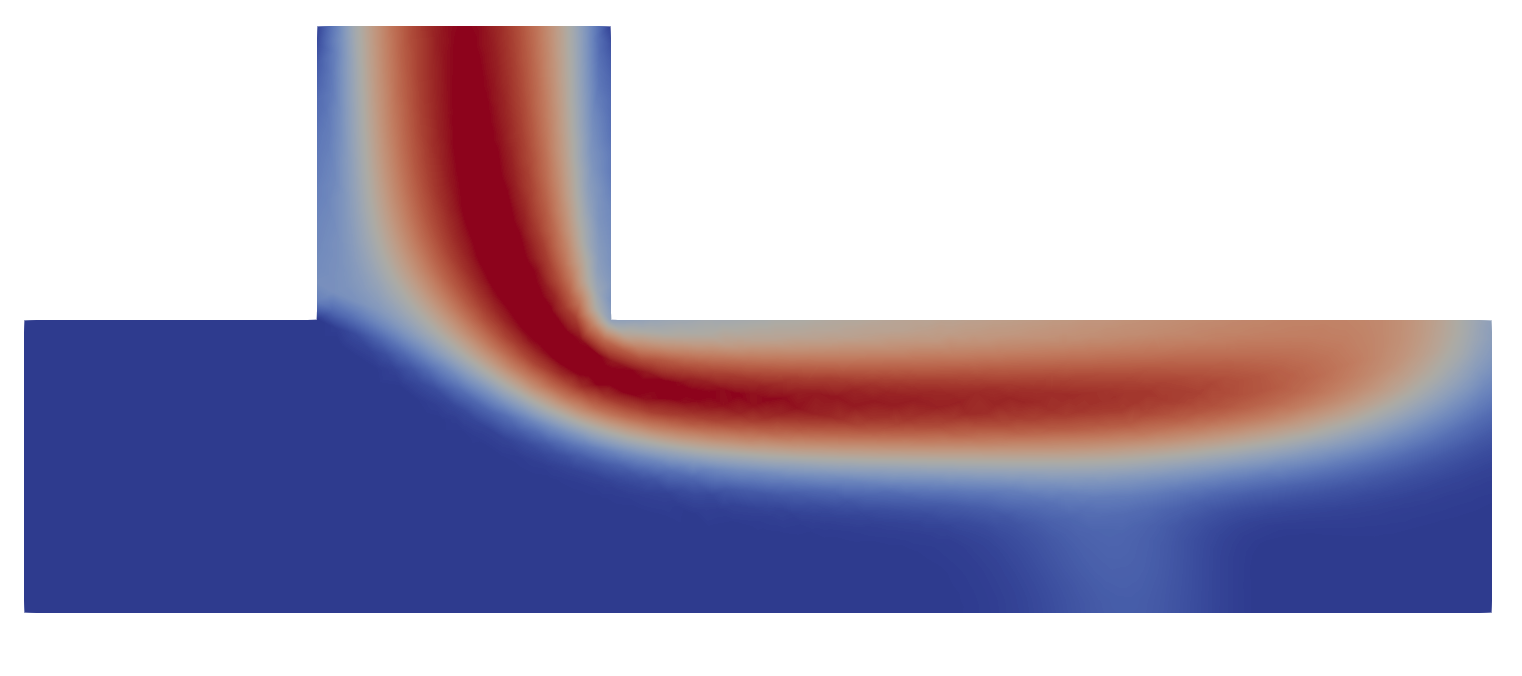}\\[1ex]
        
        \rotatebox{90}{\texttt{highReaction}} &
        \includegraphics[width=0.42\linewidth]{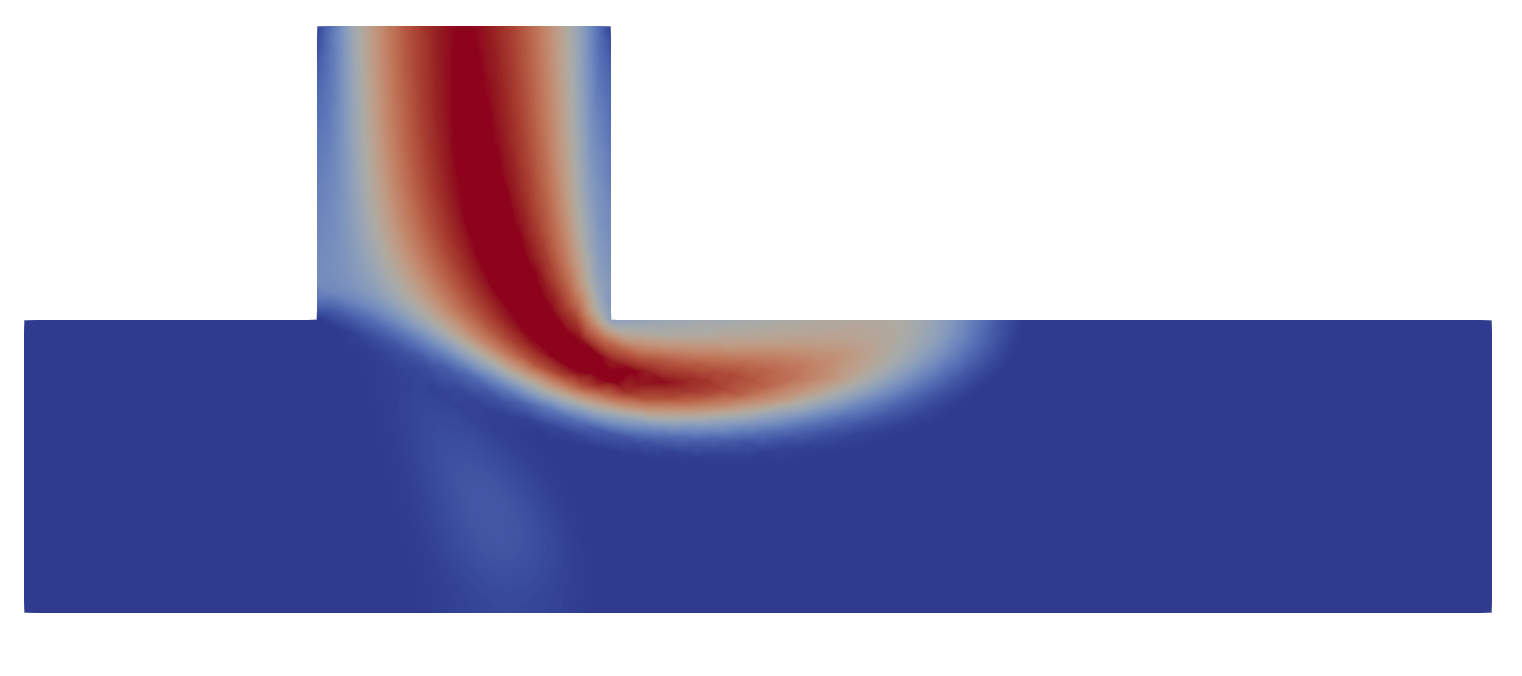} &
        \includegraphics[width=0.42\linewidth]{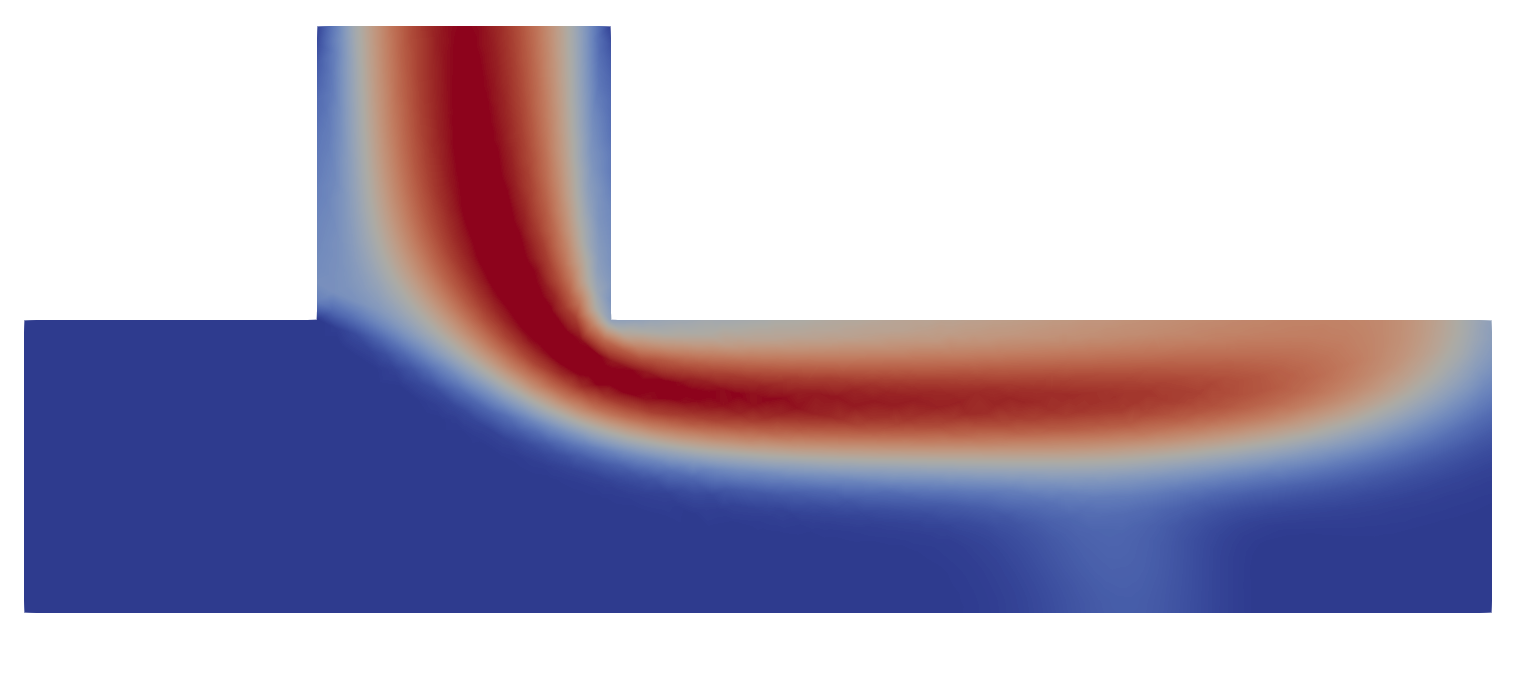}\\[1ex]
        
        & \multicolumn{2}{c}{\includegraphics[width=0.4\textwidth]{imm/legend.png}}\\
    \end{tabular}
    \caption{Multi--species example: evolution of~\(c_2\) considering different scenarios.}
    \label{fig:speces2}
\end{figure}


\section*{Acknowledgments}
All authors were partially supported by the European Union (ERC Synergy, NEMESIS, project number 101115663) and 
the Italian Ministry of University and Research (MUR) through 
the PRIN 2022 project \say{FREYA – Fault REactivation: a hYbrid numerical Approach}. 
Views and opinions \corr{expressed are, however, those} of the authors only and 
do not necessarily reflect those of the EU or the ERC Executive Agency.
The authors would also like to thank Prof.~L. Beir\~ao da Veiga and 
the anonymous referees for their valuable comments and suggestions on the manuscript. 
FD is also a member of the INdAM-GNCS group.

\bibliographystyle{plain}
\bibliography{references}

\end{document}